\documentclass[12pt]{article}
\usepackage{amsfonts}
\usepackage{amsthm}
\usepackage{amsmath}    
\usepackage{bm}         
\usepackage{graphicx}   
\usepackage{verbatim}   
\usepackage{color}      
\usepackage{subfigure}  
\usepackage{epsfig}
\usepackage{tikz}       
\usepackage{stmaryrd}
\usepackage{hyperref}   
\numberwithin{equation}{section} 
\usepackage{cleveref} 
\usepackage{booktabs}
\usepackage{caption}

\topmargin-.5in \textheight9in \oddsidemargin0in \textwidth6.5in

\newtheorem{theorem}{Theorem}[section]
\newtheorem{lemma}[theorem]{Lemma}
\newtheorem{definition}[theorem]{Definition}
\newtheorem{prop}[theorem]{Proposition}
\newtheorem{corollary}[theorem]{Corollary}
\theoremstyle{definition}
\newtheorem{remark}{Remark}

\newtheorem{problem}{Problem}

\allowdisplaybreaks

\newcommand{\norm}[1]{\left\Vert#1\right\Vert}
\newcommand{\abs}[1]{\left\vert#1\right\vert}

\newcommand{\average}[1]{\ensuremath{\{\!\!\{#1\}\!\!\}} }
\newcommand{\jump}[1]{\ensuremath{[\![#1]\!]} }

\newcommand{\refE}{K_{\text{ref}}}

\newcommand{\n}{\mathbf{n}}

\newcommand {\R}{\mathbb{R}}

\begin{document}

\title{Arbitrary Lagrangian-Eulerian discontinuous Galerkin method for  conservation laws on moving simplex meshes
}
\author{Pei Fu\footnote{ School of Mathematical Sciences, University of Science and Technology of China, Hefei, Anhui 230026, P.R. China.
  Email: sxfp2013@mail.ustc.edu.cn. },
Gero Schn\"ucke\footnote{ University of Cologne, Weyertal 86-90, 50931 K\"oln, Email: gschnuec@math.uni-koeln.de},
 Yinhua Xia\footnote{Corresponding author.
School of Mathematical Sciences, University of Science and Technology of China,
  Hefei, Anhui 230026, P.R. China.
  Email: yhxia@ustc.edu.cn. Research supported by NSFC grants No. 11871449 and No. 11471306, and a grant from the Science \& Technology on Reliability \& Environmental Engineering Laboratory (No. 6142A0502020817).
	} }
\date{ }
\maketitle

\begin{abstract}
In Klingenberg, Schn\"ucke and Xia (Math. Comp. 86 (2017), 1203-1232) an arbitrary Lagrangian-Eulerian discontinuous Galerkin (ALE-DG) method to solve conservation laws has been developed and analyzed. In this paper, the ALE-DG method will be extended to several dimensions. The method will be designed for simplex meshes. This will ensure that the method satisfies the geometric conservation law, if the accuracy of the time integrator is not less than the value of the spatial dimension. For the semi-discrete method the $\mathrm{L}^2$-stability will be proven. Furthermore, an error estimate which provides the suboptimal ($k+\frac{1}{2}$) convergence with respect to the $\mathrm{L}^{\infty}\left(0,T;\mathrm{L}^{2}\left(\Omega\right)\right)$-norm will be presented, when an arbitrary monotone flux is used and for each cell the approximating functions are given by polynomials of degree $k$. The two dimensional fully-discrete explicit method will be combined with the bound preserving limiter developed by Zhang, Xia and Shu in (J. Sci. Comput. 50 (2012), 29-62). This limiter does not affect the high order accuracy of a numerical method. Then, for the ALE-DG method revised by the limiter the validity of a discrete maximum principle will be proven. The numerical stability, robustness and accuracy of the method will be shown by a variety of two dimensional computational experiments on moving triangular meshes.
\vfill
\noindent
{\bf  Key Words: Arbitrary Lagrangian-Eulerian discontinuous Galerkin method, conservation laws, moving simplex meshes, geometric conservation law, $\mathrm{L}^2$-stability, error estimates, maximum principle.}
\end{abstract}

\section{Introduction}
The present paper investigates the development and analysis of an arbitrary Lagrangian-Eulerian discontinuous Galerkin (ALE-DG) method for scalar conservation laws in several space dimensions
\begin{align}\label{eqn:CL}
\partial_{t}u+\nabla\cdot\boldsymbol{f}\left(u\right)=0, \ \text{in }\Omega \times \left(0,T\right),
\end{align}
with initial condition $u_0({\bf x})$ and suitable boundary conditions. The domain $\Omega\subseteq\R^{d}$ is an open convex polyhedron and the flux $\boldsymbol{f}\left(u\right):=\left(f_{1}\left(u\right),...,f_{d}\left(u\right)\right)^{T}$ is a suitable vector field. In general the problem \eqref{eqn:CL} has no classical solutions. Discontinuities like shock waves could appear in the solution, regardless of the smoothness of the initial data. Hence, the problem needs to be investigated with a class of generalized solutions. The existence of an unique physical relevant solution for the problem was proven by Kru\v{z}kov in \cite{Kruzkov1970}. This solution is called entropy solution. In particular, Kru\v{z}kov proved that the unique entropy solution satisfies the maximum principle. This means the entropy solution is bounded by the interval $\left[m,M\right]$, where
\begin{equation}\label{bounds}
M:=\underset{{\bf x}\in\Omega}{\max}\,u_{0}\left({\bf x}\right)\quad\text{and}\quad m:=\underset{{\bf x}\in\Omega}{\min}\,u_{0}\left({\bf x}\right).
\end{equation}

The discontinuous Galerkin (DG) method, introduced by Reed and Hill \cite{Reed1973} in the context of a neutron transport equation, is a finite element method with discontinuous basis functions. The choice of discontinuous basis functions gives the method a local structure (elements only communicate with immediate neighbors) and the property to handle complex mesh geometries. These features of the DG method are attractive for parallel and high performance computing. Therefore, in particular, the explicit Runge-Kutta DG (RK-DG) method for convection-dominated problems developed and analyzed by Cockburn, Shu and several co-authors in a series of publications (cf. the review article \cite{Cockburn2001} for a summation of their pioneering works) became very popular in the last decades.

The RK-DG method of Cockburn and Shu was developed for a static computational mesh, but in engineering applications like aeroelastic computations of wings (cf. for instance Robinson et al. \cite{Robinson1991}) numerical methods with a deformable moving mesh are desirable. Nevertheless, a deformable computational domain can lead to strong distortions in the mesh geometry which can be the source of numerical artifacts and instabilities. In the arbitrary Lagrangian-Eulerian (ALE) approach the mesh can move with the fluid like in the Lagrangian specification or the mesh can be static as in the Eulerian specification. This flexibility has a stabilizing effect on an ALE method, since it is possible to switch to the Eulerian specification whenever distortions appear in the mesh geometry. The ALE kinematics were rigorously described by Donea et al. \cite{Donea2004}. Moreover, in the literature there are different strategies to combine the ALE approach with the RK-DG method. Among others Lomtev, Kirby, Karniadakis \cite{Lomtev1999JCP}, Nguyen \cite{Nguyen2010JFS}, Persson et al. \cite{Persson2009, Persson2015}, Kopriva et al. \cite{Kopriva2016, Minoli2011} and Boscheri, Dumbser \cite{Boscheri2017} developed and analyzed ALE-DG methods for convection-dominated problems on a moving domain.

In this paper, an ALE-DG method for solving the problem \eqref{eqn:CL} on moving simplex meshes is introduced. This method is an extension of the ALE-DG method developed by Klingenberg et al. \cite{Klingenberg2016, KlingenbergHJ2016HJ}. In order to describe the ALE kinematics, we assume that the distribution of the grid points is explicitly given for an upcoming time level by a suitable moving grid methodology. On the basis of this assumption, we can define local affine linear ALE mappings which connect the time-dependent simplex cells with a time-independent reference simplex cell. This simple construction of the ALE mappings ensures that our ALE-DG method has a local structure like the RK-DG method and the discrete geometric conservation law (D-GCL) is satisfied, when a suitable high order accurate Runge-Kutta (RK) method is used. The geometric conservation law (GCL) describes the time evolution of the metric terms in a grid deformation method and has an important influence on the stability and accuracy of a method. The significance of the GCL was first analyzed by Lombard and Thomas \cite{Lombard1979}, thenceforth the GCL was investigated in the context of moving mesh finite volume and finite element methods by Farhat et al. \cite{Farhat2001,Farhat2000,Lesoinne1996}, Mavriplis, Yang \cite{Mavriplis2006} and \'{E}tienne, Garon, Pelletier \cite{Etienne2009}.

Besides the D-GCL, a discrete maximum principle is discussed for our ALE-DG method. In general, even on a static mesh, it is not easy to design a high order method which satisfies a discrete maximum principle for the problem \eqref{eqn:CL} without affecting the high order accuracy of the method. Two approaches are commonly used in the literature. The first approach is the flux correction approach. Based on this approach Xu \cite{Xu2014} developed a technique to ensure that a high order method satisfies the maximum principle and maintains the high order accuracy. Moreover, an algebraic flux correction approach for finite element methods was introduced by Kuzmin in \cite{Kuzmin2001} and \cite[Chapter 4]{Kuzmin2010}. Another approach was developed by Zhang and Shu \cite{Zhang2010}. This approach based on a bound preserving limiter which does not affect the high order accuracy of a high order method. In particular, the bound preserving limiter was developed for rectangular meshes by Zhang, Shu \cite{Zhang2010} and for triangular meshes by Zhang, Xia, Shu \cite{Zhang2012}. This approach allows the development of high order accurate maximum principle satisfying schemes by a simple investigation of the forward Euler step, since the common convexity argument (cf. Gottlieb and Shu \cite{Gottlieb1998}) can be used to extend the result for the forward Euler step to the high order total-variation-diminishing RK (TVD-RK) methods. However, Farhat, Geuzaine and Grandmont \cite{Farhat2001} proved that for ALE finite volume methods a discrete maximum principle is satisfied, if and only if the D-GCL is satisfied. Unfortunately, for our ALE-DG method the D-GCL is only fulfillment, if the accuracy of the RK method corresponds with the spatial dimension. Hence, we cannot expect that our forward Euler ALE-DG method satisfies a discrete maximum principle, when the bound preserving limiter is applied. Nevertheless, it turns out that the GCL is an ODE in our ALE-DG method. This ODE can be solved exactly by a RK method with an order not less than the value of the spatial dimension. In two dimensions, we use the second and the third order TVD-RK methods developed by Shu in \cite{Shu} to solve the GCL and the actual ALE-DG method. Then the RK stage solutions for the GCL  are used to update the metric terms in the RK stages of the actual ALE-DG method. This time integration strategy allows to develop second and third order accurate fully-discrete ALE-DG methods. We prove that these methods satisfy a discrete maximum principle when the bound preserving limiter is applied. Furthermore, we present numerical experiments which support the expectation that the ALE-DG method also satisfies a discrete maximum principle when the five stage fourth order TVD-RK method developed by Spiteri and Ruuth in \cite{Spiteri} and the bound preserving limiter are used.

In addition, we present an a priori error estimate for our ALE-DG method. A priori error analysis to smooth solutions of the second and third order RK-DG method on a static mesh for scalar and symmetrizable systems of conservation laws were mainly done by Zhang, Shu et al. More precisely, Zhang and Shu proved in \cite{QZhang2004} and \cite{QZhang2006} that under a slightly more restrictive Courant-Friedrichs-Lewy (CFL) constraint than the commonly used constraint the a priori error of the second order RK-DG method behaves as $\mathcal{O}\left(\triangle t^{2}+h^{k+\frac{1}{2}}\right)$ in the $\mathrm{L}^{2}$-norm, when a local polynomial basis of degree $k\geq 1$ and an arbitrary monotone flux are applied. In this context the quantity $\triangle t$ denotes the time step and $h$ denotes the maximum cell length. Likewise, Zhang and Shu proved in \cite{QZhang2010} that under the usual CFL constraint the a priori error of the third order RK-DG method for scalar conservation laws behaves as $\mathcal{O}\left(\triangle t^{3}+h^{k+\frac{1}{2}}\right)$ in the $\mathrm{L}^{2}$-norm. This result was extended to symmetrizable systems of conservation laws by Luo, Shu, and Zhang \cite{Luo2015}. Furthermore, a priori error estimates for the third order RK-DG method in the context of linear scalar conservation Laws with discontinuous initial data were proven in \cite{QZhang2014}. Error estimates for fully discrete ALE-DG methods to solve linear conservation laws were proven by Zhou, Xia and Shu in \cite{Zhou2018}. In this work, we merely prove that for smooth solutions of the problem \eqref{eqn:CL} the a priori error of the semi-discrete ALE-DG method behaves as $\mathcal{O}\left(h^{k+\frac{1}{2}}\right)$, when polynomials of degree $k\geq \max\left\{ 1,\frac{d}{2}\right\}$ are used on the reference cell and an arbitrary monotone flux is applied.

The rest of the paper is organized as follows. In Section 2, we introduce the local affine linear ALE mappings, a time-dependent test function space and our semi-discrete ALE-DG method. In Section 3, we present some theoretical results for the semi-discrete ALE-DG method. In particular, the $\mathrm{L}^{2}$-stability is proven. Afterward, in Section 4, the fully-discrete ALE-DG method is investigated. We prove that the D-GCL is satisfied under certain conditions which are related to the spatial dimension. Furthermore, in two dimensions second and third order accurate fully-discrete ALE-DG methods on moving triangular meshes are presented. We prove that these methods satisfy a discrete maximum principle, when the bound preserving limiter for triangular meshes developed by Zhang, Xia, Shu in \cite{Zhang2012} is applied. In Section 5, we validate the theoretical results by some computational examples and show that the ALE-DG method is numerically stable and high order accurate. Finally, we give some concluding remarks in Section 6.

\subsubsection*{Constants and notation}
In the present paper, vectors, vector valued functions and matrices are denoted by bold letters. Scalar quantities are denoted by regular letters. The set $K\left(t\right)$ denotes a time-dependent open simplex cell in a $d$ dimensional domain with the edges $F_{K\left(t\right)}^{\nu}$, $\nu=1,\dots,d+1$. Volume integrals with respect to the open set $K\left(t\right)$ and surface integrals with respect to the edges $F_{K\left(t\right)}^{\nu}$, $\nu=1,\dots,d+1$, are denoted by the bracket notation. Hence, for all $v,w\in\mathrm{L}^{2}\left(K\left(t\right)\right)\cup \mathrm{L}^{2}\left(\partial K\left(t\right)\right)$ and $\nu=1,\dots,d+1$ the notations $\left(v,w\right)_{K\left(t\right)}:=\int_{K\left(t\right)}vw \, d\textbf{x}$, $\left\langle v,w\right\rangle _{F_{K\left(t\right)}^{\nu}}:=\int_{F_{K\left(t\right)}^{\nu}}vw\,d\boldsymbol{\Gamma}$ and $\left\langle v,w\right\rangle _{\partial K\left(t\right)}:=\sum_{\nu=1}^{d+1}\left\langle v,w\right\rangle _{F_{K\left(t\right)}^{\nu}}$ are applied. Furthermore, to avoid confusion with different constants, we denote by $C$ a positive constant, which is independent of the mesh size and the numerical solutions for the conservation law \eqref{eqn:CL}, but it may depend on the solution of the PDE and may have a different value in each occurrence.

\section{The ALE-DG discretization}
In this section, we present the semi-discrete ALE-DG discretization of the problem \eqref{eqn:CL}. At first, the ALE framework to derive the ALE-DG method in several dimensions is briefly listed. Afterward, the ALE framework is used to derive the semi-discrete ALE-DG method for solving the problem \eqref{eqn:CL}.

\subsection{The ALE-DG setting}\label{Sec:TheALE-DGSetting}
In this section, we present the time dependent cells and introduce some identities for the metric quantities to transform derivatives on a reference cell.

\subsubsection{The time-dependent simplex mesh}
We assume that there exists a regular mesh $\mathcal{T}_{(t_n)}$ of simplices at any time level $t_{n}$, $n =0,\dots,\mathcal{N}$, which covers exactly the convex polyhedron domain $\Omega$ such that
\[
\overline{\Omega}=\bigcup\left\{ \overline{K(t_{n})}\mid\ K(t_{n})\in\mathcal{T}_{(t_{n})}\right\}.
\]
The mesh topology of $\mathcal{T}_{(t_n)}$ and $\mathcal{T}_{(t_{n+1})}$ is assumed to be the same. This means:
\begin{itemize}
\item[a)] $\mathcal{T}_{(t_n)}$ and $\mathcal{T}_{(t_{n+1})}$ are simplex meshes of the domain $\Omega$.
\item[b)] $\mathcal{T}_{(t_n)}$ and $\mathcal{T}_{(t_{n+1})}$ have the same number of cells.
\item[c)] The cells of both simplex meshes are positively oriented with respect to the reference simplex
\begin{equation}\label{ReferenceCell}
\refE:=\left\{ \boldsymbol{\xi}=\left(\xi_{1},...,\xi_{d}\right)^{T}\in\R^{d}:\ \xi_{\nu}\geq0,\ \forall\nu,\ \text{and}\ \sum_{\nu=1}^{d}\xi_{\nu}\leq1\right\}.
\end{equation}
\end{itemize}		
The $d+1$ vertices of each simplex $K(t_n)\in\mathcal{T}_{(t_n)}$ are denoted by ${\bf v}_{1}^{n},\dots,{\bf v}_{d+1}^{n}$. We define for all $t\in \left[t_{n},t_{n+1}\right]$ and $\ell =1,...,d+1$ time-dependent straight lines
\begin{equation}\label{vertices}
{\bf v}_{\ell}\left(t\right):={\bf v}_{\ell}^{n}+\boldsymbol{\omega}_{K^{n},\ell}\left(t-t_{n}\right),\qquad\boldsymbol{\omega}_{K^{n},\ell}:=\frac{1}{\triangle t}\left({\bf v}_{\ell}^{n+1}-{\bf v}_{\ell}^{n}\right).
\end{equation}
These straight lines are for any $t\in \left[t_{n},t_{n+1}\right]$ the vertices of a time-dependent simplex cell given by
\begin{align}\label{cells}
\begin{split}
K\left(t\right):=&\text{int}\left(\text{conv}\left\{ {\bf v}_{1}\left(t\right),\dots,{\bf v}_{d+1}\left(t\right)\right\} \right),
\quad \partial K\left(t\right)=\bigcup_{\nu=1}^{d+1}F_{K\left(t\right)}^{\nu},  \\
& F_{K\left(t\right)}^{\nu}:=\text{conv}\left(\left\{ {\bf v}_{1}\left(t\right),\dots,{\bf v}_{d+1}\left(t\right)\right\} \setminus\left\{ {\bf v}_{\nu}\left(t\right)\right\} \right),
\end{split}
\end{align}
where $\text{int}\left(\cdot\right)$ and $\text{conv}\left(\cdot\right)$ denote the interior and the convex hull of a set. In the following, the set of all time-dependent cells $K(t)$ is denoted by $\mathcal{T}_{\left(t\right)}$.
Furthermore, for any cell $K\left(t\right)\in\mathcal{T}_{(t)}$ the diameter of the cell and the radius of the largest ball, contained in $K\left(t\right)$, are denoted by $h_{K\left(t\right)}$ as well as $\rho_{K\left(t\right)}$. Additionally, we define the following global length
\begin{equation}\label{hmax}
h:=\underset{t\in\left[0,T\right]}{\max}\underset{K\left(t\right)\in\mathcal{T}_{\left(t\right)}}{\max}h_{K\left(t\right)}.
\end{equation}
Henceforth, we assume:
\begin{enumerate}
\item[(A1)] The domain $\overline{\Omega}$ is for all  $t\in \left[0,T\right]$ exactly covered by the time-dependent cells \eqref{cells} such that $\overline{\Omega}=\bigcup_{K\left(t\right)\in\mathcal{T}_{\left(t\right)}}\overline{K\left(t\right)}$.
\item[(A2)] For all  $t\in \left[0,T\right]$ and all cells $K\left(t\right)\in\mathcal{T}_{\left(t\right)}$ $J_{K\left(t\right)}=\text{det}\left(\mathrm{{\bf A}}_{K\left(t\right)}\right)>0$.
\item[(A3)] It exists constants $\kappa>0$ and $\tau>0$, independent of $h$, such that for all  $t\in \left[0,T\right]$
 \[
h_{K\left(t\right)}\leq\kappa\rho_{K\left(t\right)}\quad\text{and}\quad h \leq\tau h_{K\left(t\right)},\quad\forall K\left(t\right)\in\mathcal{T}_{\left(t\right)}.
 \]
\end{enumerate}

\subsubsection{The ALE mapping and the grid velocity field}\label{Sec:TheALEMappingGridVelocityField}
The time-dependent simplex cells \eqref{cells} can be mapped to the time-independent reference simplex element \eqref{ReferenceCell} by the affine linear time-dependent mapping
\begin{equation}\label{mapping}
\boldsymbol{\chi}_{K\left(t\right)}:\refE\to\overline{K\left(t\right)},\quad\boldsymbol{\xi}\mapsto\boldsymbol{\chi}_{K\left(t\right)}\left(\boldsymbol{\xi},t\right):=\mathrm{{\bf A}}_{K\left(t\right)}\boldsymbol{\xi}+{\bf v}_{1}\left(t\right),
\end{equation}
where the matrix $\mathrm{{\bf A}}_{K\left(t\right)}$ is given by
\begin{equation}\label{matrix2}
\mathrm{{\bf A}}_{K\left(t\right)}:=\left({\bf v}_{2}\left(t\right)-{\bf v}_{1}\left(t\right),\dots,{\bf v}_{d+1}\left(t\right)-{\bf v}_{1}\left(t\right)\right).
\end{equation}
We note that the matrix $\mathrm{{\bf A}}_{K\left(t\right)}$ is the Jacobian matrix of the mapping $\boldsymbol{\chi}_{K\left(t\right)}$ and the corresponding determinant is
\begin{equation}\label{Jacobian}
J_{K\left(t\right)}=\text{det}\left(\mathrm{{\bf A}}_{K\left(t\right)}\right)=d!\left|K\left(t\right)\right|,
\end{equation}
where $\left|K\left(t\right)\right|$ denotes the volume of the cell $K\left(t\right)$. In particular, $J_{K\left(t\right)}$ is independent of the spatial variables and belongs to $P^{d}\left(\left[t_{n-1},t_{n}\right]\right)$. It is worth to mention that in general for non-simplicial moving meshes $J_{K\left(t\right)}$ depends on spatial and temporal variables, since the shape of the elements can change when the corners move with different speed. In Figure \ref{motion} the two dimensional situation for a triangular element and a rectangular element is illustrated. The implementation of metric quantities  which depend on spatial variables is not easy and requires caution (e.g. cf. Kopriva  \cite{Kopriva2006}).
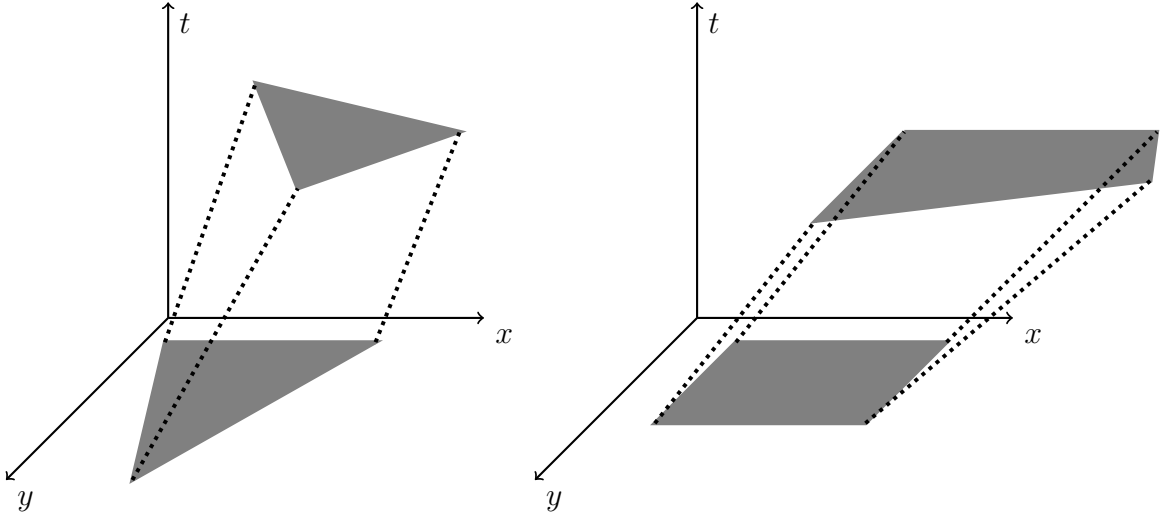
\begin{figure}
\begin{center}
\begin{tikzpicture}[scale=2.8]
\draw[fill=black,opacity=0.5,line width=1.5pt]
(0.1, 0 , 0.3) -- (0.6, 0 , 2) -- (1.1, 0, 0.3)  -- cycle;
\draw[line width=1.5pt, style=dotted]
(0.1, 0 , 0.3) -- (0.3, 1 , -0.3);
\draw[line width=1.5pt, style=dotted]
(0.6, 0 , 2) -- (1, 1 , 1);
\draw[line width=1.5pt, style=dotted]
(1.1, 0, 0.3) -- (1.5, 1, 0.3);
\draw[fill=black,opacity=0.5,line width=1.5pt]
(0.3, 1 , -0.3) -- (1, 1 , 1) -- (1.5, 1, 0.3)  -- cycle;
 \draw[thick,->] (0,0,0) -- (1.5,0,0) node[anchor = north west]{$x$};
 \draw[thick,->] (0,0,0) -- (0,1.5,0) node[anchor = north west]{$t$};
 \draw[thick,->] (0,0,0) -- (0,0,2)   node[anchor = north west]{$y$};
\end{tikzpicture}
\begin{tikzpicture}[scale=2.8]
\draw[line width=1.5pt,fill=black,opacity=0.5]
(0.3, 0 , 0.3) -- (1.3, 0 , 0.3) -- (1.3, 0, 1.3) -- (0.3, 0 , 1.3) -- (0.3, 0 , 0.3);
\draw[line width=1.5pt, style=dotted]
(0.3, 0 , 0.3) -- (1.6, 1.5 , 1.6);
\draw[line width=1.5pt, style=dotted]
(1.3, 0 , 0.3) -- (2.8, 1.5 , 1.6);
\draw[line width=1.5pt, style=dotted]
(1.3, 0, 1.3) -- (3.0, 1.5, 2.2);
\draw[line width=1.5pt, style=dotted]
(0.3, 0 , 1.3) -- (1.6, 1.5 , 2.7);
\draw[fill=black,opacity=0.5,line width=1.5pt]
(1.6, 1.5 , 1.6) -- (2.8, 1.5 , 1.6) -- (3.0, 1.5, 2.2) -- (1.6, 1.5 , 2.7) -- (1.6, 1.5 , 1.6);
 \draw[thick,->] (0,0,0) -- (1.5,0,0) node[anchor = north west]{$x$};
 \draw[thick,->] (0,0,0) -- (0,1.5,0) node[anchor = north west]{$t$};
 \draw[thick,->] (0,0,0) -- (0,0,2)   node[anchor = north west]{$y$};
\end{tikzpicture}
\caption{\label{motion} \textbf{Left:} The vertices of a triangle element at the current time level move to the vertices of a triangle element at the next time level.
\textbf{Right:} The corners of a rectangular element at the current time level move to the corners of a trapezoid element at the next time level.}
\end{center}
\end{figure}

Since the matrix ${\bf A}_{K\left(t\right)}$ is the Jacobian matrix of the mapping \eqref{mapping}, 
we have the following metric transformations
\begin{equation}\label{Piola}
\nabla\cdot\boldsymbol{f}=\nabla_{\boldsymbol{\xi}}\cdot\left[\left({\bf A}_{K\left(t\right)}^{-1}\right)\boldsymbol{f}^{*}\right],  \qquad
\nabla u={\bf A}_{K\left(t\right)}^{-T}\nabla_{\boldsymbol{\xi}}u^{*},
\qquad \n_{K(t)}
=J_{K\left(t\right)}{\bf A}_{K\left(t\right)}^{-T}\n_{\refE},
\end{equation}
where $\boldsymbol{f}:\R\to\R^{d}$ is an arbitrary vector field  with $\boldsymbol{f}^{*}=\boldsymbol{f}\circ\boldsymbol{\chi}_{K\left(t\right)}$, $u$ is a scalar function with $u^{*}=u\circ\boldsymbol{\chi}_{K\left(t\right)}$, $\n_{K(t)}$ is the normal of the cell $K\left(t\right)$ and $\n_{\refE}$ is the reference normal. A proof of these metric transformations is given in Ciarlet \cite[p. 461]{CiarletNF}. Moreover, the mapping \eqref{mapping} provides the grid velocity field in the point $\textbf{x}=\boldsymbol{\chi}_{K\left(t\right)}\left(\boldsymbol{\xi},t\right)$
\begin{equation}\label{gridvelocity}
\boldsymbol{\omega}_{K\left(t\right)}\left({\bf x},t\right):=\frac{d}{dt}\Big(\boldsymbol{\chi}_{K\left(t\right)}\left(\boldsymbol{\xi},t\right)\Big).
\end{equation}
The definition \eqref{gridvelocity} provides the following relation
\begin{equation}\label{gridvelocity1}
\partial_{\xi_{i}}\omega_{j}\left(\boldsymbol{\chi}_{K\left(t\right)}\left(\boldsymbol{\xi},t\right)\Big),t\right)=\left[\frac{d}{dt}\left(\mathrm{{\bf A}}_{K\left(t\right)}\right)\right]_{ji},\qquad i,j=1,\dots,d,
\end{equation}
where $\omega_{j}$ are the coefficients of the grid velocity and $\left[\frac{d}{dt}\left(\mathrm{{\bf A}}_{K\left(t\right)}\right)\right]_{ji}$ are the coefficients of the matrix $\frac{d}{dt}\left(\mathrm{{\bf A}}_{K\left(t\right)}\right)$. We are also interested to find an identity for the time derivative of the determinant $J_{K\left(t\right)}$. Hence, we apply Jacobi's formula (cf. Bellman \cite{Bellman}), using the equation relating the adjugate of $\mathrm{{\bf A}}_{K\left(t\right)}$ to the inverse $\mathrm{{\bf A}}_{K\left(t\right)}^{-1}$ and apply the identity \eqref{gridvelocity1}. This results in the identity
\begin{align}\label{TimeDerivativeJacobian1}
\begin{split}
\frac{d}{dt}\left(J_{K\left(t\right)}\right)
=& \text{tr}\left[\text{adj}\left(\mathrm{{\bf A}}_{K\left(t\right)}\right)\frac{d}{dt}\left(\mathrm{{\bf A}}_{K\left(t\right)}\right)\right] \\
=&\sum_{i=1}^{d}\sum_{j=1}^{d}\left(-1\right)^{i+j}M_{K\left(t\right)}^{ij}\left[\frac{d}{dt}\left(\mathrm{{\bf A}}_{K\left(t\right)}\right)\right]_{ji}  \\
=&\left(\sum_{i=1}^{d}\sum_{j=1}^{d}\left[\mathrm{{\bf A}}_{K\left(t\right)}^{-1}\right]_{ij}\left(\partial_{\xi_{i}}\omega_{j}\right)\right)J_{K\left(t\right)} \\
=&\left(\sum_{i=1}^{d}\left[\mathrm{{\bf A}}_{K\left(t\right)}^{-1}\left(\partial_{\xi_{i}}\boldsymbol{\omega}\right)\right]_{i}\right)J_{K\left(t\right)},
\end{split}
\end{align}
where $\text{tr}\left(\cdot\right)$ denotes the trace of a matrix, $\text{adj}\left(\cdot\right)$ denotes the adjoint of a matrix and $M_{K\left(t\right)}^{ij}$  is the $(i,j)$ minor of $\mathrm{{\bf A}}_{K\left(t\right)}$. Moreover, since the matrix $\mathrm{{\bf A}}_{K(t)}$ does not depend on spatial variables, we obtain
\begin{equation}\label{TimeDerivativeJacobian}
\frac{d}{dt}\left(J_{K\left(t\right)}\right)=\left(\sum_{i=1}^{d}\left[\mathrm{{\bf A}}_{K\left(t\right)}^{-1}\left(\partial_{\xi_{i}}\boldsymbol{\omega}\right)\right]_{i}\right)J_{K\left(t\right)}=\left(\nabla_{\boldsymbol{\xi}}\cdot\left[\mathrm{{\bf A}}_{K\left(t\right)}^{-1}\boldsymbol{\omega}\right]\right)J_{K\left(t\right)}.
\end{equation}
Finally, we summarize some properties of the grid velocity. These properties will be used in the next sections.
\begin{lemma}\label{PropertiesGridVelocity} The grid velocity $\boldsymbol{\omega}$ defined by \eqref{gridvelocity} has the properties:
\begin{itemize}
\item[(i)] For all $t\in\left[t_{n},t_{n+1}\right]$ the grid velocity belongs to the space  $P^{1}\left(\refE,\R^{d}\right)$.
\item[(ii)] The grid velocity is time-independent for all points contained in the set $\partial{\refE}$.
\item[(iii)] The divergence of the grid velocity satisfies
\begin{equation}\label{DivergenceGridvelocity}
\left(\nabla\cdot\boldsymbol{\omega}\right)J_{K\left(t\right)}=\left(\nabla_{\boldsymbol{\xi}}\cdot\left[\mathrm{{\bf A}}_{K\left(t\right)}^{-1}\boldsymbol{\omega}\right]\right)J_{K\left(t\right)}\in P^{d-1}\left(\left[t_{n-1},t_{n}\right]\right).
\end{equation}
\end{itemize}
\end{lemma}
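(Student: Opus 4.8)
The plan is to dispose of the three claims in turn; all of them will follow from an explicit barycentric representation of the pullback of the grid velocity to the reference cell, combined with the identities \eqref{Piola} and \eqref{TimeDerivativeJacobian} derived above.

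First I would rewrite the mapping \eqref{mapping} in barycentric form. Because the columns of $\mathbf{A}_{K(t)}$ are the edge vectors $\mathbf{v}_{\ell}(t)-\mathbf{v}_{1}(t)$, a short rearrangement of \eqref{mapping}--\eqref{matrix2} gives $\boldsymbol{\chi}_{K(t)}(\boldsymbol{\xi},t)=\sum_{\ell=1}^{d+1}\lambda_{\ell}(\boldsymbol{\xi})\mathbf{v}_{\ell}(t)$, where $\lambda_{1}(\boldsymbol{\xi})=1-\sum_{\nu=1}^{d}\xi_{\nu}$ and $\lambda_{\ell}(\boldsymbol{\xi})=\xi_{\ell-1}$ for $\ell=2,\dots,d+1$ are the affine linear, time-independent barycentric coordinates. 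Differentiating in $t$ and using that the vertex trajectories \eqref{vertices} are straight lines with constant velocities $\boldsymbol{\omega}_{K^{n},\ell}$, I obtain
\[
\boldsymbol{\omega}\bigl(\boldsymbol{\chi}_{K(t)}(\boldsymbol{\xi},t),t\bigr)=\frac{d}{dt}\boldsymbol{\chi}_{K(t)}(\boldsymbol{\xi},t)=\sum_{\ell=1}^{d+1}\lambda_{\ell}(\boldsymbol{\xi})\,\boldsymbol{\omega}_{K^{n},\ell}.
\]
This single formula settles (i) and (ii) simultaneously: the right-hand side is affine linear in $\boldsymbol{\xi}$, so the grid velocity lies in $P^{1}(\refE,\R^{d})$, and it carries no dependence on $t$, so the grid velocity, viewed as a function on the reference cell, is in fact time-independent on all of $\refE$ and in particular on $\partial\refE$.

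For (iii) the first equality is the Piola-type metric identity \eqref{Piola} (first relation) applied to the vector field $\boldsymbol{\omega}$ and multiplied by $J_{K(t)}$ (admissible because $\mathbf{A}_{K(t)}^{-1}$ is independent of the spatial variables on a simplex, so it may be pulled out of the $\boldsymbol{\xi}$-divergence). The genuine content is the degree bound, for which I would invoke \eqref{TimeDerivativeJacobian}, already stating that $(\nabla_{\boldsymbol{\xi}}\cdot[\mathbf{A}_{K(t)}^{-1}\boldsymbol{\omega}])J_{K(t)}=\frac{d}{dt}(J_{K(t)})$. Since $J_{K(t)}=\det(\mathbf{A}_{K(t)})$ and every entry of $\mathbf{A}_{K(t)}$ is affine linear in $t$ by \eqref{vertices}--\eqref{matrix2}, the determinant is a polynomial in $t$ of degree at most $d$, that is $J_{K(t)}\in P^{d}$, as recorded after \eqref{Jacobian}. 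Differentiating a polynomial of degree at most $d$ lowers its degree by one, whence $(\nabla\cdot\boldsymbol{\omega})J_{K(t)}=\frac{d}{dt}(J_{K(t)})\in P^{d-1}([t_{n-1},t_{n}])$.

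I do not expect a serious obstacle here, since the lemma is essentially an assembly of identities established earlier in the section. The only places requiring care are bookkeeping: keeping the factor $J_{K(t)}$ consistent through the Piola identity in (iii), and noting that for the degree count only the upper bound $\deg_{t}\det(\mathbf{A}_{K(t)})\leq d$ is needed. The one genuinely clarifying step is the barycentric rewriting, which makes transparent why the pullback of the grid velocity is simultaneously affine in the reference variable and constant in time.
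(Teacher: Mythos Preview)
Your argument is correct and close in spirit to the paper's, but it is organised differently and in places yields a bit more. The paper proves (i) by observing that $\frac{d}{dt}\mathbf{A}_{K(t)}$ is time-independent, and (ii) by a bare appeal to the definitions \eqref{vertices}, \eqref{cells}, \eqref{gridvelocity}; your barycentric rewriting $\boldsymbol{\omega}\circ\boldsymbol{\chi}_{K(t)}=\sum_{\ell}\lambda_{\ell}(\boldsymbol{\xi})\,\boldsymbol{\omega}_{K^{n},\ell}$ makes both claims simultaneous and transparent, and in fact proves the stronger statement that the pullback of the grid velocity is time-independent on all of $\refE$, not just on $\partial\refE$. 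For (iii) the paper argues via the expansion \eqref{TimeDerivativeJacobian1}: the entries of $\frac{d}{dt}\mathbf{A}_{K(t)}$ are constant in $t$ and the $(d-1)\times(d-1)$ minors $M_{K(t)}^{ij}$ lie in $P^{d-1}$, so the trace formula gives the degree bound directly. You instead invoke \eqref{TimeDerivativeJacobian} to identify the quantity with $\frac{d}{dt}J_{K(t)}$ and then differentiate the degree-$d$ determinant; this is an equivalent and arguably cleaner route, since the fact $J_{K(t)}\in P^{d}$ is already recorded after \eqref{Jacobian}.
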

\begin{proof}
Since the matrix $\frac{d}{dt}\left(\mathrm{{\bf A}}_{K\left(t\right)}\right)$ is time independent, the property \textit{(i)} follows  from the definition of the grid velocity \eqref{gridvelocity}.

The property \textit{(ii)} follows directly from the definitions of the time-dependent straight lines \eqref{vertices}, the cells \eqref{cells} and the grid velocity \eqref{gridvelocity}.

Finally, the property \textit{(iii)} follows from the metric transformations \eqref{Piola} and the identity \eqref{TimeDerivativeJacobian}, since $\left[\frac{d}{dt}\left(\mathrm{{\bf A}}_{K\left(t\right)}\right)\right]_{ij}$ is time independent and the minors $M_{K\left(t\right)}^{ij}$, $i,j=1,\dots,d$, belong to the space $P^{d-1}\left(\left[t_{n-1},t_{n}\right]\right)$.
\end{proof}

\subsection{The approximation space}
We define the approximation space
\begin{equation}\label{testfunctions}
\mathcal{V}_{h}(t):=\left\{ v\in\mathrm{L}^{2}\left(\Omega\right):\ v\circ\boldsymbol{\chi}_{K\left(t\right)}\in P^{k}\left(\refE\right),\ \forall K\left(t\right)\in\mathcal{T}_{\left(t\right)}\right\} ,
\end{equation}
where $P^{k}\left(\refE\right)$ denotes the space of polynomials in $\refE$ of degree at most $k$. The functions from the space $\mathcal{V}_{h}(t)$ are discontinuous along the interface of two adjacent cells. Thus, we define for a function $v\in\mathcal{V}_{h}(t)$, an arbitrary cell $K\left(t\right)\in\mathcal{T}_{\left(t\right)}$ and all $\nu =1,...,d+1$ the following limits
\begin{equation*}\label{trace}
v^{\text{int}_{K\left(t\right)}}\left({\bf x}\right):=\underset{\varepsilon\to0^{+}}{\lim}\,v\left({\bf x}-\varepsilon\mathbf{n}_{K\left(t\right)}^{\nu}\right),
\quad
v^{\text{ext}_{K\left(t\right)}}\left({\bf x}\right):=\underset{\varepsilon\to0^{+}}{\lim}\,v\left({\bf x}+\varepsilon\mathbf{n}_{K\left(t\right)}^{\nu}\right),
\quad\forall{\bf x}\in F_{K\left(t\right)}^{\nu},
\end{equation*}
where the vector $\n_{K\left(t\right)}^{\nu}$, $\nu=1,...,d+1$, is the outward normal of the cell $K\left(t\right)$ with respect to the simplex face $F_{K\left(t\right)}^{\nu}$. Then, the cell average and jump of the function $v$ along the simplex face $F_{K\left(t\right)}^{\nu}$ are defined by
\begin{equation*}\label{trace1}
\average{v}:=\frac{1}{2}\left(v^{\text{int}_{K\left(t\right)}}+v^{\text{ext}_{K\left(t\right)}}\right),
\quad
\jump{v}:=v^{\text{ext}_{K\left(t\right)}}-v^{\text{int}_{K\left(t\right)}}.
\end{equation*}

\subsection{The semi-discrete ALE-DG method}
For each cell $K(t) \in \mathcal{T}_{\left(t\right)}$, we approximate the solution $u$ of the problem \eqref{eqn:CL} by the function
\begin{equation}\label{movinggridaproxsolution}
u_{h}\left({\bf x},t\right)=\sum_{j=1}^{r}u_{j}^{K\left(t\right)}\left(t\right)\phi_{j}^{K\left(t\right)}\left({\bf x},t\right),\quad\text{for all }t\in\left[t_{n},t_{n+1}\right)\text{ and }{\bf x}\in K\left(t\right),
\end{equation}
where $r:=\frac{\left(k+d\right)!}{d!k!}$ and $\left\{ \phi_{1}^{K(t)}\left({\bf x},t\right),...,\phi_{r}^{K(t)}\left({\bf x},t\right)\right\} $ is a basis of the space $\mathcal{V}_{h}\left(t\right)$ in the cell $K(t)$.
The coefficients $u_{1}^{K(t)}\!\left(t\right)$,...,$u_{r}^{K(t)}\!\left(t\right)$ in \eqref{movinggridaproxsolution} are the unknowns of the method. In order to determine these coefficients, we plug the function \eqref{movinggridaproxsolution} in \eqref{eqn:CL}, multiply the equation by a test function $v \in \mathcal{V}_{h}(t)$ and use the change of variables theorem for integrals. This results in the equation
 \begin{equation}\label{eqn:weakCLReferenceCell}
\left(J_{K\left(t\right)}\left(\partial_{t}u_{h}\right),v^{*}\right)_{\refE}+\left(J_{K\left(t\right)}\left(\nabla_{x}\cdot\boldsymbol{f}\left(u_{h}\right)\right),v^{*}\right)_{\refE}=0,
\end{equation}
where $v^{*}=v\circ\chi_{K\left(t\right)}$. The chain rule formula and the metric transformations \eqref{Piola} provide
\begin{equation}\label{TimeDerivative}
\frac{d}{dt}u_{h}^{*}=\partial_{t}u_{h}+\boldsymbol{\omega}\cdot\nabla u_{h}=
\partial_{t}u_{h}+\boldsymbol{\omega}\cdot{\bf A}_{K\left(t\right)}^{-T}\nabla_{\boldsymbol{\xi}}u_{h}^{*},
\end{equation}
where $u_{h}^{*}=u_{h}\circ\chi_{K\left(t\right)}$. Next the identities \eqref{TimeDerivativeJacobian} and \eqref{TimeDerivative} provide
\begin{equation}\label{TimeDerivative1}
J_{K\left(t\right)}\left(\partial_{t}u_{h}\right)=\frac{d}{dt}\left(J_{K(t)}u_{h}^{*}\right)-J_{K(t)}\nabla_{\boldsymbol{\xi}}\cdot\left[{\bf A}_{K\left(t\right)}^{-1}\Big(\boldsymbol{\omega}u_{h}^{*}\Big)\right].
\end{equation}
Then the equation \eqref{eqn:weakCLReferenceCell} becomes
\begin{equation}\label{eqn:weakCLReferenceCell1}
\left(\frac{d}{dt}\left(J_{K\left(t\right)}u_{h}^{*}\right),v^{*}\right)_{\refE}+\left(J_{K\left(t\right)}\left(\nabla_{\boldsymbol{\xi}}\cdot\tilde{\boldsymbol{g}}\left(\boldsymbol{\omega},u_{h}^{*}\right)\right),v^{*}\right)_{\refE}=0,
\end{equation}
where $\tilde{\boldsymbol{g}}\left(\boldsymbol{\omega},u_{h}^{*}\right):=\mathrm{{\bf A}}_{K\left(t\right)}^{-1}\Big(\boldsymbol{g}\left(\boldsymbol{\omega},u_{h}^{*}\right)\Big)$ with
\begin{equation}\label{flux:g}
\boldsymbol{g}\left(\boldsymbol{\omega},u\right):=\boldsymbol{f}\left(u\right)-\boldsymbol{\omega}\left({\bf x},t\right)u.
\end{equation}
At this point, we proceed similar as in the derivation of the standard DG method on a static mesh. First, we apply the integration by parts formula in the second integral in \eqref{eqn:weakCLReferenceCell1}. Then, we replace the flux function $\tilde{\boldsymbol{g}}\left(\boldsymbol{\omega},u_{h}^{*,\text{int}_{\refE}}\right)\cdot\n_{\refE}$ in the surface integrals by a numerical flux function $\widehat{g}\left(\boldsymbol{\omega},u_{h}^{*,\text{int}_{\refE}},u_{h}^{*,\text{ext}_{\refE}},J_{K(t)}\tilde{\n}\left(t\right)\right)$ with $\tilde{\n}\left(t\right)=A_{K\left(t\right)}^{-T}\n_{\refE}$. The numerical flux function needs to satisfy certain properties. These properties are discussed in the Section \ref{sec:NumericalFlux}. Finally, on the reference cell, the semi-discrete ALE-DG method appears as the following problem:
\begin{problem}[The semi-discrete ALE-DG method on the reference cell]\label{ALE-DG2Method}
Find a function $u_{h}\in\mathcal{V}_{h}(t)$, such that for all $v\in \mathcal{V}_{h}(t)$ and all cells $K\left(t\right)\in\mathcal{T}_{\left(t\right)}$ holds
\begin{align}\label{eqn:DJ-CL2}
\begin{split}
 \left(\frac{d}{dt}\left(J_{K\left(t\right)}u_{h}^{*}\right),v^{*}\right)_{\refE}
 =&\left(J_{K\left(t\right)}\tilde{\boldsymbol{g}}\left(\boldsymbol{\omega},u_{h}^{*}\right),\nabla_{\boldsymbol{\xi}}v^{*}\right)_{\refE} \\
& -\left\langle \widehat{g}\left(\boldsymbol{\omega},u_{h}^{*,\text{int}_{\refE}},u_{h}^{*,\text{ext}_{\refE}},J_{K(t)}\tilde{\n}\left(t\right)\right),v^{*,\text{int}_{\refE}}\right\rangle _{\partial\refE},
\end{split}
\end{align}
where $\tilde{\n}\left(t\right)=A_{K\left(t\right)}^{-T}\n_{\refE}$.
\end{problem}
\noindent
Since the test functions $v^{*}=v\circ\chi_{K\left(t\right)}$ are time independent on the reference cell $\refE$, we obtain
\begin{equation}
\left(\frac{d}{dt}\left(J_{K\left(t\right)}u_{h}^{*}\right),v^{*}\right)_{\refE}=\frac{d}{dt}\left(J_{K\left(t\right)}u_{h}^{*},v^{*}\right)_{\refE}=\frac{d}{dt}\left(u_{h},v\right)_{K\left(t\right)}.
\end{equation}
Therefore, on the physical domain,  \textbf{\Cref{ALE-DG2Method}} is equivalent to:
\begin{problem}[The semi-discrete ALE-DG method]\label{ALE-DG1Method}
Find a function $u_{h}\in\mathcal{V}_{h}(t)$, such that for all $v\in \mathcal{V}_{h}(t)$ and all cells $K\left(t\right)\in\mathcal{T}_{\left(t\right)}$ holds
\begin{align}\label{eqn:DJ-CL1}
\begin{split}
\frac{d}{dt}\left(u_{h},v\right)_{K\left(t\right)}=& \left(\boldsymbol{g}\left(\boldsymbol{\omega},u_{h}\right),\nabla v\right)_{K\left(t\right)} \\
&-\left\langle \widehat{g}\left(\boldsymbol{\omega},u_{h}^{\text{int}_{K(t)}},u_{h}^{\text{ext}_{K(t)}},\n_{K(t)}\right),v^{\text{int}_{K(t)}}\right\rangle _{\partial K(t)}.
\end{split}
\end{align}
\end{problem}

\subsection{The numerical flux function}\label{sec:NumericalFlux}
The numerical flux in the ALE-DG method should satisfy:
\begin{enumerate}
\item[(P1)] The flux is consistent with $\boldsymbol{g}\left(\boldsymbol{\omega},u\right)\cdot\n_{K(t)}$.
\item[(P2)] The function $u\mapsto\widehat{g}\left(\boldsymbol{\omega},u,\cdot,\n_{K(t)}\right)$ is increasing and Lipschitz continuous.
\item[(P3)] The function $u\mapsto\widehat{g}\left(\boldsymbol{\omega},\cdot,u,\n_{K(t)}\right)$ is decreasing and Lipschitz continuous.
\item[(P4)] The flux is conservative such that
\[
\widehat{g}\left(\boldsymbol{\omega},u_{h}^{\text{int}_{K(t)}},u_{h}^{\text{ext}_{K(t)}},\n_{K(t)}\right)=-\widehat{g}\left(\boldsymbol{\omega},u_{h}^{\text{ext}_{K(t)}},u_{h}^{\text{int}_{K(t)}},-\n_{K(t)}\right).
\]
\end{enumerate}
\begin{remark}
The properties ($P1$) - ($P4$) of the numerical flux supply for any cell $K(t)\in\mathcal{T}_{\left(t\right)}$ and all $v\in\left[\min\left(u_{1},u_{2}\right),\max\left(u_{1},u_{2}\right)\right]$
\begin{equation}\label{Eflux}
\left(\boldsymbol{g}\left(\boldsymbol{\omega},v\right)\cdot\n_{K(t)}-\widehat{g}\left(\boldsymbol{\omega},u_{1},u_{2},\n_{K(t)}\right)\right)\left(u_{2}-u_{1}\right)\geq0.
\end{equation}
The inequality \eqref{Eflux} is the e-flux condition, which was introduced by Osher \cite{Osher1985}.
\end{remark}
In general every numerical flux with these properties can be used in  the ALE-DG method. A common example is the Lax-Friedrichs flux. This flux is given by
\begin{subequations}\label{Lax-Friedrichs}
\begin{equation}
\widehat{g}\left(\boldsymbol{\omega},u_{1},u_{2},\n_{K(t)}\right):=\widehat{g}_{+}\left(\boldsymbol{\omega},u_{1},\n_{K(t)}\right)-\widehat{g}_{-}\left(\boldsymbol{\omega},u_{2},\n_{K(t)}\right),
\end{equation}
\begin{equation}\label{g+-}
\widehat{g}_{\pm}\left(\boldsymbol{\omega},u,\n_{K(t)}\right):=\frac{1}{2}\left[\lambda^{n}u\pm\boldsymbol{g}\left(\boldsymbol{\omega},u\right)\cdot\n_{K(t)}\right],
\end{equation}
\begin{equation}\label{lambda}
\lambda^{n}:=\max\left\{ \left|\left(\partial_{u}\boldsymbol{g}\left(\boldsymbol{\omega},u\right)\right)\cdot\n_{K(t)}\right|:\ u\in\left[m,M\right],\ t\in\left[t_{n},t_{n+1}\right]\right\}.
\end{equation}
\end{subequations}
We note that the functions $\widehat{g}_{\pm}\left(\boldsymbol{\omega},u,\n_{K(t)}\right)$ are increasing in the second argument.

\section{Theoretical results for the semi-discrete method}
In this section, we present some theoretical results for the semi-discrete ALE-DG method. We start with a proof for the $\mathrm{L}^{2}$-stability of the semi-discrete ALE-DG method. This proof requires techniques which were introduced by Jiang and Shu in \cite{Jiang1994MC} to proof a cell entropy inequality for the DG method. We note that for scalar conservation laws the function $\eta\left(u\right)=\frac{1}{2}u^{2}$ is an entropy. Thus, the $\mathrm{L}^{2}$-stability provides also entropy stability in the sense that the total entropy at a certain time point is bounded by the total entropy at initial time.
\begin{prop}\label{L2stability}
The solution $u_{h}$ of the semi-discrete ALE-DG method satisfies for any $t\in [0,T]$
\begin{equation}\label{stabilityL2}
\norm{u_{h}(t)}_{\mathrm{L}^{2}\left(\Omega\right)}\leq\norm{u_{h}(0)}_{\mathrm{L}^{2}\left(\Omega\right)}
\end{equation}
when the problem \eqref{eqn:CL} is considered with periodic boundary conditions.
\end{prop}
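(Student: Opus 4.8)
The plan is to run the classical Jiang--Shu energy (cell-entropy) argument with the test function $v=u_{h}$, while tracking the moving-mesh metric terms carefully so that the grid-velocity contributions cancel. Since at each fixed time $u_{h}^{*}(\cdot,t)\in P^{k}(\refE)$ is an admissible test function, I would insert $v^{*}=u_{h}^{*}$ into the reference-cell formulation \eqref{eqn:DJ-CL2} of \textbf{\Cref{ALE-DG2Method}}. I deliberately work on the reference cell rather than with the physical form \textbf{\Cref{ALE-DG1Method}}, because the latter's left-hand side $\frac{d}{dt}(u_{h},v)_{K(t)}$ presupposes a time-independent $v^{*}$, whereas I need the time derivative to act on $J_{K(t)}u_{h}^{*}$ before freezing the test function.

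For the left-hand side I would expand $\frac{d}{dt}(J_{K(t)}u_{h}^{*})$ by the product rule, using that $J_{K(t)}$ is independent of $\boldsymbol{\xi}$ together with \eqref{TimeDerivativeJacobian} and \Cref{PropertiesGridVelocity}(iii) to write $\frac{d}{dt}J_{K(t)}=(\nabla\cdot\boldsymbol{\omega})J_{K(t)}$. A short computation then gives $\left(\frac{d}{dt}(J_{K(t)}u_{h}^{*}),u_{h}^{*}\right)_{\refE}=\frac{1}{2}\frac{d}{dt}\norm{u_{h}}_{\mathrm{L}^{2}(K(t))}^{2}+\frac{1}{2}\int_{K(t)}(\nabla\cdot\boldsymbol{\omega})u_{h}^{2}\,d\x$, where the change of variables returns the integrals to the physical cell. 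The geometric term $\frac{1}{2}\int_{K(t)}(\nabla\cdot\boldsymbol{\omega})u_{h}^{2}$ is the ALE-specific nuisance that must be disposed of.

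On the right-hand side I would first convert the volume term back to the physical cell, $(J_{K(t)}\tilde{\boldsymbol{g}},\nabla_{\boldsymbol{\xi}}u_{h}^{*})_{\refE}=(\boldsymbol{g}(\boldsymbol{\omega},u_{h}),\nabla u_{h})_{K(t)}$, using the metric identities \eqref{Piola}. Writing $\boldsymbol{g}=\boldsymbol{f}(u_{h})-\boldsymbol{\omega}u_{h}$ and introducing the potential $\boldsymbol{\Phi}$ with $\boldsymbol{\Phi}'=\boldsymbol{f}$, integration by parts turns $(\boldsymbol{f}(u_{h}),\nabla u_{h})_{K(t)}$ into the boundary term $\langle\boldsymbol{\Phi}(u_{h}^{\mathrm{int}})\cdot\n_{K(t)},1\rangle_{\partial K(t)}$, while $-(\boldsymbol{\omega}u_{h},\nabla u_{h})_{K(t)}=-\int_{K(t)}\boldsymbol{\omega}\cdot\nabla(u_{h}^{2}/2)$ produces a boundary term plus exactly $+\frac{1}{2}\int_{K(t)}(\nabla\cdot\boldsymbol{\omega})u_{h}^{2}$. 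This last term cancels the geometric term from the left-hand side; this cancellation is the heart of the proof, and it is precisely where the choice of the modified flux $\boldsymbol{g}=\boldsymbol{f}-\boldsymbol{\omega}u$ and the GCL structure pay off. What remains is a purely boundary identity for $\frac{1}{2}\frac{d}{dt}\norm{u_{h}}_{\mathrm{L}^{2}(K(t))}^{2}$.

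Finally I would sum over all cells and collapse the face integrals. On an interior face shared by $K$ and a neighbour, with traces $u^{-}:=u_{h}^{\mathrm{int}}$ and $u^{+}:=u_{h}^{\mathrm{ext}}$, I would pair the two cell contributions, using the conservativity property (P4) to eliminate the double-valued numerical flux, and recognize the volume-flux boundary terms as $\Psi(u^{-})-\Psi(u^{+})$ where $\Psi'(u)=\boldsymbol{g}(\boldsymbol{\omega},u)\cdot\n_{K(t)}$. The net face contribution then equals $-\int_{u^{-}}^{u^{+}}\left(\boldsymbol{g}(\boldsymbol{\omega},s)\cdot\n_{K(t)}-\widehat{g}\right)ds$, which is nonpositive by the e-flux condition \eqref{Eflux} (checking both signs of $u^{+}-u^{-}$). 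Under periodic boundary conditions every face is shared, so no boundary terms survive, and summation yields $\frac{d}{dt}\norm{u_{h}(t)}_{\mathrm{L}^{2}(\Omega)}^{2}\leq0$; integrating in time gives \eqref{stabilityL2}. I expect the main obstacle to be the moving-mesh bookkeeping in the first two steps, in particular verifying the exact cancellation of the $(\nabla\cdot\boldsymbol{\omega})u_{h}^{2}$ contributions and legitimately handling the time-dependent test function, after which the face telescoping is a routine adaptation of the static-mesh argument.
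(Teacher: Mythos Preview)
Your proposal is correct and follows essentially the same route as the paper: you test with $u_{h}^{*}$ on the reference cell, extract $\tfrac12\frac{d}{dt}\norm{u_{h}}_{\mathrm{L}^{2}(K(t))}^{2}$ plus the geometric term $\tfrac12\int_{K(t)}(\nabla\cdot\boldsymbol{\omega})u_{h}^{2}$, cancel the latter against the $-\boldsymbol{\omega}u_{h}$ part of the volume flux after integration by parts, and finish by summing over cells and invoking the e-flux condition \eqref{Eflux} on each face. The only cosmetic difference is that the paper packages $\boldsymbol{\Phi}(u)-\tfrac12\boldsymbol{\omega}u^{2}$ into a single function $\boldsymbol{G}$ and uses the mean value theorem on $\jump{\boldsymbol{G}\cdot\n}$ rather than your integral form $\int_{u^{-}}^{u^{+}}(\boldsymbol{g}\cdot\n-\widehat{g})\,ds$, but these are equivalent.
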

\begin{proof}
Let $K(t)\in\mathcal{T}_{\left(t\right)}$ be an arbitrary cell. We use the ALE-DG solution $u_{h}^{*}=u_{h}\circ\chi_{K\left(t\right)}$ as test function in the equation \eqref{eqn:DJ-CL2} and obtain
\begin{align}\label{stabilityL2:eq1}
\begin{split}
\left(\frac{d}{dt}\left(J_{K\left(t\right)}u_{h}^{*}\right),u_{h}^{*}\right)_{\refE}=& \left(J_{K\left(t\right)}\tilde{\boldsymbol{g}}\left(\boldsymbol{\omega},u_{h}^{*}\right),\nabla_{\boldsymbol{\xi}}u_{h}^{*}\right)_{\refE} \\
&-\left\langle \widehat{g}\left(\boldsymbol{\omega},u_{h}^{*,\text{int}_{\refE}},u_{h}^{*,\text{ext}_{\refE}},J_{K(t)}\tilde{\n}\left(t\right)\right),u_{h}^{*,\text{int}_{\refE}}\right\rangle _{\partial\refE}.
\end{split}
\end{align}
We obtain by the identity \eqref{TimeDerivativeJacobian} and the change of variables theorem for integrals
\begin{align}\label{stabilityL2:eq2}
\begin{split}
\left(\frac{d}{dt}\left(J_{K\left(t\right)}u_{h}^{*}\right),u_{h}^{*}\right)_{\refE}
=& \frac{1}{2}\frac{d}{dt}\left(\left(u_{h}^{*}\right)^{2},J_{K\left(t\right)}\right)_{\refE}+\frac{1}{2}\left(J_{K(t)}\nabla_{\boldsymbol{\xi}}\cdot\left[\left({\bf A}_{K\left(t\right)}^{-1}\right)\boldsymbol{\omega}\right],\left(u_{h}^{*}\right)^{2}\right)_{\refE} \\
=&\frac{1}{2}\frac{d}{dt}\left(u_{h},u_{h}\right)_{K\left(t\right)}+\frac{1}{2}\left(\nabla\cdot\boldsymbol{\omega},u_{h}^{2}\right)_{K\left(t\right)}.
\end{split}
\end{align}
Next, we define the vector valued functions
\begin{equation}\label{primitive1}
\boldsymbol{F}(u):=\left(\int^{u}f_{1}\left(u\right)\,du,...,\int^{u}f_{d}\left(u\right)\,du\right)^{T} \quad \text{and} \quad
\boldsymbol{G}\left(\boldsymbol{\omega},u\right):=\boldsymbol{F}\left(u\right)-\frac{1}{2}\boldsymbol{\omega}u^{2}.
\end{equation}
Then, we obtain by the metric transformations \eqref{Piola}, the functions \eqref{primitive1} and the change of variables theorem for integrals

\begin{align}\label{stabilityL2:eq3}
\begin{split}
& \left(J_{K\left(t\right)}\tilde{\boldsymbol{g}}\left(\boldsymbol{\omega},u_{h}^{*}\right),\nabla_{\boldsymbol{\xi}}u_{h}^{*}\right)_{\refE}
-\frac{1}{2}\left(J_{K(t)}\nabla_{\boldsymbol{\xi}}\cdot\left[\left({\bf A}_{K\left(t\right)}^{-1}\right)\boldsymbol{\omega}\right],\left(u_{h}^{*}\right)^{2}\right)_{\refE}\\
=&\left(J_{K\left(t\right)}\left({\bf A}_{K\left(t\right)}^{-1}\right)\boldsymbol{f}\left(u_{h}^{*}\right),\nabla_{\boldsymbol{\xi}}u_{h}^{*}\right)_{\refE}-\frac{1}{2}\left(J_{K\left(t\right)}\left({\bf A}_{K\left(t\right)}^{-1}\right)\boldsymbol{\omega},\nabla_{\boldsymbol{\xi}}\left(u_{h}^{*}\right)^{2}\right)_{\refE} \\
&-\frac{1}{2}\left(J_{K(t)}\nabla_{\boldsymbol{\xi}}\cdot\left[\left({\bf A}_{K\left(t\right)}^{-1}\right)\boldsymbol{\omega}\right],\left(u_{h}^{*}\right)^{2}\right)_{\refE}\\
=&\left(J_{K\left(t\right)}\left({\bf A}_{K\left(t\right)}^{-1}\right)\boldsymbol{f}\left(u_{h}^{*}\right),\nabla_{\boldsymbol{\xi}}u_{h}^{*}\right)_{\refE}-\frac{1}{2}\left(J_{K\left(t\right)}\nabla_{\boldsymbol{\xi}}\cdot\left[\left({\bf A}_{K\left(t\right)}^{-1}\right)\boldsymbol{\omega}\left(u_{h}^{*}\right)^{2}\right],1\right)_{\refE} \\
=& \left(J_{K\left(t\right)}\nabla_{\boldsymbol{\xi}}\cdot\left(\left({\bf A}_{K\left(t\right)}^{-1}\right)\boldsymbol{G}\left(\boldsymbol{\omega},u_{h}^*\right)\right),1\right)_{\refE}=\left(\nabla\cdot\boldsymbol{G}\left(\boldsymbol{\omega},u_{h}\right),1\right)_{K\left(t\right)}\\
\end{split}
\end{align}
Furthermore,  the divergence theorem gives
\begin{equation}\label{stabilityL2c}
\left(\nabla\cdot\boldsymbol{G}\left(\boldsymbol{\omega},u_{h}\right),1\right)_{K\left(t\right)}
=\sum_{\nu=1}^{d+1}\left\langle \boldsymbol{G}\left(\boldsymbol{\omega},u_{h}^{\text{int}_{K(t)}}\right)\cdot\n_{K\left(t\right)}^{\nu},1\right\rangle _{F_{K(t)}^{\nu}},
\end{equation}
where the vectors $\n_{K\left(t\right)}^{\nu}$, $\nu=1,\dots,d+1$,  are the outward normals of the cell $K\left(t\right)$ with respect to the simplex faces $F_{K(t)}^{\nu}$, $\nu=1,\dots,d+1$. Likewise, the metric transformations \eqref{Piola} provide
\begin{align}\label{stabilityL2:eq4}
\begin{split}
& \left\langle \widehat{g}\left(\boldsymbol{\omega},u_{h}^{*,\text{int}_{\refE}},u_{h}^{*,\text{ext}_{\refE}},J_{K(t)}\tilde{\n}\left(t\right)\right),u_{h}^{*,\text{int}_{\refE}}\right\rangle _{\partial\refE} \\
=& \left\langle \widehat{g}\left(\boldsymbol{\omega},u_{h}^{\text{int}_{K\left(t\right)}},u_{h}^{\text{ext}_{K\left(t\right)}},\n_{K\left(t\right)}\right),u_{h}^{\text{int}_{K\left(t\right)}}\right\rangle _{\partial K\left(t\right)} \\
=&\sum_{\nu=1}^{d+1}\left\langle \widehat{g}\left(\boldsymbol{\omega},u_{h}^{\text{int}_{K\left(t\right)}},u_{h}^{\text{ext}_{K\left(t\right)}},\n_{K\left(t\right)}^{\nu}\right),u_{h}^{\text{int}_{K\left(t\right)}}\right\rangle _{F_{K(t)}^{\nu}}.
\end{split}
\end{align}
Next, we rearrange the equation \eqref{stabilityL2:eq1} by applying the identities \eqref{stabilityL2:eq2}, \eqref{stabilityL2:eq3}, \eqref{stabilityL2c} and \eqref{stabilityL2:eq4}. This results in the equation
\begin{align}\label{stabilityL2d}
\begin{split}
\frac{1}{2}\frac{d}{dt}\left(u_{h},u_{h}\right)_{K\left(t\right)}=&\sum_{\nu=1}^{d+1}\left\langle \boldsymbol{G}\left(\boldsymbol{\omega},u_{h}^{\text{int}_{K(t)}}\right)\cdot\n_{K\left(t\right)}^{\nu},1\right\rangle _{F_{K(t)}^{\nu}}  \\
&-\sum_{\nu=1}^{d+1}\left\langle \widehat{g}\left(\boldsymbol{\omega},u_{h}^{\text{int}_{K(t)}},u_{h}^{\text{ext}_{K(t)}},\n_{K\left(t\right)}^{\nu}\right),u_{h}^{\text{int}_{K(t)}}\right\rangle _{F_{K(t)}^{\nu}}.
\end{split}
\end{align}
Then, we sum the equation \eqref{stabilityL2d} over all cells $K(t)\in\mathcal{T}_{(t)}$ and obtain
\begin{align}\label{stabilityL2e}
\begin{split}
\frac{1}{2}\frac{d}{dt}\left\Vert u_{h}\right\Vert _{\mathrm{L}^{2}\left(\Omega\right)}^{2} - \frac{1}{2}\sum_{K\left(t\right)\in\mathcal{T}_{\left(t\right)}}\sum_{\nu=1}^{d+1}\left\langle \jump{\boldsymbol{G}\left(\boldsymbol{\omega},u_{h}\right)\cdot\n_{K\left(t\right)}^{\nu}},1\right\rangle _{F_{K\left(t\right)}^{\nu}}
 \\
+ \frac{1}{2}\sum_{K\left(t\right)\in\mathcal{T}_{\left(t\right)}}\sum_{\nu=1}^{d+1}\left\langle \widehat{g}\left(\boldsymbol{\omega},u_{h}^{\text{int}_{K(t)}},u_{h}^{\text{ext}_{K(t)}},\n_{K\left(t\right)}^{\nu}\right),\jump{u_{h}}\right\rangle _{F_{K\left(t\right)}^{\nu}}&=0,
\end{split}
\end{align}
since we consider the problem \eqref{eqn:CL} with periodic boundary conditions. The definition of the function $\boldsymbol{G}(\boldsymbol{\omega},u)$ in \eqref{primitive1} yields $\partial_{u}\boldsymbol{G}(\boldsymbol{\omega},u)=\boldsymbol{g}(\boldsymbol{\omega},u)$. Thus, for any cell $K(t)\in\mathcal{T}_{(t)}$ the mean value theorem and the e-flux condition \eqref{Eflux} provide for all $\nu=1,\dots,d+1$
\begin{equation}
\left\langle \boldsymbol{g}\left(\boldsymbol{\omega},\theta_{K\left(t\right)}^{\nu}\right)\cdot\n_{K\left(t\right)}^{\nu}-\widehat{g}\left(\boldsymbol{\omega},u^{\text{int}_{K(t)}},u^{\text{ext}_{K(t)}},\n_{K\left(t\right)}^{\nu}\right),\jump{u_{h}}\right\rangle _{F_{K\left(t\right)}^{\nu}}\geq0,
\end{equation}
where $\theta_{K\left(t\right)}^{\nu}$ is a value between $\min\left(u_{h}^{\text{int}_{K(t)}},u_{h}^{\text{ext}_{K(t)}}\right)$ and $\max\left(u_{h}^{\text{int}_{K(t)}},u_{h}^{\text{ext}_{K(t)}}\right)$.
Hence, the inequality \eqref{stabilityL2} follows by integrating the equation \eqref{stabilityL2e} over the interval $[0,t]$.
\end{proof}
\noindent
Furthermore, for sufficiently smooth solutions of the initial value problem \eqref{eqn:CL}, we have a suboptimal a priori error estimate in the sense of the $\mathrm{L}^{\infty}\left(0,T;\mathrm{L}^{2}\left(\Omega\right)\right)$-norm for the semi-discrete ALE-DG method.
\begin{theorem}\label{errorestimate2D}
Consider the initial value problem \eqref{eqn:CL} with periodic boundary conditions and let $u\in\mathrm{W}^{1,\infty}\left(0,T;\mathrm{H}^{k+1}\left(\Omega\right)\right)$ be the exact solution, the flux function $\boldsymbol{f}\in\mathcal{C}^{3}\left(\R,\R^{d}\right)$ and the grid velocity field $\boldsymbol{\omega}$ be bounded in $\Omega\times\left[0,T\right]$ and have bounded derivatives and $u_{h}$ be the solution of the semi-discrete ALE-DG method. The test function space \eqref{testfunctions} is given by piecewise polynomials of degree $k\geq \max\left\{ 1,\frac{d}{2}\right\}$. Furthermore, ($A1$) - ($A3$) are satisfied and the global length $h$ is given by \eqref{hmax}.
Then, it exists a constant $C$, which depends on the final time $T$ and is independent of $h$ and $u_{h}$, such that 
\begin{equation}\label{apriorierror}
\left\Vert u-u_{h}\right\Vert _{\mathrm{L}^{\infty}\left(0,T;\mathrm{L}^{2}\left(\Omega\right)\right)}\leq Ch^{k+\frac{1}{2}}.
\end{equation}
\end{theorem}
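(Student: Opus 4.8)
The plan is to follow the energy method of Jiang and Shu, as refined by Zhang and Shu for RK-DG schemes, carried out directly in the moving reference-cell formulation of \Cref{ALE-DG2Method}. First I would introduce a projection $\Pi_{h}$ onto $\mathcal{V}_{h}(t)$, defined cellwise on $\refE$ as the $\mathrm{L}^{2}$-projection with respect to the reference measure weighted by $J_{K(t)}$ (so that the transported projection is the natural moving $\mathrm{L}^{2}$-projection and the relevant orthogonality against test functions holds). Standard approximation theory on shape-regular simplices, using (A3) and $u\in\mathrm{W}^{1,\infty}(0,T;\mathrm{H}^{k+1}(\Omega))$, then supplies $\norm{u-\Pi_{h}u}_{\mathrm{L}^{2}(\Omega)}+h^{1/2}\norm{u-\Pi_{h}u}_{\mathrm{L}^{2}(\partial K(t))}\leq Ch^{k+1}$ together with an analogous bound for $\partial_{t}(u-\Pi_{h}u)$. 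I split the error as $u-u_{h}=\eta-\xi$ with $\eta:=u-\Pi_{h}u$ and $\xi:=u_{h}-\Pi_{h}u\in\mathcal{V}_{h}(t)$.

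Second I would derive the error equation. Since a smooth exact solution makes the flux single-valued, $u$ satisfies \eqref{eqn:DJ-CL1} with the numerical flux replaced by $\boldsymbol{g}(\boldsymbol{\omega},u)\cdot\n_{K(t)}$ (consistency); subtracting this from the scheme and inserting the splitting gives an identity for $\tfrac{d}{dt}(\xi,\xi)_{K(t)}$ after choosing the test function $v=\xi$. Reusing the algebra of \eqref{stabilityL2:eq2}--\eqref{stabilityL2:eq4} from the proof of \Cref{L2stability}, the volume and surface terms in $\xi$ reorganise so that the monotone numerical flux produces a nonnegative jump-dissipation term $\tfrac12\sum\langle(\boldsymbol{g}(\boldsymbol{\omega},\theta)\cdot\n-\widehat{g}),\jump{\xi}\rangle$ on the right-hand side --- this is the mechanism that will yield the extra half order. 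The projection part $\eta$ enters only through terms that, by the orthogonality of $\Pi_{h}$, the boundary approximation estimate, and the time-dependence of the weighted projection (whose extra contributions are controlled by $\tfrac{d}{dt}J_{K(t)}$ via \Cref{PropertiesGridVelocity} and the boundedness of $\boldsymbol{\omega}$), are bounded by $Ch^{k+1/2}\norm{\xi}_{\mathrm{L}^{2}(\Omega)}$ plus a fraction of the dissipation term. The moving-mesh volume term $\tfrac12(\nabla\cdot\boldsymbol{\omega},\xi^{2})$ is absorbed into $C\norm{\xi}_{\mathrm{L}^{2}(\Omega)}^{2}$ using the assumed bounds on the derivatives of $\boldsymbol{\omega}$.

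Third --- and this is where the nonlinearity forces the dimensional restriction --- I would handle $\boldsymbol{f}(u)-\boldsymbol{f}(u_{h})$ by Taylor expansion, $\boldsymbol{f}(u)-\boldsymbol{f}(u_{h})=\boldsymbol{f}'(u)(u-u_{h})+\mathcal{O}((u-u_{h})^{2})$, using $\boldsymbol{f}\in\mathcal{C}^{3}$; the linear part is treated like the transport term, while the quadratic remainder involves $\norm{u-u_{h}}_{\mathrm{L}^{\infty}(\Omega)}$. To close the estimate I would make the a priori assumption $\norm{u-u_{h}}_{\mathrm{L}^{2}(\Omega)}\leq h^{k+1/2}$ on a maximal subinterval, whence the inverse inequality $\norm{\xi}_{\mathrm{L}^{\infty}(\Omega)}\leq Ch^{-d/2}\norm{\xi}_{\mathrm{L}^{2}(\Omega)}$ (with constant uniform in $t$ by (A3)) gives $\norm{u-u_{h}}_{\mathrm{L}^{\infty}(\Omega)}\leq Ch^{k+1/2-d/2}$, which tends to zero precisely because $k\geq\max\{1,\tfrac{d}{2}\}$; the quadratic remainder is then absorbed into the dissipation and into $C\norm{\xi}_{\mathrm{L}^{2}(\Omega)}^{2}$. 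After Cauchy--Schwarz and Young, summation over all cells produces a differential inequality of the form $\tfrac{d}{dt}\norm{\xi}_{\mathrm{L}^{2}(\Omega)}^{2}\leq C\norm{\xi}_{\mathrm{L}^{2}(\Omega)}^{2}+Ch^{2k+1}$, and Gr\"onwall's inequality yields $\norm{\xi}_{\mathrm{L}^{\infty}(0,T;\mathrm{L}^{2}(\Omega))}\leq Ch^{k+1/2}$; a continuation argument then verifies the a priori assumption a posteriori, and combining with the estimate for $\eta$ gives \eqref{apriorierror}.

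The hard part will be the simultaneous treatment of the quadratic flux remainder, the moving-mesh contributions, and the bootstrap: one must show that the remainder is dominated by the monotone-flux dissipation together with $C\norm{\xi}_{\mathrm{L}^{2}(\Omega)}^{2}$ \emph{uniformly} over the time-dependent mesh, which hinges on an inverse inequality whose constant is controlled by (A3) and is exactly what makes the threshold $k\geq\tfrac{d}{2}$ unavoidable. The time-dependence of the projection on the moving cells, absent in the static-mesh analyses of Zhang and Shu, is the main new technical ingredient to keep track of.
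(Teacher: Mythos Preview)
The paper does not actually give a proof of this theorem: immediately after the statement it says the result ``can be proven with standard techniques from approximation theory'' and refers to the one-dimensional analysis in Klingenberg, Schn\"ucke and Xia (2017) and the two-dimensional Hamilton--Jacobi analysis in the companion paper, then explicitly skips the proof. The only additional information is the remark that follows, which records that the argument hinges on the a priori assumption $\norm{u-u_{h}}_{\mathrm{L}^{\infty}(\Omega\times[0,T])}\leq h^{1/2}$.

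Your sketch is exactly the Zhang--Shu energy argument that those references carry out, transported to the moving reference-cell setting: $\mathrm{L}^{2}$-type projection, error splitting $\eta-\xi$, reuse of the $\mathrm{L}^{2}$-stability algebra to extract the monotone-flux dissipation, Taylor expansion of $\boldsymbol{f}$, and a bootstrap closed by the inverse inequality. Your bootstrap (assume the $\mathrm{L}^{2}$ estimate on a maximal interval, derive $\norm{u-u_{h}}_{\mathrm{L}^{\infty}}\leq Ch^{k+1/2-d/2}$ via the inverse inequality, then continue) is an equivalent formulation of the paper's stated a priori assumption and is precisely what forces the threshold $k\geq d/2$; the paper's remark confirms this is the intended mechanism. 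So your proposal is correct and coincides with the approach the paper points to --- you have simply written out what the paper chose to omit.
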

\noindent
\Cref{errorestimate2D} can be proven with standard techniques from approximation theory (cf. Ciarlet \cite{CiarletFE}). In particular, a one dimensional proof is given in \cite{Klingenberg2016} and in \cite{KlingenbergHJ2016HJ} two dimensional error analysis for a semi-discrete ALE-DG method to solve the Hamilton-Jacobi equations is given. In addition, error analysis for the fully-discrete ALE-DG methods is given in \cite{Zhou2018}. Since, there are already these publications on error analysis for the ALE-DG method in the literature and the proof  of \Cref{errorestimate2D} vary merely in technical details, we skip the proof in this paper.
\begin{remark}
The proof of \Cref{errorestimate2D} requires the a priori assumption
\begin{equation}\label{Apriori}
\left\Vert u-u_{h}\right\Vert _{\mathrm{L}^{\infty}\left(\Omega\times\left[0,T\right]\right)}\leq h^{\frac{1}{2}}.
\end{equation}
These a priori assumption is not necessary, if the problem \eqref{eqn:CL} is considered with a linear flux function $\boldsymbol{f}\left(u\right)={\bf c}u$, ${\bf c}\in \R^{d}$. In the one-dimensional case, the a priori assumption
\begin{equation}\label{Apriori3}
\left\Vert u-u_{h}\right\Vert _{\mathrm{L}^{\infty}\left(0,T;\mathrm{L}^{2}\left(\Omega\right)\right)}\leq h
\end{equation}
can be applied, since for $d=1$ the inequality \eqref{Apriori3} and the inverse inequality \cite[Theorem 3.2.6.]{CiarletFE} provide the a priori assumption \eqref{Apriori}. The assumption \eqref{Apriori3} was applied in \cite{QZhang2004, Xu2007}. Moreover, the statement of \cref{errorestimate2D} can be also proven, for flux functions with less smoothness. Nevertheless, this requires a more restrictive a priori assumption and more restrictive bounds for the parameter $k$. This assumption was applied by Klingenberg et al. in \cite{Klingenberg2016}.
\end{remark}

\section{The fully-discrete method}
In this section, we discuss the time discretization of the ALE-DG method. In the first part of this section, we prove that the fully-discrete ALE-DG method satisfies  the discrete geometric conservation law (D-GCL). Afterward, in two dimensions, we prove that the second and the third order accurate fully-discrete ALE-DG methods satisfy the maximum principle, when the bound preserving limiter developed by Zhang, Xia and Shu in \cite{Zhang2012} is applied.

\subsection{Total-variation-diminishing Runge-Kutta (TVD-RK) methods}
We apply the high order TVD-RK methods developed by Shu in \cite{Shu} for the time discretization, which is also known as strong stability preserving Runge-Kutta (SSP-RK) methods \cite{Gottlieb1998, Gottlieb2001}. For a given ODE ${u}_t=q\left(u,t\right)$, a $s$-stage TVD-RK method can be written as 
\begin{align}\label{DiscreteODE}
\begin{split}
u^{n,0}= & u^{n}, \\
u^{n,i}= & \sum_{j=0}^{i-1}\left(\alpha_{ij}u^{n,j}+\triangle t\beta_{ij}q\left(u^{n,j},t_{n+\gamma_{j}}\right)\right), \quad \text{for }i=1,\dots,s, \\
u^{n+1}=& u^{n,s},
\end{split}
\end{align}
where $t_{n+\gamma_{j}}:=t_{n}+\gamma_{j}\triangle t$. The coefficients of the $s$-stage TVD-RK method \eqref{DiscreteODE} need to satisfy
\begin{equation}
0\leq\gamma_{j}\leq1;\qquad\alpha_{ij},\beta_{ij}\geq0;\qquad\alpha_{ij}=0\quad\Leftrightarrow\quad\beta_{ij}=0;\qquad\sum_{j=0}^{i-1}\alpha_{ij}=1;
\end{equation}
for all $i=1,\dots,s$ and $j=0,\dots,s-1$. In the Section \ref{TheDiscreteMaximumPrinciple}, we will investigate the commonly used second and third order TVD-RK methods from Shu \cite{Shu}. The coefficients for these methods are given in the Tables \ref{tab:TVDRK2} and \ref{tab:TVDRK3}.
\begin{center}
\captionof{table}{The coefficients for the second order TVD-RK method from Shu \cite{Shu}.}\label{tab:TVDRK2}
\begin{tabular}{c}
\toprule
TVD-RK2\\ \midrule
\begin{tabular}{c|c|c}
$\gamma_{0}=0$ & $\alpha_{10}=1$ & $\beta_{10}=1$ \\
$\gamma_{1}=1$ & $\alpha_{20}=\frac{1}{2}$, $\alpha_{21}=\frac{1}{2}$ & $\beta_{20}=0$, $\beta_{21}=\frac{1}{2}$
\end{tabular} \\
\bottomrule\par
\end{tabular}
\end{center}

\begin{center}
\normalfont\captionof{table}{The coefficients for the third order TVD-RK method from Shu \cite{Shu}.}\label{tab:TVDRK3}
\begin{tabular}{c}
\toprule
TVD-RK3\\ \midrule
\begin{tabular}{c|c|c}
$\gamma_{0}=0$ & $\alpha_{10}=1$ & $\beta_{10}=1$ \\
$\gamma_{1}=1$ & $\alpha_{20}=\frac{3}{4}$, $\alpha_{21}=\frac{1}{4}$ & $\beta_{20}=0$, $\beta_{21}=\frac{1}{4}$  \\
$\gamma_{2}=\frac{1}{2}$ & $\alpha_{31}=\frac{1}{3}$, $\alpha_{31}=0$, $\alpha_{32}=\frac{2}{3}$  & $\beta_{30}=0$, $\beta_{31}=0$, $\beta_{32}=\frac{2}{3}$
\end{tabular} \\
\bottomrule\par
\end{tabular}
\end{center}

\subsection{The discrete geometric conservation law (D-GCL)}\label{Section:GCL}
For the time discretization of the semi-discrete ALE-DG \textbf{Problem \ref{ALE-DG2Method}}, it is convenient to introduce the notation
\begin{align}\label{FancyRHS}
\begin{split}
\mathcal{G}\left(u_{h}^{*},v^{*},J_{K\left(t\right)},t\right):=&\left(J_{K\left(t\right)}\tilde{\boldsymbol{g}}\left(\boldsymbol{\omega}\left(t\right),u_{h}^{*}\right),\nabla_{\boldsymbol{\xi}}v^{*}\right)_{\refE} \\
&-\left\langle \widehat{g}\left(\boldsymbol{\omega},u_{h}^{*,\text{int}_{\refE}},u_{h}^{*,\text{ext}_{\refE}},J_{K\left(t\right)}\tilde{\n}\left(t\right)\right),v^{*,\text{int}_{\refE}}\right\rangle _{\partial\refE},
\end{split}
\end{align}
where $\tilde{\n}\left(t\right)=A_{K\left(t\right)}^{-T}\n_{\refE}$. At this point, we note that the grid velocity satisfies the property \textit{(ii)} in \Cref{PropertiesGridVelocity}. Therefore, in order to avoid confusion with the time-dependency of the grid velocity, we will highlight the time variable if the grid velocity depends on time, otherwise the time variable will be omitted. For all $v\in\mathcal{V}_{h}(t)$ with $v^{*}=v\circ\chi_{K\left(t\right)}$ the equation \eqref{eqn:DJ-CL2} can be written as
\begin{equation}\label{eqn:DJ-CL2V2}
\left(\frac{d}{dt}\left(J_{K\left(t\right)}u_{h}^{*}\right),v^{*}\right)_{\refE}=\mathcal{G}\left(u_{h}^{*},v^{*},J_{K\left(t\right)},t\right).
\end{equation}
In accordance with Guillard and Farhat \cite{Farhat2000}, we introduce the following definition.
\begin{definition}\label{Def:D-GCL1}
A fully-discrete moving mesh method for the initial value problem \eqref{eqn:CL} satisfies a D-GCL, if for all $c\in\R$ and $n=0,\dots,\mathcal{N}-1$
\begin{equation*}\label{Def:D-GCL}
u_{h}\left(\mathbf{x},t_{n}\right)=c,\quad\text{for all }\mathbf{x}\in\Omega\qquad\Rightarrow\qquad u_{h}\left(\mathbf{x},t_{n+1}\right)=c,\quad\text{for all }\mathbf{x}\in\Omega.
\end{equation*}
\end{definition}
\begin{lemma}\label{RHS}
Let $c\in\R$ be a constant. Then for all $t\in \left[t_{n},t_{n+1}\right]$, $K\left(t\right)\in\mathcal{T}_{\left(t\right)}$ and $v\in\mathcal{V}_{h}(t)$ with $v^{*}=v\circ\chi_{K\left(t\right)}$, it holds
\begin{equation}\label{RHS1}
\mathcal{G}\left(c,v^{*},J_{K\left(t\right)},t\right)=\left(J_{K\left(t\right)}\nabla_{\boldsymbol{\xi}}\cdot\left[\mathrm{{\bf A}}_{K\left(t\right)}^{-1}\Big(\boldsymbol{\omega}\left(t\right)c\Big)\right],v^{*}\right)_{\refE}.
\end{equation}
\end{lemma}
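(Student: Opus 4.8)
The plan is to evaluate $\mathcal{G}\left(c,v^{*},J_{K\left(t\right)},t\right)$ directly from its definition \eqref{FancyRHS} after substituting the constant state $u_{h}^{*}=c$, and then to reorganise the volume term by an integration by parts on the reference cell $\refE$. With $u_{h}^{*}=c$ the definition \eqref{flux:g} gives $\boldsymbol{g}\left(\boldsymbol{\omega},c\right)=\boldsymbol{f}\left(c\right)-\boldsymbol{\omega}\left(t\right)c$ and $\tilde{\boldsymbol{g}}\left(\boldsymbol{\omega},c\right)={\bf A}_{K\left(t\right)}^{-1}\boldsymbol{g}\left(\boldsymbol{\omega},c\right)$, so that
\begin{equation*}
\mathcal{G}\left(c,v^{*},J_{K\left(t\right)},t\right)=\left(J_{K\left(t\right)}\tilde{\boldsymbol{g}}\left(\boldsymbol{\omega},c\right),\nabla_{\boldsymbol{\xi}}v^{*}\right)_{\refE}-\left\langle \widehat{g}\left(\boldsymbol{\omega},c,c,J_{K\left(t\right)}\tilde{\n}\left(t\right)\right),v^{*,\text{int}_{\refE}}\right\rangle _{\partial\refE}.
\end{equation*}

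The first key step is to evaluate the surface flux. Since both trace arguments coincide and equal $c$, the consistency property (P1) yields $\widehat{g}\left(\boldsymbol{\omega},c,c,\mathbf{m}\right)=\boldsymbol{g}\left(\boldsymbol{\omega},c\right)\cdot\mathbf{m}$ for the normal argument $\mathbf{m}$. Inserting $\mathbf{m}=J_{K\left(t\right)}\tilde{\n}\left(t\right)=J_{K\left(t\right)}{\bf A}_{K\left(t\right)}^{-T}\n_{\refE}$ and using $\boldsymbol{g}\cdot{\bf A}_{K\left(t\right)}^{-T}\n_{\refE}=\left({\bf A}_{K\left(t\right)}^{-1}\boldsymbol{g}\right)\cdot\n_{\refE}$, the flux reduces to $J_{K\left(t\right)}\tilde{\boldsymbol{g}}\left(\boldsymbol{\omega},c\right)\cdot\n_{\refE}$. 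I would then apply the divergence theorem on $\refE$ to the volume term, obtaining
\begin{equation*}
\left(J_{K\left(t\right)}\tilde{\boldsymbol{g}}\left(\boldsymbol{\omega},c\right),\nabla_{\boldsymbol{\xi}}v^{*}\right)_{\refE}=\left\langle J_{K\left(t\right)}\tilde{\boldsymbol{g}}\left(\boldsymbol{\omega},c\right)\cdot\n_{\refE},v^{*,\text{int}_{\refE}}\right\rangle _{\partial\refE}-\left(\nabla_{\boldsymbol{\xi}}\cdot\left(J_{K\left(t\right)}\tilde{\boldsymbol{g}}\left(\boldsymbol{\omega},c\right)\right),v^{*}\right)_{\refE},
\end{equation*}
and observe that the generated boundary integral exactly cancels the consistent surface flux above. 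This collapses $\mathcal{G}$ to the single volume term $-\left(\nabla_{\boldsymbol{\xi}}\cdot\left(J_{K\left(t\right)}\tilde{\boldsymbol{g}}\left(\boldsymbol{\omega},c\right)\right),v^{*}\right)_{\refE}$.

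The second key step is to split $J_{K\left(t\right)}\tilde{\boldsymbol{g}}\left(\boldsymbol{\omega},c\right)=J_{K\left(t\right)}{\bf A}_{K\left(t\right)}^{-1}\boldsymbol{f}\left(c\right)-J_{K\left(t\right)}{\bf A}_{K\left(t\right)}^{-1}\left(\boldsymbol{\omega}\left(t\right)c\right)$ and to discard the physical-flux part. Because $c$ is constant, $\boldsymbol{f}\left(c\right)$ is a constant vector, and in the affine-simplex setting both ${\bf A}_{K\left(t\right)}^{-1}$ and $J_{K\left(t\right)}$ are independent of $\boldsymbol{\xi}$ (cf. \eqref{Jacobian}); hence ${\bf A}_{K\left(t\right)}^{-1}\boldsymbol{f}\left(c\right)$ is a constant vector field on $\refE$ and $\nabla_{\boldsymbol{\xi}}\cdot\left(J_{K\left(t\right)}{\bf A}_{K\left(t\right)}^{-1}\boldsymbol{f}\left(c\right)\right)=0$. (Equivalently, this is the metric transformation \eqref{Piola} applied to the constant field $\boldsymbol{f}\left(c\right)$, whose physical divergence vanishes.) Thus only the grid-velocity contribution survives, and factoring the spatially constant $J_{K\left(t\right)}$ back out of the divergence gives
\begin{equation*}
\mathcal{G}\left(c,v^{*},J_{K\left(t\right)},t\right)=\left(J_{K\left(t\right)}\nabla_{\boldsymbol{\xi}}\cdot\left[{\bf A}_{K\left(t\right)}^{-1}\left(\boldsymbol{\omega}\left(t\right)c\right)\right],v^{*}\right)_{\refE},
\end{equation*}
which is precisely \eqref{RHS1}. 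Note that $\boldsymbol{\omega}$ is genuinely $\boldsymbol{\xi}$-dependent by \Cref{PropertiesGridVelocity}\textit{(i)}, so this grid-velocity term does not vanish, consistent with the nontrivial right-hand side.

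I expect the main difficulty to be bookkeeping rather than conceptual depth: one must track the scaled normal $J_{K\left(t\right)}\tilde{\n}\left(t\right)$ carefully so that the consistent numerical flux matches, and thereby cancels, the boundary integral produced by the integration by parts, and then invoke the affine metric identity to eliminate the physical-flux term. Once these two cancellations are in place the remaining manipulations are routine.
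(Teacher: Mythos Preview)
Your proof is correct and follows essentially the same approach as the paper: both use the consistency property (P1) to collapse the numerical flux to $\tilde{\boldsymbol{g}}\left(\boldsymbol{\omega},c\right)\cdot\left(J_{K\left(t\right)}\n_{\refE}\right)$, integrate by parts on $\refE$, and exploit that $\boldsymbol{f}\left(c\right)$ together with $J_{K\left(t\right)}{\bf A}_{K\left(t\right)}^{-1}$ are $\boldsymbol{\xi}$-independent so that only the grid-velocity contribution survives in the volume term. The only cosmetic difference is the order of operations: the paper eliminates the $\boldsymbol{f}\left(c\right)$ part during the integration by parts (obtaining \eqref{RHS2} directly), whereas you first integrate by parts the full $\tilde{\boldsymbol{g}}$, cancel the boundary terms, and then drop the $\boldsymbol{f}\left(c\right)$ piece from the remaining volume integral.
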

\begin{proof}
Since $\boldsymbol{f}\left(c\right)$ contains merely constant coefficients and
\begin{equation*}
\tilde{\boldsymbol{g}}\left(\boldsymbol{\omega},c\right)=\mathrm{{\bf A}}_{K\left(t\right)}^{-1}\Big(\boldsymbol{g}\left(\boldsymbol{\omega},c\right)\Big)=\mathrm{{\bf A}}_{K\left(t\right)}^{-1}\Big(\boldsymbol{f}\left(c\right)\Big)-\mathrm{{\bf A}}_{K\left(t\right)}^{-1}\Big(\boldsymbol{\omega}\left(t\right)c\Big),
\end{equation*}
the integration by parts formula provides
\begin{align}\label{RHS2}
\begin{split}
\left(J_{K\left(t\right)}\tilde{\boldsymbol{g}}\left(\boldsymbol{\omega}\left(t\right),c\right),\nabla_{\boldsymbol{\xi}}v^{*}\right)_{\refE}
=& \quad \left(J_{K\left(t\right)}\nabla_{\boldsymbol{\xi}}\cdot\left[\mathrm{{\bf A}}_{K\left(t\right)}^{-1}\big(\boldsymbol{\omega}\left(t\right)c\big)\right],v^{*}\right)_{\refE} \\
& +\left\langle \tilde{\boldsymbol{g}}\left(\boldsymbol{\omega}\left(t\right),c\right)\cdot\left(J_{K\left(t\right)}\n_{\refE}\right),v^{*,\text{int}_{\refE}}\right\rangle _{\partial\refE}.
\end{split}
\end{align}
Furthermore, since $\tilde{\n}\left(t\right)=A_{K\left(t\right)}^{-T}\n_{K_{ref}}$, the property (P1) of the numerical flux provides
\begin{align}\label{RHS3}
\begin{split}
\widehat{g}\left(\boldsymbol{\omega},c,c,J_{K\left(t\right)}\tilde{\n}\left(t\right)\right)=\tilde{\boldsymbol{g}}\left(\boldsymbol{\omega}\left(t\right),c\right)\cdot\left(J_{K\left(t\right)}\n_{\refE}\right).
\end{split}
\end{align}
Thus, we obtain the identity \eqref{RHS1} by \eqref{RHS2} and \eqref{RHS3}.
\end{proof}
\noindent
Next, we assume that $u_{h}^*=c$ solves the semi-discrete ALE-DG method \textbf{Problem \ref{ALE-DG2Method}}. Then, we obtain by \eqref{eqn:DJ-CL2V2} and \eqref{RHS1}
\begin{equation}\label{eqn:ConstatState}
\left(\frac{d}{dt}\left(J_{K\left(t\right)}c\right),v^{*}\right)_{\refE}=\left(J_{K\left(t\right)}\nabla_{\boldsymbol{\xi}}\cdot\left[\mathrm{{\bf A}}_{K\left(t\right)}^{-1}\big(\boldsymbol{\omega}\left(t\right)c\big)\right],v^{*}\right)_{\refE}.
\end{equation}
The equation \eqref{eqn:ConstatState} and the ODE \eqref{TimeDerivativeJacobian} are equivalent, since $c$ is an arbitrary constant, $v\in\mathcal{V}_{h}(t)$ with $v^{*}=v\circ\chi_{K\left(t\right)}$ is an arbitrary test function and the quantities
\begin{equation}
J_{K\left(t\right)}\in P^{d}\left(\left[t_{n},t_{n+1}\right]\right), \qquad \left(\nabla_{\boldsymbol{\xi}}\cdot\left[\mathrm{{\bf A}}_{K\left(t\right)}^{-1}\big(\boldsymbol{\omega}\left(t\right)\big)\right]\right)J_{K\left(t\right)}\in P^{d-1}\left(\left[t_{n},t_{n+1}\right]\right)
\end{equation}
are merely time-dependent. We note that the time evolution of the  metric terms $J_{K\left(t\right)}$ needs to be respected in the time discretization of the semi-discrete ALE-DG method \textbf{Problem \ref{ALE-DG2Method}}. Therefore, we discretize the ODE \eqref{TimeDerivativeJacobian} and \eqref{eqn:DJ-CL2V2} simultaneously by the same TVD-RK method. The stage solutions of the TVD-RK  discretization for \eqref{TimeDerivativeJacobian} will be used to update the metric terms in the TVD-RK discretization for \eqref{eqn:DJ-CL2V2}.
\\
\noindent
{\bf\textit{The fully-discrete ALE-DG method:}}
First, the ODE \eqref{TimeDerivativeJacobian} is discretized by a s-stage TVD-RK method:
\begin{subequations}\label{DiscreteTimeDerivativeJacobian}
\begin{align}
 &J_{K^{n,0}}=J_{K^{n}}, \label{DiscreteTimeDerivativeJacobian1} \\
& J_{K^{n,i}}=\sum_{j=0}^{i-1}\left(\alpha_{ij}J_{K^{n,j}}+\beta_{ij}\triangle t\left(\nabla_{\boldsymbol{\xi}}\cdot\left[\mathrm{{\bf A}}_{K^{n+\gamma_{j}}}^{-1}\big(\boldsymbol{\omega}^{n+\gamma_{j}}\big)\right]\right)J_{K^{n,j}}\right), \quad \text{for }i=1,\dots,s, \label{DiscreteTimeDerivativeJacobian2} \\
& J_{K^{n+1}}=J_{K^{n,s}}, \label{DiscreteTimeDerivativeJacobian3}
\end{align}
\end{subequations}
where $K^{n+\gamma_{j}}:=K\left(t_{n}+\gamma_{j}\triangle t\right)$ and $\boldsymbol{\omega}^{n+\gamma_{j}}:=\boldsymbol{\omega}\left(t_{n}+\gamma_{j}\triangle t\right)$.
The stage solutions $\left\{ J_{K^{n,i}}\right\} _{i=0}^{s}$ are used to update the metric terms in the TVD-RK discretization of \eqref{eqn:DJ-CL2V2}. The Runge-Kutta method needs to solve the ODE \eqref{TimeDerivativeJacobian} exact such that
\begin{equation}\label{AdditionalODECondition}
J_{K^{n+1}}=J_{K\left(t_{n+1}\right)}=d!\left|K\left(t_{n+1}\right)\right|,\qquad\forall K\left(t_{n+1}\right)\in\mathcal{T}_{\left(t_{n+1}\right)},
\end{equation}
where $\mathcal{T}_{(t_{n+1})}$ is the regular mesh of simplices which has been used in the Section \ref{Sec:TheALE-DGSetting} to construct the time-dependent cells \eqref{cells}. We note that in a $d$-dimensional space a TVD-RK method with order greater than or equal to $d$ is necessary to compute the metric term $J_{K^{n+1}}$ exactly, since the right hand side of the equation \eqref{TimeDerivativeJacobian} belongs to the space $P^{d-1}\left(\left[t_{n},t_{n+1}\right]\right)$.
\begin{problem}[The fully-discrete ALE-DG method on the reference cell]\label{FullDiscreteALE-DGMethod}
Find a function $u_{h}\in\mathcal{V}_{h}(t)$, such that for all $v\in \mathcal{V}_{h}(t)$ there holds
\begin{subequations}\label{FullDiscreteALE-DGMethod1}
\begin{align}
\left(J_{K^{n,0}}u_{h}^{n,0,*},v^{*}\right)_{\refE}=&\quad \left(J_{K^{n}}u_{h}^{n,*},v^{*}\right)_{\refE}, \\
\left(J_{K^{n,i}}u_{h}^{n,i,*},v^{*}\right)_{\refE}=&\quad
\sum_{j=0}^{i-1}\alpha_{ij}\left(J_{K^{n,j}}u_{h}^{n,j,*},v^{*}\right)_{\refE} \nonumber \\
& +\sum_{j=0}^{i-1}\beta_{ij}\triangle t\mathcal{G}\left(u_{h}^{n,j,*},v^{*},J_{K^{n,j}},t_{n+\gamma_{j}}\right),\quad\text{for }i=1,\dots,s, \label{FullDiscreteALE-DGMethod2} \\
\left(J_{K^{n+1}}u_{h}^{n+1,*},v^{*}\right)_{\refE}=&\quad \left(J_{K^{n,s}}u_{h}^{n,s,*},v^{*}\right)_{\refE},
\end{align}
\end{subequations}
where the metric terms $\left\{ J_{K^{n,i}}\right\} _{i=0}^{s}$ are computed by \eqref{DiscreteTimeDerivativeJacobian}.
\end{problem}
\noindent
We note that on a static mesh the cells $K(t)$ are time-independent. Thus, on a static mesh, the \textbf{Problem \ref{FullDiscreteALE-DGMethod}} corresponds to the TVD-RK DG method developed by Cockburn and Shu in \cite{Cockburn2001}. Next, we prove that the fully-discrete ALE-DG method satisfies the following statement.
\begin{theorem}\label{GCL}
Suppose a s-stage TVD-RK method with order greater than or equal to $d$ is used in \eqref{DiscreteTimeDerivativeJacobian} and \textbf{Problem \ref{FullDiscreteALE-DGMethod}}, and the solutions $\left\{ J_{K^{n,i}}\right\} _{i=0}^{s}$ are used to compute the metric terms in \textbf{Problem \ref{FullDiscreteALE-DGMethod}}. Moreover, the solution at time level $t_{n}$ satisfies $u_{h}^{n,*}=c\in\R$. Then it is $u_{h}^{n,i,*}=c$ for all $i=0,\dots,s$.
\end{theorem}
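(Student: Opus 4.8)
The plan is to argue by induction on the stage index $i$, showing that every Runge--Kutta stage of \textbf{Problem \ref{FullDiscreteALE-DGMethod}} inherits the constant value $c$. The engine of the argument is that the metric ODE \eqref{TimeDerivativeJacobian} and the DG update \eqref{eqn:DJ-CL2V2} are advanced with the \emph{same} TVD-RK coefficients $\alpha_{ij},\beta_{ij}$, so that the right-hand side produced by a constant DG state telescopes into exactly the metric recursion \eqref{DiscreteTimeDerivativeJacobian2}. For the base case $i=0$ I would combine $J_{K^{n,0}}=J_{K^{n}}$ from \eqref{DiscreteTimeDerivativeJacobian1} with the first line of \eqref{FullDiscreteALE-DGMethod1} and the hypothesis $u_{h}^{n,*}=c$ to obtain $\left(J_{K^{n,0}}u_{h}^{n,0,*},v^{*}\right)_{\refE}=\left(J_{K^{n,0}}c,v^{*}\right)_{\refE}$ for all $v^{*}\in P^{k}(\refE)$, hence $u_{h}^{n,0,*}=c$.

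For the inductive step I would assume $u_{h}^{n,j,*}=c$ for $j=0,\dots,i-1$. The key observation is that \Cref{RHS} remains valid when the exact Jacobian is replaced by the stage value $J_{K^{n,j}}$: in the definition \eqref{FancyRHS} of $\mathcal{G}$ the Jacobian enters only as a factor independent of $\boldsymbol{\xi}$, while the surface-flux cancellation is driven solely by property (P1) and the relation $\tilde{\mathbf{n}}(t)=\mathbf{A}_{K(t)}^{-T}\mathbf{n}_{\refE}$ evaluated at the stage time $t_{n+\gamma_{j}}$. Thus $\mathcal{G}\left(c,v^{*},J_{K^{n,j}},t_{n+\gamma_{j}}\right)=c\big(J_{K^{n,j}}\,\nabla_{\boldsymbol{\xi}}\cdot[\mathbf{A}_{K^{n+\gamma_{j}}}^{-1}\boldsymbol{\omega}^{n+\gamma_{j}}],v^{*}\big)_{\refE}$. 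Substituting this together with the inductive hypothesis into \eqref{FullDiscreteALE-DGMethod2} and pulling out the constant $c$, the right-hand side becomes $c\big(\sum_{j=0}^{i-1}[\alpha_{ij}J_{K^{n,j}}+\beta_{ij}\triangle t(\nabla_{\boldsymbol{\xi}}\cdot[\mathbf{A}_{K^{n+\gamma_{j}}}^{-1}\boldsymbol{\omega}^{n+\gamma_{j}}])J_{K^{n,j}}],v^{*}\big)_{\refE}$, which by the metric recursion \eqref{DiscreteTimeDerivativeJacobian2} equals precisely $c\left(J_{K^{n,i}},v^{*}\right)_{\refE}$. Hence $\left(J_{K^{n,i}}(u_{h}^{n,i,*}-c),v^{*}\right)_{\refE}=0$ for every $v^{*}\in P^{k}(\refE)$, and since $J_{K^{n,i}}$ is a nonzero constant on $\refE$ I conclude $u_{h}^{n,i,*}=c$, closing the induction.

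I expect the main obstacle to be the careful justification of this ``stage-Jacobian'' version of \Cref{RHS} and the bookkeeping that the factor $\nabla_{\boldsymbol{\xi}}\cdot[\mathbf{A}_{K^{n+\gamma_{j}}}^{-1}\boldsymbol{\omega}^{n+\gamma_{j}}]$ arising from $\mathcal{G}$ is the \emph{same} factor that appears in \eqref{DiscreteTimeDerivativeJacobian2}; this exact coupling of the two Runge--Kutta sweeps is what makes the telescoping work and is the true content of the theorem. Positivity of the stage Jacobians, which I use in the final de-weighting step, is ensured by assumption (A2) together with an SSP-type step-size restriction.

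A final remark on the order hypothesis: the stage-wise constancy established above holds for \emph{any} admissible TVD-RK scheme and does not by itself require the order to be at least $d$. Rather, the order condition is what guarantees, via \eqref{AdditionalODECondition}, that $J_{K^{n+1}}=J_{K^{n,s}}$ coincides with the true volume $d!\left|K(t_{n+1})\right|$, so that the resulting constant state $u_{h}^{n+1,*}=u_{h}^{n,s,*}=c$ is geometrically consistent with the mesh $\mathcal{T}_{(t_{n+1})}$ and the full D-GCL of \Cref{Def:D-GCL1} is recovered.
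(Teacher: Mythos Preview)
Your proposal is correct and follows essentially the same route as the paper: an induction on the stage index, invoking \Cref{RHS} to reduce $\mathcal{G}(c,v^{*},J_{K^{n,j}},t_{n+\gamma_{j}})$ to the divergence term, and then matching the resulting sum against the metric recursion \eqref{DiscreteTimeDerivativeJacobian2} to force $u_{h}^{n,i,*}=c$. If anything you are slightly more careful than the paper in two respects: you explicitly justify why \Cref{RHS} continues to hold when the third argument is the stage value $J_{K^{n,j}}$ rather than the exact $J_{K(t)}$, and you correctly single out that the order-$\geq d$ hypothesis is not used for the stage-wise constancy itself but only to make $J_{K^{n,s}}$ coincide with the true $J_{K(t_{n+1})}$ via \eqref{AdditionalODECondition}.
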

\begin{proof}
Let $i\in\left\{ 0,\dots,s\right\}$ be an arbitrary fixed index and $v\in \mathcal{V}_{h}(t)$ be an arbitrary test function. We are interested to investigate the $i$-th Runge-Kutta stage in the \textbf{Problem \ref{FullDiscreteALE-DGMethod}}. Hence, we can assume that $u_{h}^{n,j,*}=c$ for all $j=0,\dots,s-1$. Then, the equation \eqref{RHS1} in \Cref{RHS} provides
\begin{align}\label{GCL1}
\begin{split}
\left(J_{K^{n,i}}u_{h}^{n,i,*},v^{*}\right)_{\refE}
=&
\quad \sum_{j=0}^{i-1}\alpha_{ij}\left(J_{K^{n,j}}c,v^{*}\right)_{\refE} \\
&+\sum_{j=0}^{i-1}\beta_{ij}\triangle t\left(J_{K^{n,j}}\nabla_{\boldsymbol{\xi}}\cdot\left[\mathrm{{\bf A}}_{K^{n+\gamma_{j}}}^{-1}\Big(\boldsymbol{\omega}^{n+\gamma_{j}}c\Big)\right],v^{*}\right)_{\refE}.
\end{split}
\end{align}
Next, we multiply the equation \eqref{DiscreteTimeDerivativeJacobian2} by $cv^{*}$ and integrate the result over the reference element $\refE$. This provides the identity
\begin{align}\label{GCL2}
\begin{split}
\left(J_{K^{n,i}}c,v^{*}\right)_{\refE}
=&
\quad \sum_{j=0}^{i-1}\alpha_{ij}\left(J_{K^{n,j}}c,v^{*}\right)_{\refE} \\
&+\sum_{j=0}^{i-1}\beta_{ij}\triangle t\left(J_{K^{n,j}}\nabla_{\boldsymbol{\xi}}\cdot\left[\mathrm{{\bf A}}_{K^{n+\gamma_{j}}}^{-1}\Big(\boldsymbol{\omega}^{n+\gamma_{j}}c\Big)\right],v^{*}\right)_{\refE}.
\end{split}
\end{align}
Since the metric terms $\left\{ J_{K^{n,j}}\right\} _{j=0}^{i-1}$ in \eqref{FullDiscreteALE-DGMethod2} are computed by \eqref{DiscreteTimeDerivativeJacobian}, the equation \eqref{GCL2} can be plugged in the equation \eqref{GCL1} and it follows
\begin{equation*}
\left(J_{K^{n,i}}u_{h}^{n,i,*},v^{*}\right)_{\refE}=\left(J_{K^{n,i}}c,v^{*}\right)_{\refE}.
\end{equation*}
Thus, it follows $u_{h}^{n,i,*}=c$, since the metric term $J_{K^{n,i}}$ is merely time-dependent and the test function $v\in \mathcal{V}_{h}(t)$ was chosen arbitrary.
\end{proof}
\noindent
\Cref{GCL} provides that constant initial data is preserved in each Runge-Kutta stage. In particular, a direct consequence of the \Cref{GCL} is the following result.
\begin{corollary}
Suppose a s-stage TVD-RK method with order greater than or equal to $d$ is used in \eqref{DiscreteTimeDerivativeJacobian} and \textbf{Problem \ref{FullDiscreteALE-DGMethod}}. Furthermore, the solutions $\left\{ J_{K^{n,i}}\right\} _{i=0}^{s}$ given by \eqref{DiscreteTimeDerivativeJacobian} are used to compute the metric terms in \textbf{Problem \ref{FullDiscreteALE-DGMethod}}. Then the fully-discrete ALE-DG method satisfies the D-GCL in the sense of the \Cref{Def:D-GCL1}.
\end{corollary}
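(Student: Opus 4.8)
The plan is to obtain the statement directly from \Cref{GCL} by translating its per-stage, reference-cell conclusion into the physical-domain statement required by \Cref{Def:D-GCL1}. First I would note that the hypothesis $u_{h}\left(\mathbf{x},t_{n}\right)=c$ for all $\mathbf{x}\in\Omega$ is equivalent, cell by cell, to $u_{h}^{n,*}=c$ on the reference element $\refE$: since each ALE mapping $\boldsymbol{\chi}_{K\left(t\right)}$ is affine linear, it carries constant functions to constant functions and vice versa. This places us exactly under the hypothesis of \Cref{GCL}.

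Because the $s$-stage TVD-RK method has order at least $d$, the metric terms $\left\{ J_{K^{n,i}}\right\} _{i=0}^{s}$ produced by \eqref{DiscreteTimeDerivativeJacobian} integrate the ODE \eqref{TimeDerivativeJacobian} exactly, so that \eqref{AdditionalODECondition} holds and in particular $J_{K^{n+1}}=J_{K^{n,s}}=d!\left|K\left(t_{n+1}\right)\right|$. \Cref{GCL} then yields $u_{h}^{n,i,*}=c$ for every stage $i=0,\dots,s$, and in particular $u_{h}^{n,s,*}=c$. Next I would invoke the final update in \textbf{Problem \ref{FullDiscreteALE-DGMethod}}, namely $\left(J_{K^{n+1}}u_{h}^{n+1,*},v^{*}\right)_{\refE}=\left(J_{K^{n,s}}u_{h}^{n,s,*},v^{*}\right)_{\refE}$ for all $v\in\mathcal{V}_{h}(t)$. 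Substituting $u_{h}^{n,s,*}=c$ together with $J_{K^{n+1}}=J_{K^{n,s}}$ gives $\left(J_{K^{n+1}}\left(u_{h}^{n+1,*}-c\right),v^{*}\right)_{\refE}=0$ for every test function $v^{*}\in P^{k}\left(\refE\right)$. Since $J_{K^{n+1}}>0$ by assumption (A2) and depends only on time, choosing $v^{*}=u_{h}^{n+1,*}-c$ forces $u_{h}^{n+1,*}=c$ on $\refE$. Mapping back through $\boldsymbol{\chi}_{K\left(t_{n+1}\right)}$ recovers $u_{h}\left(\mathbf{x},t_{n+1}\right)=c$ on each cell, hence on all of $\Omega$, which is precisely the conclusion of \Cref{Def:D-GCL1}.

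I do not expect a serious obstacle here, as the entire analytic content is already supplied by \Cref{GCL}; what remains is the bookkeeping needed to pass from the stagewise reference-cell identities to the physical-domain statement of the D-GCL. The one point deserving care is the role of the order condition on the RK method: it is exactly what guarantees, via \eqref{AdditionalODECondition}, that the metric term $J_{K^{n+1}}$ generated by the scheme coincides with the true Jacobian $d!\left|K\left(t_{n+1}\right)\right|$ of the mesh $\mathcal{T}_{\left(t_{n+1}\right)}$. Without this consistency the constant state would be propagated onto an inconsistent geometry, and the implication in \Cref{Def:D-GCL1} could fail; with it, the argument closes cleanly.
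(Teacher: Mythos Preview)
Your proposal is correct and matches the paper's approach: the paper simply states that the corollary is ``a direct consequence of \Cref{GCL}'' without giving a separate proof, and you have spelled out precisely this deduction. One small remark: the identity $J_{K^{n+1}}=J_{K^{n,s}}$ that you use in the final update is already a definition from \eqref{DiscreteTimeDerivativeJacobian3}, so the order condition is not needed at that step; its role is exactly what you identify afterward, namely ensuring via \eqref{AdditionalODECondition} that this computed value agrees with the true Jacobian $d!\left|K\left(t_{n+1}\right)\right|$ so that the scheme is consistent with the mesh $\mathcal{T}_{\left(t_{n+1}\right)}$.
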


\subsection{The discrete maximum principle}\label{TheDiscreteMaximumPrinciple}
In this section, we investigate the fully-discrete ALE-DG method \textbf{Problem \ref{FullDiscreteALE-DGMethod}} in two dimensions. First, we review the bound preserving limiter which was developed by Zhang, Xia and Shu in \cite{Zhang2012}.
Then, we consider the second and third order accurate TVD-RK methods given in the Tables \ref{tab:TVDRK2} and \ref{tab:TVDRK3}. We prove that for these methods the fully-discrete ALE-DG method satisfies the maximum principle when the bound preserving limiter is applied.

\subsubsection{The bound preserving limiter}\label{pplimiter}
In the following, we briefly review the bound preserving limiter methodology for an arbitrary $s$-stage TVD-RK method \eqref{DiscreteODE}. Let $u_{h}^{n}$ be the DG solution at time level $t=t_{n}$, $K\subseteq\Omega$ an arbitrary cell and $Q_{K}\subseteq K$ be a set of quadrature points. The corresponding quadrature formula needs to be exact for the integration of polynomials with degree $k$ on the cell $K$ and the set $Q_{K}$ needs to contain all quadrature points which are necessary to evaluate the surface integrals along the edges of $K$ in the DG method. A quadrature rule with these properties has been developed in \cite{Zhang2012} and will be presented in the Section \ref{QuadratureRule}. Then the cell average
\begin{equation*}
\overline{u}_{K}^{n}:=\frac{1}{\left|K\right|}\left(u_{h},1\right)_{K}
\end{equation*}
of the numerical solution $u_{h}$ can be computed exact by the quadrature formula. The limiter methodology is applied in two steps:
\begin{enumerate}
\item[(L1)] It needs to be ensured that the $s$-stage TVD-RK method satisfies
\begin{equation}\label{almostMP}
\begin{cases}
 & u_{h}^{n,0}|_{Q_{K}}=u_{h}^{n}|_{Q_{K}},\\
 & \text{for }\ell=1,\dots,s: \\
  & u_{h}^{n,j}|_{Q_{K}}\in\left[m,M\right], \quad\forall j=0,...,\ell-1,
 \quad\Rightarrow\quad\overline{u}_{K}^{n,\ell}\in\left[m,M\right],\\
 & \overline{u}_{K}^{n+1}=\overline{u}_{K}^{n,s},
\end{cases}
\end{equation}
where $m$ as well as $M$ are given by \eqref{bounds}, $u_{h}^{n,\ell}$ is the solution given by the $\ell$-th stage of the RK-DG method and $\overline{u}_{K}^{n,\ell}$ denotes the corresponding cell average with respect to the cell $K$.
\item[(L2)] If (L1) is satisfied, the solution $u_{h}$ will be revised for all $\ell=1,...,s$ by
\begin{equation}\label{Limiter}
\widetilde{u}_{h}^{n,\ell}|_{K}:=\Theta\left(u_{h}^{n,\ell}|_{K}-\overline{u}_{K}^{n,\ell}\right)+\overline{u}_{K}^{n,\ell},
\end{equation}
\begin{equation*}
\Theta:=\min\left\{ \left|\frac{M-\overline{u}_{K}^{n,\ell}}{M_{K}-\overline{u}_{K}^{n,\ell}}\right|,\left|\frac{m-\overline{u}_{K}^{n,\ell}}{m_{K}-\overline{u}_{K}^{n,\ell}}\right|,1\right\},
\end{equation*}
where $M_{K}:=\underset{{\bf x}\in Q_{K}}{\max}\,u_{h}^{n,\ell}\left({\bf x}\right)$, $m_{K}:=\underset{{\bf x}\in Q_{K}}{\min}\,u_{h}^{n,\ell}\left({\bf x}\right)$. Then it is ensured that $\widetilde{u}_{h}^{n,\ell}|_{K}\in\left[m,M\right]$, for all $\ell=1,...,s$.
\end{enumerate}
The stability property (L1) is the maximum principle property for the cell-averages of the numerical solution $u_{h}$. We note that by an adjustment of the CFL constraint the stability property (L1) is satisfied for any high order TVD-RK method, if the forward Euler step satisfies (L1), since the TVD-RK methods are convex combinations of the forward Euler step (cf. Gottlieb and Shu \cite{Gottlieb1998}). However, the forward Euler method is merely first order accurate and thus this method is not an appropriate choice for the discretization of the ODE \eqref{TimeDerivativeJacobian} and the semi-discrete ALE-DG method \textbf{Problem \ref{ALE-DG2Method}} in two dimensions, since it does not provide that the equation \eqref{AdditionalODECondition} holds. Therefore, the convexity argument in Gottlieb and Shu \cite{Gottlieb1998} cannot be used to analyze a fully-discrete ALE-DG method which has a high order TVD-RK method as time integrator and uses the bound preserving limiter. For this reason, each high order TVD-RK method needs to be investigated separately.

\subsubsection{High order time discretization methods}\label{HighOrderMethods}
The step (L1) in the bound preserving limiter methodology is related to the  cell averages of the numerical solution. Thus, a scheme satisfied by the cell averages of the ALE-DG solution $u_{h}$ needs to be investigated. In this section, we will present these schemes for the TVD-RK2 and the TVD-RK3 ALE-DG methods. Later, in the Section \ref{MPsecondorder} and \ref{MPthirdorder}, these schemes will be used to prove that the TVD-RK2 and the TVD-RK3 ALE-DG methods satisfy indeed the maximum principle, when the bound preserving limiter \eqref{Limiter} is applied.  \\
\noindent
{\bf\textit{The TVD-RK2 ALE-DG method:}}
We note that by the identity \eqref{Jacobian} the metric term $J_{K(t)}$ can be also written as $2\left|K\left(t\right)\right|$. Thus, for the TVD-RK2 method in Table \ref{tab:TVDRK2} the scheme \eqref{DiscreteTimeDerivativeJacobian} becomes
\begin{subequations}\label{2O:discreteODE1}
\begin{align}
\left|K^{n,1}\right|=&\left|K^{n}\right|+\triangle t\left(\nabla_{\boldsymbol{\xi}}\cdot\left[\mathrm{{\bf A}}_{K^{n}}^{-1}\big(\boldsymbol{\omega}^{n}\big)\right]\right)\left|K^{n}\right|, \label{2O:discreteODE2} \\
\left|K^{n+1}\right|=&\frac{1}{2}\left|K^{n}\right|+\frac{1}{2}\left|K^{n,1}\right|+\frac{\triangle t}{2}\left(\nabla_{\boldsymbol{\xi}}\cdot\left[\mathrm{{\bf A}}_{K^{n+1}}^{-1}\big(\boldsymbol{\omega}^{n+1}\big)\right]\right)\left|K^{n,1}\right|. \label{2O:discreteODE3}
\end{align}
\end{subequations}
Next, we apply the TVD-RK2 method and the test function $v^*=1$ in \eqref{FullDiscreteALE-DGMethod1} and obtain
\begin{subequations}\label{TVDRK2}
\begin{align}
\left|K^{n,1}\right|\overline{u}_{K^{n,1}}^{n,1}
=&\left|K^{n}\right|\overline{u}_{K^{n}}^{n} +\triangle t\mathcal{G}\left(u_{h}^{n,*},1,J_{K^{n}},t_{n}\right), \label{TVDRK2A} \\
\left|K^{n+1}\right|\overline{u}_{K^{n+1}}^{n+1}
=&\frac{1}{2}\left|K^{n}\right|\overline{u}_{K^{n}}^{n}+\frac{1}{2}\left|K^{n,1}\right|\overline{u}_{K^{n,1}}^{n,1}
+\frac{\triangle t}{2}\mathcal{G}\left(u_{h}^{n,1,*},1,J_{K^{n,1}},t_{n}\right),  \label{TVDRK2B}
\end{align}
\end{subequations}
where $\overline{u}_{K^{n}}^{n}$, $\overline{u}_{K^{n,1}}^{n,1}$ and $\overline{u}_{K^{n+1}}^{n+1}$ are the cell average values of the ALE-DG solution $u_{h}$ and the quantities $\left|K^{n}\right|$, $\left|K^{n,1}\right|$, $\left|K^{n+1}\right|$ are computed by \eqref{2O:discreteODE1}. We note that $\left|K\left(t_{n+1}\right)\right|=\left|K^{n+1}\right|$, since in two space dimensions, the TVD-RK2 method in Table \ref{tab:TVDRK2} solves the ODE \eqref{TimeDerivativeJacobian} exactly. \\
\noindent
{\bf\textit{The TVD-RK3 ALE-DG method:}}
For the TVD-RK3 method in Table \ref{tab:TVDRK3} the method \eqref{DiscreteTimeDerivativeJacobian} becomes
\begin{subequations}\label{3O:discreteODE1}
\begin{align}
\left|K^{n,1}\right|=&\left|K^{n}\right|+\triangle t\left(\nabla_{\boldsymbol{\xi}}\cdot\left[\mathrm{{\bf A}}_{K^{n}}^{-1}\left(\boldsymbol{\omega}^{n}\right)\right]\right)\left|K^{n}\right|, \label{3O:discreteODE2} \\
\left|K^{n,2}\right|=&\frac{3}{4}\left|K^{n}\right|+\frac{1}{4}\left|K^{n,1}\right|+\frac{\triangle t}{4}\left(\nabla_{\boldsymbol{\xi}}\cdot\left[\mathrm{{\bf A}}_{K^{n+1}}^{-1}\left(\boldsymbol{\omega}^{n+1}\right)\right]\right)\left|K^{n,1}\right|, \label{3O:discreteODE3} \\
\left|K^{n+1}\right|=&\frac{1}{3}\left|K^{n}\right|+\frac{2}{3}\left|K^{n,2}\right|+\frac{2\triangle t}{3}\left(\nabla_{\boldsymbol{\xi}}\cdot\left[\mathrm{{\bf A}}_{K^{n+\frac{1}{2}}}^{-1}\left(\boldsymbol{\omega}^{n+\frac{1}{2}}\right)\right]\right)\left|K^{n,2}\right|,   \label{3O:discreteODE4}
\end{align}
\end{subequations}
where $t_{n+\frac{1}{2}}:=\frac{1}{2}\left(t_{n}+t_{n+1}\right)$ and $K^{n+\frac{1}{2}}=K\left(t_{n+\frac{1}{2}}\right)$. In two and three space dimensions, the third order TVD-RK3 method in Table \ref{tab:TVDRK3} solves the ODE \eqref{TimeDerivativeJacobian} exactly and thus $\left|K\left(t_{n+1}\right)\right|=\left|K^{n+1}\right|$.
Next, we apply the test function $v^*=1$ in \eqref{FullDiscreteALE-DGMethod1} and obtain the scheme
\begin{subequations}\label{TVDRK3}
\begin{align}
\left|K^{n,1}\right|\overline{u}_{K^{n,1}}^{n,1}=&\left|K^{n}\right|\overline{u}_{K^{n}}^{n} +\triangle t\mathcal{G}\left(u_{h}^{n,*},1,J_{K^{n}},t_{n}\right), \label{TVDRK3A} \\
\left|K^{n,2}\right|\overline{u}_{K^{n,2}}^{n,2}=& \frac{3}{4}\left|K^{n}\right|\overline{u}_{K^{n}}^{n}+\frac{1}{4}\left|K^{n,1}\right|\overline{u}_{K^{n,1}}^{n,1}+\frac{\triangle t}{4}\mathcal{G}\left(u_{h}^{n,1,*},1,J_{K^{n,1}},t_{n+1}\right), \label{TVDRK3B} \\
\left|K^{n+1}\right|\overline{u}_{K^{n+1}}^{n+1}=&\frac{1}{3}\left|K^{n}\right|\overline{u}_{K^{n}}^{n}+\frac{2}{3}\left|K^{n,2}\right|\overline{u}_{K^{n,2}}^{n,2}+\frac{2\triangle t}{3}\mathcal{G}\left(u_{h}^{n,2,*},1,J_{K^{n,2}},t_{n+\frac{1}{2}}\right), \label{TVDRK3C}
\end{align}
\end{subequations}
where the values $\overline{u}_{K^{n}}^{n}$, $\overline{u}_{K^{n,1}}^{n,1}$, $\overline{u}_{K^{n,2}}^{n,2}$ and $\overline{u}_{K^{n+1}}^{n+1}$ are the cell average values of the ALE-DG solution $u_{h}$ and the quantities $\left|K^{n}\right|$, $\left|K^{n,1}\right|$, $\left|K^{n,2}\right|$, $\left|K^{n+1}\right|$ are computed by \eqref{3O:discreteODE1}.

\subsubsection{A quadrature rule to decompose the cell average value}\label{QuadratureRule}
In order to prove that the second and the third order fully-discrete ALE-DG methods satisfy the maximum principle, when the ALE-DG solution is revised by the limiter \eqref{Limiter}, we proceed similar to the derivation by Zhang, Xia and Shu in \cite{Zhang2012}. For this task, we need to apply a special quadrature formula developed by Zhang, Xia and Shu to decompose the cell average values of the ALE-DG solution. In the following this quadrature formula is briefly reviewed.

First of all, it should be noted that in the implementation of the ALE-DG method the edge integrals are approximated  by a $k+1$-point $2k+1$ accurate Gauss quadrature formula. Hence, we obtain
\begin{align}\label{Aprox:EdgeIntegrals}
\begin{split}
& \left\langle \widehat{g}\left(\boldsymbol{\omega},u_{h}^{*,\text{int}_{\refE}},u_{h}^{*,\text{ext}_{\refE}},J_{K\left(t\right)}\tilde{\n}\left(t\right)\right),1\right\rangle _{\partial\refE} \\
\approx &
\sum_{\beta=1}^{k+1}\sum_{\nu=1}^{3}\sigma_{\beta}\widehat{g}\left(\boldsymbol{\omega}_{\nu,\beta},u_{\nu,\beta}^{*,\text{int}_{\refE}},u_{\nu,\beta}^{*,\text{ext}_{\refE}},J_{K\left(t\right)}\tilde{\n}_{F_{\refE}^{\nu}}\left(t\right)\right)\ell_{F_{\refE}^{\nu}},
\end{split}
\end{align}
where $\tilde{\n}\left(t\right)={\bf A}_{K\left(t\right)}^{-T}\n_{\refE}$, $\tilde{\n}_{F_{\refE}^{\nu}}\left(t\right)={\bf A}_{K\left(t\right)}^{-T}\n_{F_{\refE}^{\nu}}$, $F_{\refE}^{\nu}$, $\nu =1,2,3$, are the edges of the reference cell $\refE$, the corresponding normals and lengths of the edges are $\n_{F_{\refE}^{\nu}}$  as well as  $\ell_{F_{\refE}^{\nu}}$. Moreover, we denote by $u_{\nu,\beta}^{*,\text{int}_{\refE}}$ as well as $u_{\nu,\beta}^{*,\text{ext}_{\refE}}$ the values of the mapped ALE-DG solution $u_{h}^{*}:=u_{h}\circ\boldsymbol{\chi}_{K\left(t\right)}$ evaluated in the $\beta$-th Gauss quadrature point on the edge $F_{\refE}^{\nu}$. The corresponding quadrature weights for the interval $\left[-\frac{1}{2},\frac{1}{2}\right]$ are $\sigma_{\beta}$. Likewise, $\boldsymbol{\omega}_{\nu,\beta}$ are the values of the grid velocity $\boldsymbol{\omega}$ evaluated in the $\beta$-th Gauss quadrature point on the edge $F_{\refE}^{\nu}$. We obtain for a constant $c\in\R$
\begin{equation}\label{Aprox:EdgeIntegralsGridvelocity}
\begin{split}
\left\langle \left(\boldsymbol{\omega}c\right)\cdot J_{K\left(t\right)}\tilde{\n}\left(t\right),1\right\rangle _{\partial\refE}=\sum_{\beta=1}^{k+1}\sum_{\nu=1}^{3}\sigma_{\beta}\left(\boldsymbol{\omega}_{\nu,\beta}c\right)\cdot\Big(J_{K(t)}\tilde{\n}_{F_{\refE}^{\nu}}\left(t\right)\Big)\ell_{F_{\refE}^{\nu}},
\end{split}
\end{equation}
since the edge integrals are approximated by a $2k+1$ accurate Gauss quadrature formula and the grid velocity belongs to the space  $P^{1}\left(\refE,\R^{d}\right)$ for all $t\in\left[t_{n},t_{n+1}\right]$ according to \Cref{PropertiesGridVelocity}. Thus, it follows for a constant $c\in\R$
\begin{align}\label{Aprox:EdgeIntegralsConstantState}
\begin{split}
& \sum_{\beta=1}^{k+1}\sum_{\nu=1}^{3}\sigma_{\beta}\widehat{g}\left(\boldsymbol{\omega}_{\nu,\beta},c,c,J_{K\left(t\right)}\tilde{\n}_{F_{\refE}^{\nu}}\left(t\right)\right)\ell_{F_{\refE}^{\nu}}\\
= &
\left\langle \left(\boldsymbol{f}\left(c\right)-\boldsymbol{\omega}c\right)\cdot(J_{K(t)}\tilde{\n}\left(t\right)),1\right\rangle _{\partial\refE}=\left\langle \boldsymbol{g}\left(\boldsymbol{\omega},c\right)\cdot(J_{K(t)}\tilde{\n}\left(t\right)),1\right\rangle _{\partial\refE},
\end{split}
\end{align}
since the numerical flux has the property (P1). Therefore, the statement of \Cref{GCL} stays true when we approximate the edge integrals in \eqref{FancyRHS} by a $2k+1$ accurate Gauss quadrature formula.

The cell average values need to be decomposed by a quadrature formula, which includes the Gauss quadrature points for the edges $F_{\refE}^{\nu}$, $\nu =1,2,3$. A quadrature formula with this property has been developed by Zhang et al. in \cite{Zhang2012}. Let us assume that $N$ is the smallest integer with $2N-3\geq k$. The $3(N-1)(k+1)$-point quadrature formula for triangular elements in \cite{Zhang2012}, has the properties:
\begin{itemize}
\item The quadrature formula is exact for the integration of polynomials with degree $k$ on a triangular element.
\item The quadrature points include the Gauss quadrature points for the edges $F_{\refE}^{\nu}$, $\nu =1,2,3$.
\item All the quadrature weights are positive and the weights for the Gauss quadrature points are given by
\begin{equation}\label{qua:weights}
\sigma_{\beta}\hat{\sigma}:=\frac{2}{3}\sigma_{\beta}\tilde{\sigma},\quad\tilde{\sigma}=\frac{1}{N(N-1)},
\quad\beta=1,...,k+1,
\end{equation}
where $\tilde{\sigma}$ corresponds to the 1-st and N-th Gauss-Lobatto quadrature weights for the interval $\left[-\frac{1}{2},\frac{1}{2}\right]$.
\item The quadrature formula has $L:=3(N-2)(k+1)$ points in the interior of a triangular element.
\end{itemize}
This quadrature formula ensures that the cell average values can be written as follows
\begin{equation}\label{cellaverage}
\overline{u}_{K\left(t\right)}=\sum_{\nu=1}^{3}\sum_{\beta=1}^{k+1}\sigma_{\beta}\hat{\sigma}u_{\nu,\beta}^{*,\text{int}_{\refE}}+\sum_{\gamma=1}^{L}\widetilde{\sigma}_{\gamma}u_{\gamma}^{*,\text{int}_{\refE}}.
\end{equation}
We denote by $u_{\gamma}^{*,\text{int}_{\refE}}$, $\gamma=1,...,L$, the values of the mapped ALE-DG solution $u_{h}^{*}:=u_{h}\circ\boldsymbol{\chi}_{K\left(t\right)}$ evaluated in the quadrature points which are lying in the interior of the reference cell $\refE$. The corresponding quadrature weights are denoted by $\widetilde{\sigma}_{\gamma}$.

\subsubsection{The maximum principle for the TVD-RK2 ALE-DG method}\label{MPsecondorder}
In this section, we prove that the TVD-RK2 ALE-DG method satisfies the maximum principle, when the ALE-DG solution is revised by the bound preserving limiter \eqref{Limiter}.

First of all, we show the property (L1). Therefore, similar to the process in \cite{Zhang2012}, we decompose the stages of the scheme \eqref{TVDRK2} in a sum of monotone increasing functions, which preserve constant states.
Therefore, it is convenient to use vector notations. In particular, we define the set
\[
\mathbb{M}:=\left\{ {\bf v}=\left({\bf v}_{1},{\bf v}_{2},{\bf v}_{3}\right):\ {\bf v}_{\nu}=\left(v_{\nu,1},...,v_{\nu,k+1}\right)\in\left[m,M\right]^{k+1},\ \forall\nu=1,2,3\right\}
\]
with $m$ and $M$ given by \eqref{bounds} and apply for any cell $K\left(t\right)\in\mathcal{T}_{\left(t\right)}$ and all $\nu=1,2,3$ the vector notations
\begin{subequations}\label{vectornotation}
\begin{equation}
{\bf u}_{\nu}^{*,\text{int}_{\refE}}:=\left(u_{\nu,1}^{*,\text{int}_{\refE}},\dots u_{\nu,k+1}^{*,\text{int}_{\refE}}\right), \quad
{\bf u}_{\nu}^{*,\text{ext}_{\refE}}:=\left(u_{\nu,1}^{*,\text{ext}_{\refE}},\dots u_{\nu,k+1}^{*,\text{ext}_{\refE}}\right), \label{vectornotation1}
\end{equation}
\begin{equation}
{\bf \widetilde{u}}^{*,\text{int}_{\refE}}:=\left(\widetilde{u}_{1}^{*,\text{int}_{\refE}},\dots,\widetilde{u}_{L}^{*,\text{int}_{\refE}}\right), \label{vectornotation2}
\end{equation}
\begin{equation}
{\bf u}^{*,\text{int}_{\refE}}:=\left({\bf u}_{1}^{*,\text{int}_{\refE}},{\bf u}_{2}^{*,\text{int}_{\refE}},{\bf u}_{3}^{*,\text{int}_{\refE}}\right), \quad
{\bf u}^{*,\text{ext}_{\refE}}:=\left({\bf u}_{1}^{*,\text{ext}_{\refE}},{\bf u}_{2}^{*,\text{ext}_{\refE}},{\bf u}_{3}^{*,\text{ext}_{\refE}}\right). \label{vectornotation3}
\end{equation}
\end{subequations}
Then, by applying the decomposition \eqref{cellaverage} of the cell average values, the scheme  \eqref{TVDRK2} can be written as
\begin{subequations}\label{E2:TVDRK2}
\begin{align}
\overline{u}_{K^{n,1}}^{n,1}=&\mathcal{L}\left({\bf \widetilde{u}}^{n,*,\text{int}_{\refE}},{\bf u}^{n,*,\text{int}_{\refE}},{\bf u}^{n,*,\text{ext}_{\refE}},\left|K^{n,1}\right|,\left|K^{n}\right|,t_{n}\right), \label{E2A:TVDRK2} \\
\overline{u}_{K^{n+1}}^{n+1}=&\frac{1}{2}\mathcal{H}\left({\bf \widetilde{u}}^{n,*,\text{int}_{\refE}},{\bf u}^{n,*,\text{int}_{\refE}},\left|K^{n+1}\right|,\left|K^{n}\right|\right) \nonumber  \\
&+\frac{1}{2}\mathcal{L}\left({\bf \widetilde{u}}^{n,1,*,\text{int}_{\refE}},{\bf u}^{n,1,*,\text{int}_{\refE}},{\bf u}^{n,1,*,\text{ext}_{\refE}},\left|K^{n+1}\right|,\left|K^{n,1}\right|,t_{n+1}\right), \label{E2B:TVDRK2}
\end{align}
\end{subequations}
where for all ${\bf a}\in\left[m,M\right]^{L}$ and ${\bf b},{\bf c}\in\mathbb{M}$
\begin{subequations}\label{VectorTVDRK2}
\begin{align}
\mathcal{H}\left({\bf a},{\bf b},\left|K_{1}\right|,\left|K_{2}\right|\right)
:=&\sum_{\gamma=1}^{L}\widetilde{\sigma}_{\gamma}\left(1-\frac{1}{\left|K_{1}\right|}\left(\left|K_{1}\right|-\left|K_{2}\right|\right)\right)a_{\gamma} \nonumber  \\
&+\sum_{\beta=1}^{k+1}\sum_{\nu=1}^{3}\sigma_{\beta}\hat{\sigma}\left(1-\frac{1}{\left|K_{1}\right|}\left(\left|K_{1}\right|-\left|K_{2}\right|\right)\right)b_{\nu,\beta}, \label{HVectorTVDRK2} \\
& \nonumber \\
\mathcal{L}\left({\bf a},{\bf b},{\bf c},\left|K_{1}\right|,\left|K_{2}\right|,t\right)
:=& \sum_{\gamma=1}^{L}\widetilde{\sigma}_{\gamma}\left(1-\frac{1}{\left|K_{1}\right|}\left(\left|K_{1}\right|-\left|K_{2}\right|\right)\right)a_{\gamma} \nonumber  \\
& +\sum_{\beta=1}^{k+1}\sigma_{\beta}\hat{\sigma}H_{1}\left(b_{1,\beta},b_{2,\beta},c_{1,\beta},\left|K_{1}\right|,\left|K_{2}\right|,t\right)  \nonumber  \\
&+\sum_{\beta=1}^{k+1}\sigma_{\beta}\hat{\sigma}H_{2}\left(b_{1,\beta},b_{2,\beta},b_{3,\beta},c_{2,\beta},\left|K_{1}\right|,\left|K_{2}\right|,t\right) \nonumber  \\
& +\sum_{\beta=1}^{k+1}\sigma_{\beta}\hat{\sigma}H_{3}\left(b_{2,\beta},b_{3,\beta},c_{3,\beta},\left|K_{1}\right|,\left|K_{2}\right|,t\right) \label{LVectorTVDRK2}
\end{align}
with
\begin{align}
H_{1}\left(b_{1,\beta},b_{2,\beta},c_{1,\beta},\left|K_{1}\right|,\left|K_{2}\right|,t\right)
=& \left(1-\frac{1}{\left|K_{1}\right|}\left(\left|K_{1}\right|-\left|K_{2}\right|\right)\right)b_{1,\beta} \nonumber  \\
& -\frac{\triangle t}{\hat{\sigma}\left|K_{1}\right|}\widehat{g}\left(\boldsymbol{\omega}_{1,\beta},b_{1,\beta},c_{1,\beta},J_{K_{2}}{\bf A}_{K\left(t\right)}^{-T}\n_{F_{\refE}^{1}}\right)\ell_{F_{\refE}^{1}}
\nonumber  \\
& +\frac{\triangle t}{\hat{\sigma}\left|K_{1}\right|}\widehat{g}\left(\boldsymbol{\omega}_{1,\beta},b_{2,\beta},b_{1,\beta},J_{K_{2}}{\bf A}_{K\left(t\right)}^{-T}\n_{F_{\refE}^{1}}\right)\ell_{F_{\refE}^{1}}, \label{H1VectorTVDRK2} \\
& \nonumber \\
H_{2}\left(b_{1,\beta},b_{2,\beta},b_{3,\beta},c_{2,\beta},\left|K_{1}\right|,\left|K_{2}\right|,t\right)
=& \left(1-\frac{1}{\left|K_{1}\right|}\left(\left|K_{1}\right|-\left|K_{2}\right|\right)\right)b_{2,\beta} \nonumber  \\
& -\frac{\triangle t}{\hat{\sigma}\left|K_{1}\right|}\widehat{g}\left(\boldsymbol{\omega}_{1,\beta},b_{2,\beta},b_{1,\beta},J_{K_{2}}{\bf A}_{K\left(t\right)}^{-T}\n_{F_{\refE}^{1}}\right)\ell_{F_{\refE}^{1}}\nonumber  \\
& -\frac{\triangle t}{\hat{\sigma}\left|K_{1}\right|}\widehat{g}\left(\boldsymbol{\omega}_{2,\beta},b_{2,\beta},c_{2,\beta},J_{K_{2}}{\bf A}_{K\left(t\right)}^{-T}\n_{F_{\refE}^{2}}\right)\ell_{F_{\refE}^{2}}\nonumber  \\
&-\frac{\triangle t}{\hat{\sigma}\left|K_{1}\right|}\widehat{g}\left(\boldsymbol{\omega}_{3,\beta},b_{2,\beta},b_{3,\beta},J_{K_{2}}{\bf A}_{K\left(t\right)}^{-T}\n_{F_{\refE}^{3}}\right)\ell_{F_{\refE}^{3}}, \label{H2VectorTVDRK2} \\
& \nonumber \\
H_{3}\left(b_{2,\beta},b_{3,\beta},c_{3,\beta},\left|K_{1}\right|,\left|K_{2}\right|,t\right)
=& \left(1-\frac{1}{\left|K_{1}\right|}\left(\left|K_{1}\right|-\left|K_{2}\right|\right)\right)b_{3,\beta} \nonumber  \\
&-\frac{\triangle t}{\hat{\sigma}\left|K_{1}\right|}\widehat{g}\left(\boldsymbol{\omega}_{3,\beta},b_{3,\beta},c_{3,\beta},J_{K_{2}}{\bf A}_{K\left(t\right)}^{-T}\n_{F_{\refE}^{3}}\right)\ell_{F_{\refE}^{3}}\nonumber  \\
&+\frac{\triangle t}{\hat{\sigma}\left|K_{1}\right|}\widehat{g}\left(\boldsymbol{\omega}_{3,\beta},b_{2,\beta},b_{3,\beta},J_{K_{2}}{\bf A}_{K\left(t\right)}^{-T}\n_{F_{\refE}^{3}}\right)\ell_{F_{\refE}^{3}}. \label{H3VectorTVDRK2}
\end{align}
\end{subequations}

In the Section \ref{QuadratureRule}, we mentioned that the result in \Cref{GCL} also holds when the edge integrals in \eqref{FancyRHS} are approximated by a $2k+1$ accurate Gauss quadrature formula. Therefore for a vector $\left({\bf c},{\bf c},{\bf c}\right)\in\left[m,M\right]^{L}\cup\mathbb{M}$ with vector components given by the constant $c\in \left[m,M\right]$, we obtain by the \Cref{GCL}
\begin{equation}\label{LVectorConstant}
\mathcal{L}\left({\bf c},{\bf c},{\bf c},\left|K^{n,1}\right|,\left|K^{n}\right|,t_{n}\right)=c,
\end{equation}
\begin{equation}\label{HandLVectorConstant}
\frac{1}{2}\mathcal{H}\left({\bf c},{\bf c},\left|K^{n+1}\right|,\left|K^{n}\right|\right)+\frac{1}{2}\mathcal{L}\left({\bf c},{\bf c},{\bf c},\left|K^{n+1}\right|,\left|K^{n,1}\right|,t_{n+1}\right)=c.
\end{equation}
Henceforth, for the sake of simplicity, we will use the Lax-Friedrichs flux \eqref{Lax-Friedrichs} for the analysis. However, it should be noted that the techniques which are presented in this section can be also applied to any other monotone flux. The use of the Lax-Friedrichs flux ensures to prove the following lemmas.

\begin{lemma}\label{MonotoneTVDRK2b}
Let $\left|K^{n}\right|$, $\left|K^{n,1}\right|$, $\left|K^{n+1}\right|$ be given by \eqref{2O:discreteODE1} and ${\bf a}\in\left[m,M\right]^{L}$, ${\bf b},{\bf c}\in\mathbb{M}$. Then, under the CFL constraint
\begin{equation}\label{CFL1}
\max\left\{ \hat{\sigma}\left|\left(\nabla_{\boldsymbol{\xi}}\cdot\left[\mathrm{{\bf A}}_{K(t)}^{-1}\big(\boldsymbol{\omega}\left(t\right)\big)\right]\right)\right|\left|K\left(t\right)\right|+\sum_{\nu=1}^{3}\lambda^{n}\ell_{F_{\refE}^{\nu}}:\ t\in\left[t_{n},t_{n+1}\right]\right\} \frac{\triangle t}{\left|K^{n,1}\right|}\leq\hat{\sigma}
\end{equation}
with $\lambda^{n}$ given by \eqref{lambda}, it holds
\begin{equation}\label{LVectorBounds}
m\leq \mathcal{L}\left({\bf a},{\bf b},{\bf c},\left|K^{n,1}\right|,\left|K^{n}\right|,t_{n}\right)\leq M.
\end{equation}
\end{lemma}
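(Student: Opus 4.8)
The plan is to derive the two-sided bound \eqref{LVectorBounds} from two ingredients: the constant-preservation identity \eqref{LVectorConstant} and the monotonicity of the map $\mathcal{L}$ defined in \eqref{LVectorTVDRK2} with respect to each of its scalar arguments $a_{\gamma}$, $b_{\nu,\beta}$ and $c_{\nu,\beta}$. Once monotonicity is established, I would evaluate \eqref{LVectorConstant} at the two extreme constant states $c=m$ and $c=M$, which both lie in $[m,M]$, to get $\mathcal{L}=m$ and $\mathcal{L}=M$ respectively; since every component of ${\bf a}\in[m,M]^{L}$ and ${\bf b},{\bf c}\in\mathbb{M}$ lies between $m$ and $M$, the non-decreasing dependence of $\mathcal{L}$ on each argument sandwiches $\mathcal{L}\left({\bf a},{\bf b},{\bf c},\left|K^{n,1}\right|,\left|K^{n}\right|,t_{n}\right)$ between $m$ and $M$, which is exactly \eqref{LVectorBounds}.

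The monotonicity in the arguments $a_{\gamma}$ and $c_{\nu,\beta}$ is immediate and needs no CFL restriction. In \eqref{LVectorTVDRK2} each $a_{\gamma}$ carries the coefficient $\widetilde{\sigma}_{\gamma}\left|K^{n}\right|/\left|K^{n,1}\right|$, which is nonnegative because both volumes are positive by (A2). Each exterior value $c_{\nu,\beta}$ enters $\mathcal{L}$ only through a single term of the form $-\tfrac{\triangle t}{\hat{\sigma}\left|K^{n,1}\right|}\widehat{g}(\cdot,\cdot,c_{\nu,\beta},\cdot)\ell_{F_{\refE}^{\nu}}$ (the second state slot of the numerical flux), and since $\widehat{g}$ is decreasing in that slot by (P3), this term is non-decreasing in $c_{\nu,\beta}$.

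The crux is the monotonicity in the interior trace values $b_{\nu,\beta}$, where the CFL constraint and the Lax--Friedrichs structure enter. For fixed $\nu,\beta$ I would collect every contribution to $\mathcal{L}$ that depends on $b_{\nu,\beta}$: the value appears once with the positive mass coefficient $\left|K^{n}\right|/\left|K^{n,1}\right|$ (in $H_{\nu}$), once in the diagonal flux $\widehat{g}(\boldsymbol{\omega}_{\nu,\beta},b_{\nu,\beta},c_{\nu,\beta},\cdot)$, and in two coupling fluxes shared with the neighbouring faces. The key observation is that the coupling-flux derivatives cancel pairwise across $H_{1},H_{2},H_{3}$; for instance the term $+\tfrac{\triangle t}{\hat{\sigma}\left|K^{n,1}\right|}\widehat{g}(\boldsymbol{\omega}_{1,\beta},b_{2,\beta},b_{1,\beta},\cdot)\ell_{F_{\refE}^{1}}$ in \eqref{H1VectorTVDRK2} matches, with opposite sign, the identical flux evaluation in \eqref{H2VectorTVDRK2}, so differentiation in $b_{1,\beta}$ annihilates the pair. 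Consequently the partial derivative of $\mathcal{L}$ in $b_{\nu,\beta}$ collapses to
\begin{equation*}
\sigma_{\beta}\hat{\sigma}\left(\frac{\left|K^{n}\right|}{\left|K^{n,1}\right|}-\frac{\triangle t}{\hat{\sigma}\left|K^{n,1}\right|}\,\partial_{u_{1}}\widehat{g}\big(\boldsymbol{\omega}_{\nu,\beta},b_{\nu,\beta},c_{\nu,\beta},J_{K^{n}}{\bf A}_{K^{n}}^{-T}\n_{F_{\refE}^{\nu}}\big)\,\ell_{F_{\refE}^{\nu}}\right),
\end{equation*}
where $\partial_{u_{1}}\widehat{g}$ denotes the derivative in the first state slot. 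For the Lax--Friedrichs flux \eqref{Lax-Friedrichs}, the identity \eqref{g+-} gives $\partial_{u_{1}}\widehat{g}=\tfrac{1}{2}(\lambda^{n}+\partial_{u}\boldsymbol{g}\cdot\n)\in[0,\lambda^{n}]$ by the definition \eqref{lambda} of $\lambda^{n}$, so it suffices to guarantee $\hat{\sigma}\left|K^{n}\right|\geq\triangle t\,\lambda^{n}\,\ell_{F_{\refE}^{\nu}}$ for each $\nu$.

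Finally I would check that \eqref{CFL1} delivers exactly this per-face inequality. Using the first metric stage \eqref{2O:discreteODE2}, namely $\left|K^{n,1}\right|-\left|K^{n}\right|=\triangle t\,(\nabla_{\boldsymbol{\xi}}\cdot[{\bf A}_{K^{n}}^{-1}\boldsymbol{\omega}^{n}])\left|K^{n}\right|$, the geometric term $\hat{\sigma}\left|\nabla_{\boldsymbol{\xi}}\cdot[{\bf A}_{K(t)}^{-1}\boldsymbol{\omega}(t)]\right|\left|K(t)\right|$ in \eqref{CFL1}, evaluated at $t=t_{n}$, equals $\hat{\sigma}\big|\left|K^{n,1}\right|-\left|K^{n}\right|\big|/\triangle t$; substituting this into \eqref{CFL1} and splitting into the expanding case $\left|K^{n,1}\right|\geq\left|K^{n}\right|$ and the contracting case $\left|K^{n,1}\right|<\left|K^{n}\right|$ reduces \eqref{CFL1} to $\triangle t\sum_{\nu}\lambda^{n}\ell_{F_{\refE}^{\nu}}\leq\hat{\sigma}\left|K^{n}\right|$ in the first case and to a strictly stronger bound in the second, either of which implies $\hat{\sigma}\left|K^{n}\right|\geq\triangle t\,\lambda^{n}\,\ell_{F_{\refE}^{\nu}}$ for every $\nu$. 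I expect the main obstacle to be precisely this bookkeeping: tracking which of $H_{1},H_{2},H_{3}$ each $b_{\nu,\beta}$ enters, verifying the pairwise cancellation of the coupling fluxes, and correctly absorbing the grid-motion contribution through the case distinction so that the CFL constraint stated at the volume $\left|K^{n,1}\right|$ matches the per-face requirement stated at the volume $\left|K^{n}\right|$.
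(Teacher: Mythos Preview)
Your overall strategy---establish that $\mathcal{L}$ is non-decreasing in every scalar argument and then sandwich via the constant-preservation identity \eqref{LVectorConstant}---is exactly the paper's strategy, and your treatment of the $a_{\gamma}$ and $c_{\nu,\beta}$ variables is correct and in fact cleaner than the paper's (you observe directly that the coefficient of $a_{\gamma}$ is $\widetilde{\sigma}_{\gamma}\,|K^{n}|/|K^{n,1}|\ge 0$, whereas the paper takes a detour through \eqref{2O:discreteODE2} and the CFL bound to reach the same conclusion).

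Where you differ is in the monotonicity in $b_{\nu,\beta}$. The paper does not exploit the cancellation you describe; instead it rewrites each $H_{\nu}$ separately using the Lax--Friedrichs splitting $\widehat{g}=\widehat{g}_{+}-\widehat{g}_{-}$ and shows that each of $H_{1},H_{2},H_{3}$ is increasing in \emph{all} of its own arguments. The worst case is $\partial_{b_{2,\beta}}H_{2}$, which picks up the full sum $\sum_{\nu=1}^{3}\lambda^{n}\ell_{F_{\refE}^{\nu}}$, so the CFL constraint \eqref{CFL1} is used exactly as stated. Your route---observing that the auxiliary fluxes $\widehat{g}(\boldsymbol{\omega}_{1,\beta},b_{2,\beta},b_{1,\beta},\cdot)$ and $\widehat{g}(\boldsymbol{\omega}_{3,\beta},b_{2,\beta},b_{3,\beta},\cdot)$ cancel identically in the sum $H_{1}+H_{2}+H_{3}$---is correct and structurally sharper: after cancellation only the single diagonal flux on face $\nu$ survives in $\partial_{b_{\nu,\beta}}\mathcal{L}$, so you need only the per-face bound $\hat{\sigma}\,|K^{n}|\ge\triangle t\,\lambda^{n}\ell_{F_{\refE}^{\nu}}$. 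The price is the extra bookkeeping you anticipate: you must then argue (via \eqref{2O:discreteODE2} and the case split on the sign of $|K^{n,1}|-|K^{n}|$) that the CFL condition \eqref{CFL1}, which is stated with the full sum and with the volume $|K^{n,1}|$, indeed implies this per-face condition at volume $|K^{n}|$. Your case analysis does this correctly. In short, the paper's argument is more direct in matching the CFL hypothesis, while yours isolates the essential dependence of $\mathcal{L}$ on the interior traces and would in principle permit a slightly weaker CFL restriction.
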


\begin{proof}
First of all, we apply the Lax-Friedrichs flux \eqref{Lax-Friedrichs} and rewrite the functions \eqref{H1VectorTVDRK2}, \eqref{H2VectorTVDRK2} and \eqref{H3VectorTVDRK2} as follows
\begin{align*}
& H_{1}\left(b_{1,\beta},b_{2,\beta},c_{1,\beta},\left|K_{1}\right|,\left|K_{2}\right|,t\right) \nonumber \\
=& \left(1-\frac{1}{\left|K_{1}\right|}\left(\left|K_{1}\right|-\left|K_{2}\right|\right)-\frac{\triangle t}{\hat{\sigma}\left|K_{1}\right|}\lambda^{n}\ell_{F_{\refE}^{1}}\right)b_{1,\beta} \nonumber \\
&+\frac{\triangle t}{\hat{\sigma}\left|K_{1}\right|}\left(\widehat{g}_{+}\left(\boldsymbol{\omega}_{1,\beta},b_{2,\beta},J_{K_{2}}\tilde{\n}_{F_{\refE}^{1}}\left(t\right)\right)+\widehat{g}_{-}\left(\boldsymbol{\omega}_{1,\beta},c_{1,\beta},J_{K_{2}}\tilde{\n}_{F_{\refE}^{1}}\left(t\right)\right)\right)\ell_{F_{\refE}^{1}}, \nonumber \\
& \nonumber \\
& H_{2}\left(b_{1,\beta},b_{2,\beta},b_{3,\beta},c_{2,\beta},\left|K_{1}\right|,\left|K_{2}\right|,t\right)\nonumber \\
=&\left(1-\frac{1}{\left|K_{1}\right|}\left(\left|K_{1}\right|-\left|K_{2}\right|\right)\right)b_{2,\beta}-\frac{\triangle t}{\hat{\sigma}\left|K_{1}\right|}\sum_{\nu=1}^{3}\widehat{g}_{+}\left(\boldsymbol{\omega}_{\nu,\beta},b_{2,\beta},J_{K_{2}}\tilde{\n}_{F_{\refE}^{\nu}}\left(t\right)\right)\ell_{F_{\refE}^{\nu}} \nonumber \\
&+\frac{\triangle t}{\hat{\sigma}\left|K_{1}\right|}\widehat{g}_{-}\left(\boldsymbol{\omega}_{1,\beta},b_{1,\beta},J_{K_{2}}\tilde{\n}_{F_{\refE}^{1}}\left(t\right)\right)\ell_{F_{\refE}^{1}}\nonumber 
+\frac{\triangle t}{\hat{\sigma}\left|K_{1}\right|}\widehat{g}_{-}\left(\boldsymbol{\omega}_{2,\beta},c_{2,\beta},J_{K_{2}}\tilde{\n}_{F_{\refE}^{2}}\left(t\right)\right)\ell_{F_{\refE}^{2}} \nonumber \\
&+\frac{\triangle t}{\hat{\sigma}\left|K_{1}\right|}\widehat{g}_{-}\left(\boldsymbol{\omega}_{3,\beta},b_{3,\beta},J_{K_{2}}\tilde{\n}_{F_{\refE}^{3}}\left(t\right)\right)\ell_{F_{\refE}^{3}}, \nonumber \\
& \nonumber \\
& H_{3}\left(b_{2,\beta},b_{3,\beta},c_{3,\beta},\left|K_{1}\right|,\left|K_{2}\right|,t\right) \nonumber \\
=& \left(1-\frac{1}{\left|K_{1}\right|}\left(\left|K_{1}\right|-\left|K_{2}\right|\right)-\frac{\triangle t}{\hat{\sigma}\left|K_{1}\right|}\lambda^{n}\ell_{F_{\refE}^{3}}\right)b_{3,\beta} \nonumber \\
&+\frac{\triangle t}{\hat{\sigma}\left|K_{1}\right|}\left(\widehat{g}_{+}\left(\boldsymbol{\omega}_{3,\beta},b_{2,\beta},J_{K_{2}}\tilde{\n}_{F_{\refE}^{3}}\left(t\right)\right)+\widehat{g}_{-}\left(\boldsymbol{\omega}_{3,\beta},c_{3,\beta},J_{K_{2}}\tilde{\n}_{F_{\refE}^{3}}\left(t\right)\right)\right)\ell_{F_{\refE}^{3}},
\end{align*}
where $\tilde{\n}_{F_{\refE}^{\nu}}\left(t\right):={\bf A}_{K\left(t\right)}^{-T}\n_{F_{\refE}^{\nu}}$, $\nu=1,2,3$.

Next, we observe that for all $t\in\left[0,T\right]$, $\nu=1,2,3$ and $\beta=1,...,k+1$
\begin{equation*}
\left|\partial_{b_{2,\beta}}\widehat{g}_{+}\left(\boldsymbol{\omega}_{\nu,\beta},b_{2,\beta},J_{K\left(t\right)}\tilde{\n}_{F_{\refE}^{\nu}}\left(t\right)\right)\right|\leq\lambda^{n},
\end{equation*}
where $\lambda^{n}$ is given by \eqref{lambda}. Hence, we obtain by \eqref{2O:discreteODE2} as well as the CFL constraint \eqref{CFL1}
for  $\beta=1,...,k+1$ 
\begin{align}\label{H2VectorIncreasing}
\begin{split}
& \partial_{b_{2,\beta}}H_{2}\left(b_{1,\beta},b_{2,\beta},b_{3,\beta},c_{2,\beta},\left|K^{n,1}\right|,\left|K^{n}\right|,t_{n}\right) \\
\geq&  1-\frac{\triangle t}{\hat{\sigma}\left|K^{n,1}\right|}\left(\hat{\sigma}\left|\nabla_{\boldsymbol{\xi}}\cdot\left[\mathrm{{\bf A}}_{K^{n}}^{-1}\Big(\boldsymbol{\omega}^{n}\Big)\right]\right|\left|K^{n}\right|+\sum_{\nu=1}^{3}\lambda^{n}\ell_{F_{\refE}^{\nu}}\right)\geq0,
\end{split}
\end{align}
and similar we get
\begin{equation}\label{H1VectorIncreasing}
\partial_{b_{1,\beta}}H_{1}\left(b_{1,\beta},b_{2,\beta},c_{1,\beta},\left|K^{n,1}\right|,\left|K^{n}\right|,t_{n}\right)\geq0,
\end{equation}
\begin{equation}\label{H3VectorIncreasing}
\partial_{b_{3,\beta}}H_{3}\left(b_{2,\beta},b_{3,\beta},c_{3,\beta},\left|K^{n,1}\right|,\left|K^{n}\right|,t_{n}\right)\geq0.
\end{equation}

In the following, we highlight by the symbolic notation $\uparrow$ that a function is increasing in the marked arguments. Likewise, we apply the notation $\boldsymbol{\uparrow}$ to highlight that a function with vector arguments increases in each vector component. Then, it follows by \eqref{H2VectorIncreasing}, \eqref{H1VectorIncreasing} and \eqref{H3VectorIncreasing}
\begin{equation*}
H_{1}\left(\uparrow,\uparrow,\uparrow,\left|K^{n,1}\right|,\left|K^{n}\right|,t_{n}\right), \
H_{2}\left(\uparrow,\uparrow,\uparrow,\uparrow,\left|K^{n,1}\right|,\left|K^{n}\right|,t_{n}\right), \
H_{3}\left(\uparrow,\uparrow,\uparrow,\left|K^{n,1}\right|,\left|K^{n}\right|,t_{n}\right),
\end{equation*}
since $\widehat{g}_{\pm}\left(\boldsymbol{\omega},\uparrow,J_{K\left(t\right)}\tilde{\n}\left(t\right)\right)$ for all $t\in\left[t_{n},t_{n+1}\right]$ and all cells $K\left(t\right)\in\mathcal{T}_{\left(t\right)}$.
Therefore, we obtain
\begin{equation}\label{LVectorIncreasing}
\mathcal{L}\left(\boldsymbol{\uparrow},\boldsymbol{\uparrow},\boldsymbol{\uparrow},\left|K^{n,1}\right|,\left|K^{n}\right|,t_{n}\right),
\end{equation}
since for all $\gamma =1,...,L$, it follows by \eqref{2O:discreteODE2} and the CFL constraint \eqref{CFL1}
\begin{equation*}
\partial_{a_{\gamma}}\mathcal{L}\left({\bf a},{\bf b},{\bf c},\left|K^{n,1}\right|,\left|K^{n}\right|,t_{n}\right)
\geq\widetilde{\sigma}_{\gamma}\left(1-\frac{\triangle t}{\left|K^{n,1}\right|}\left|\nabla_{\boldsymbol{\xi}}\cdot\left[\mathrm{{\bf A}}_{K^{n}}^{-1}\Big(\boldsymbol{\omega}^{n}\Big)\right]\right|\left|K^{n}\right|\right)\geq0.
\end{equation*}
Finally, the equation \eqref{LVectorConstant} and \eqref{LVectorIncreasing} provide the inequality \eqref{LVectorBounds}.
\end{proof}

\begin{lemma}\label{MonotoneTVDRK2c}
Let $\left|K^{n}\right|$, $\left|K^{n,1}\right|$, $\left|K^{n+1}\right|$ be given by \eqref{2O:discreteODE1} and ${\bf a},\widetilde{{\bf a}}\in\left[m,M\right]^{L}$, ${\bf b}, \widetilde{{\bf b}},\widetilde{{\bf c}}\in\mathbb{M}$. Then, under the CFL constraint
\begin{equation}\label{CFL2}
\max\left\{ \hat{\sigma}\left|\nabla_{\boldsymbol{\xi}}\cdot\left[\mathrm{{\bf A}}_{K(t)}^{-1}\big(\boldsymbol{\omega}\left(t\right)\big)\right]\right|\left|K\left(t\right)\right|+\sum_{\nu=1}^{3}\lambda^{n}\ell_{F_{\refE}^{\nu}}:\ t\in\left[t_{n},t_{n+1}\right]\right\} \frac{\triangle t}{\left|K^{n+1}\right|}\leq\hat{\sigma}
\end{equation}
with $\lambda^{n}$ given by \eqref{lambda}, we have
\begin{equation}\label{HandLVectorBounds}
m\leq\frac{1}{2}\mathcal{H}\left({\bf a},{\bf b},\left|K^{n+1}\right|,\left|K^{n}\right|\right)+\frac{1}{2}\mathcal{L}\left(\widetilde{{\bf a}},\widetilde{{\bf b}},\widetilde{{\bf c}},\left|K^{n+1}\right|,\left|K^{n,1}\right|,t_{n+1}\right)\leq M.
\end{equation}
\end{lemma}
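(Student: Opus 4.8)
The plan is to reproduce the two-step argument of \Cref{MonotoneTVDRK2b}: show that the combined map $(\widetilde{\bf a},\widetilde{\bf b},\widetilde{\bf c},{\bf a},{\bf b})\mapsto \tfrac12\mathcal{H}+\tfrac12\mathcal{L}$ is monotone increasing in each of its vector components, and then read off the bounds from the constant-state identity \eqref{HandLVectorConstant}. Indeed, since \eqref{HandLVectorConstant} states that this convex combination returns $c$ whenever all five vector arguments are set to a constant $c\in[m,M]$, monotonicity immediately sandwiches the value: replacing every argument by $m$ (respectively $M$) can only decrease (respectively increase) the output, and by \eqref{HandLVectorConstant} those extreme outputs equal exactly $m$ and $M$. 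This yields \eqref{HandLVectorBounds}.

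For $\mathcal{H}({\bf a},{\bf b},|K^{n+1}|,|K^{n}|)$ monotonicity is immediate and CFL-free: by \eqref{HVectorTVDRK2} every coefficient is a positive quadrature weight times $1-\tfrac{1}{|K^{n+1}|}(|K^{n+1}|-|K^{n}|)=\tfrac{|K^{n}|}{|K^{n+1}|}$, which is positive by assumption (A2). The effort thus falls on $\mathcal{L}(\widetilde{\bf a},\widetilde{\bf b},\widetilde{\bf c},|K^{n+1}|,|K^{n,1}|,t_{n+1})$. Here I would insert the Lax--Friedrichs flux \eqref{Lax-Friedrichs} into \eqref{H1VectorTVDRK2}--\eqref{H3VectorTVDRK2} and rewrite $H_{1},H_{2},H_{3}$ verbatim as at the start of the proof of \Cref{MonotoneTVDRK2b}, so that each $H_{j}$ splits into a diagonal term carrying the factor $\tfrac{|K^{n,1}|}{|K^{n+1}|}$ plus increasing one-sided fluxes $\widehat{g}_{\pm}$. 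The off-diagonal dependence is then increasing for free from properties (P2) and (P3), and the interior-point coefficients $\widetilde{\sigma}_{\gamma}\tfrac{|K^{n,1}|}{|K^{n+1}|}$ are positive by (A2), so only the diagonal derivatives require the CFL.

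The main obstacle is the sign of those diagonal derivatives, e.g.
\[
\partial_{b_{2,\beta}}H_{2}\;\geq\;\frac{\left|K^{n,1}\right|}{\left|K^{n+1}\right|}-\frac{\triangle t}{\hat{\sigma}\left|K^{n+1}\right|}\sum_{\nu=1}^{3}\lambda^{n}\ell_{F_{\refE}^{\nu}},
\]
together with the analogues for $H_{1}$ and $H_{3}$; after clearing the positive factor $\hat{\sigma}|K^{n+1}|$ these all reduce to the single scalar inequality $\hat{\sigma}|K^{n,1}|\geq\triangle t\sum_{\nu=1}^{3}\lambda^{n}\ell_{F_{\refE}^{\nu}}$. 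The delicacy is that the CFL constraint \eqref{CFL2} carries $|K^{n+1}|$ in its denominator, whereas the second Runge--Kutta stage produces the coefficient $|K^{n,1}|$. To bridge this I would exploit the discrete volume evolution \eqref{2O:discreteODE1}: in two dimensions this scheme integrates the quadratic $|K(t)|$ exactly, so that $(\nabla_{\boldsymbol{\xi}}\cdot[{\bf A}_{K(t)}^{-1}\boldsymbol{\omega}(t)])|K(t)|$ is precisely the (linear-in-time) time derivative of $|K(t)|$, the forward-Euler stage \eqref{2O:discreteODE2} is its tangent value at $t_{n+1}$, and \eqref{2O:discreteODE3} reconstructs $|K^{n+1}|=|K(t_{n+1})|$. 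Comparing the tangent value $|K^{n,1}|$ with $|K^{n+1}|$ through the secant bounds the defect by $\triangle t\max_{t\in[t_{n},t_{n+1}]}|(\nabla_{\boldsymbol{\xi}}\cdot[{\bf A}_{K(t)}^{-1}\boldsymbol{\omega}(t)])||K(t)|$, and feeding this into \eqref{CFL2} delivers exactly $\hat{\sigma}|K^{n,1}|\geq\triangle t\sum_{\nu}\lambda^{n}\ell_{F_{\refE}^{\nu}}$.

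With the diagonal derivatives nonnegative, the same reasoning as in \Cref{MonotoneTVDRK2b} shows each $H_{j}$, hence $\mathcal{L}$, is increasing in all its vector slots; combined with the increasing $\mathcal{H}$ and the identity \eqref{HandLVectorConstant} this gives \eqref{HandLVectorBounds}. I expect the volume-bridging estimate of the previous paragraph to be the only genuinely new step relative to \Cref{MonotoneTVDRK2b}; the remainder is a line-by-line transcription with the stage data $(|K^{n,1}|,|K^{n}|,t_{n})$ replaced throughout by $(|K^{n+1}|,|K^{n,1}|,t_{n+1})$.
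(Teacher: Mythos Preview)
Your overall architecture matches the paper's exactly: establish that both $\mathcal{H}$ and $\mathcal{L}$ are monotone in all vector arguments, then conclude from the constant-state identity \eqref{HandLVectorConstant}. The comparison reduces to how each of you handles the two pieces.

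For $\mathcal{H}$, your route is cleaner than the paper's. You read off directly that every coefficient in \eqref{HVectorTVDRK2} equals a positive quadrature weight times $|K^{n}|/|K^{n+1}|>0$, so monotonicity is immediate and CFL-free. The paper instead substitutes the identity \eqref{2O:discreteODE5} for $|K^{n+1}|-|K^{n}|$ and then invokes the CFL bound \eqref{CFL2} to conclude nonnegativity --- a detour that is not actually needed, since the coefficient is already a ratio of positive volumes.

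For $\mathcal{L}$, the two arguments are essentially reformulations of each other. The paper stays purely algebraic: it records the identity \eqref{2O:discreteODE4} for $|K^{n+1}|-|K^{n,1}|$ (obtained by subtracting the two stage equations) and feeds it, together with \eqref{CFL2}, into the same diagonal-derivative computation as in \Cref{MonotoneTVDRK2b}. Your ``volume-bridging'' step is the geometric counterpart: interpreting $|K^{n,1}|$ as the tangent-line value and $|K^{n+1}|$ as the exact $|K(t_{n+1})|$, you bound the defect by $\triangle t\max_{t}\big|\nabla_{\boldsymbol{\xi}}\!\cdot[\mathrm{A}_{K(t)}^{-1}\boldsymbol{\omega}(t)]\big|\,|K(t)|$ and combine with \eqref{CFL2} to get $\hat{\sigma}|K^{n,1}|\ge\triangle t\sum_{\nu}\lambda^{n}\ell_{F_{\refE}^{\nu}}$. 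That one-sided defect bound is correct (most transparently by writing $|K^{n+1}|-|K^{n,1}|=\tfrac{\triangle t}{2}\big(|K|'(t_{n+1})-|K|'(t_{n})\big)$ and noting that half a difference of two numbers is dominated by the larger absolute value), but your ``secant'' wording is a bit loose; spelling it out via \eqref{2O:discreteODE4}, as the paper does, makes the step unambiguous and avoids leaning on the exactness claim. Either way, the remainder of the $\mathcal{L}$ argument is, as you say, a verbatim transcription of \Cref{MonotoneTVDRK2b} with the stage data shifted.
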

\begin{proof}
The equations \eqref{2O:discreteODE2} and \eqref{2O:discreteODE3} supply
\begin{equation}\label{2O:discreteODE4}
\left|K^{n+1}\right|-\left|K^{n,1}\right|=\frac{\triangle t}{2}\left(\nabla_{\boldsymbol{\xi}}\cdot\left[\mathrm{{\bf A}}_{K^{n+1}}^{-1}\Big(\boldsymbol{\omega}^{n+1}\Big)\right]\left|K^{n,1}\right|-\nabla_{\boldsymbol{\xi}}\cdot\left[\mathrm{{\bf A}}_{K^{n}}^{-1}\Big(\boldsymbol{\omega}^{n}\Big)\right]\left|K^{n}\right|\right).
\end{equation}
Thus, the same procedure as in the proof of \cref{MonotoneTVDRK2c} provides
\begin{equation}\label{HandLVectorIncreasing1}
\mathcal{L}\left(\boldsymbol{\uparrow},\boldsymbol{\uparrow},\boldsymbol{\uparrow},\left|K^{n+1}\right|,\left|K^{n,1}\right|,t_{n+1}\right)
\end{equation}
by applying the identity \eqref{2O:discreteODE4} and the CFL constraint \eqref{CFL2}. Furthermore, by \eqref{2O:discreteODE2} and \eqref{2O:discreteODE3} follows
\begin{equation}\label{2O:discreteODE5}
\left|K^{n+1}\right|-\left|K^{n}\right|
=
\frac{\triangle t}{2}\left(\nabla_{\boldsymbol{\xi}}\cdot\left[\mathrm{{\bf A}}_{K^{n}}^{-1}\Big(\boldsymbol{\omega}^{n}\Big)\right]\left|K^{n}\right|+\nabla_{\boldsymbol{\xi}}\cdot\left[\mathrm{{\bf A}}_{K^{n+1}}^{-1}\Big(\boldsymbol{\omega}^{n+1}\Big)\right]\left|K^{n,1}\right|\right).
\end{equation}
Hence, we obtain for all $\gamma =1,...,L$
\begin{align*}
& \partial_{a_{\gamma}}\mathcal{H}\left({\bf a},{\bf b}, \left|K^{n+1}\right|,\left|K^{n}\right|,t_{n+1}\right) \nonumber \\
\geq & \widetilde{\sigma}_{\gamma}\left(1-\frac{\triangle t}{2\left|K^{n+1}\right|}\left(\nabla_{\boldsymbol{\xi}}\cdot\left[\mathrm{{\bf A}}_{K^{n}}^{-1}\Big(\boldsymbol{\omega}^{n}\Big)\right]\left|K^{n}\right|+\left|\nabla_{\boldsymbol{\xi}}\cdot\left[\mathrm{{\bf A}}_{K^{n+1}}^{-1}\Big(\boldsymbol{\omega}^{n+1}\Big)\right]\right|\left|K^{n,1}\right|\right)\right)\geq0,
\end{align*}
by the identity \eqref{2O:discreteODE5} and the CFL constraint \eqref{CFL2}. In a similar way, it follows for all $\nu=1,2,3$ as well as $\beta =1,...,k+1$
\begin{equation*}
\partial_{b_{\nu,\beta}}\mathcal{H}\left({\bf a},\boldsymbol{b}, \left|K^{n+1}\right|,\left|K^{n}\right|,t_{n+1}\right)\geq0.
\end{equation*}
This ensures
\begin{equation}\label{HandLVectorIncreasing2}
\mathcal{H}\left(\boldsymbol{\uparrow},\boldsymbol{\uparrow},\left|K^{n+1}\right|,\left|K^{n}\right|\right).
\end{equation}
Therefore, the equation \eqref{HandLVectorConstant}, \eqref{HandLVectorIncreasing1} and \eqref{HandLVectorIncreasing2} supply the inequality \eqref{HandLVectorBounds}.
\end{proof}

We note that the assumption ($A3$) for the mesh parameter provides for all $t\in \left[0,T\right]$ and all cells $K\left(t\right)\in\mathcal{T}_{\left(t\right)}$
\begin{equation*}
\left|K\left(t\right)\right|\geq\pi\rho_{K\left(t\right)}^{2}\geq\frac{\pi}{\kappa^{2}}h_{K\left(t\right)}^{2}\geq\frac{\pi}{\tau^{2}\kappa^{2}}h^{2},
\end{equation*}
since $\rho_{K\left(t\right)}$ denotes the radius of the largest ball contained in the cell $K\left(t\right)$. Therefore, the CFL constraints \eqref{CFL1} and \eqref{CFL2} can be generalized as
\begin{equation}\label{globalCFL}
\max\left\{ \hat{\sigma}\left|\nabla_{\boldsymbol{\xi}}\cdot\left[\mathrm{{\bf A}}_{K(t)}^{-1}\big(\boldsymbol{\omega}\left(t\right)\big)\right]\right|\left|K\left(t\right)\right|+\sum_{\nu=1}^{3}\lambda^{n}\ell_{F_{\refE}^{\nu}}:\ t\in\left[t_{n},t_{n+1}\right]\right\} \frac{\triangle t}{h^{2}}\leq\frac{\pi\hat{\sigma}}{\tau^{2}\kappa^{2}}.
\end{equation}
Now, we apply the generalized CFL constraint and the previous lemmas to prove the maximum principle for the second order fully-discrete ALE-DG method.
\begin{theorem}\label{MPTVDRK2}
Suppose ${\bf \widetilde{u}}^{n,*,\text{int}_{\refE}}\in\left[m,M\right]^{L}\ensuremath{,}{\bf u}^{n,*,\text{int}_{\refE}},{\bf u}^{n,*,\text{ext}_{\refE}}\in\mathbb{M}$. Furthermore, ($A1$) - ($A3$) and the CFL constraint \eqref{globalCFL} are satisfied. Then, the solution $u_{h}^{n+1}$ of the second order fully-discrete ALE-DG method revised by Zhang, Xia and Shu's bound preserving limiter \eqref{Limiter} belongs to the interval $\left[m,M\right]$.
\end{theorem}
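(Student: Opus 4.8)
The plan is to propagate the bound $[m,M]$ through the two Runge--Kutta stages of the scheme \eqref{E2:TVDRK2}, using \Cref{MonotoneTVDRK2b} and \Cref{MonotoneTVDRK2c} to keep the cell averages inside $[m,M]$, and inserting the bound preserving limiter \eqref{Limiter} after each stage to upgrade each cell-average bound into a pointwise bound on the quadrature set $Q_{K}$. The mechanism driving the whole argument is that \eqref{Limiter} is a scaling of the numerical solution toward its own cell average: it leaves $\overline{u}_{K}^{n,\ell}$ unchanged and forces every nodal value into $[m,M]$ as soon as $\overline{u}_{K}^{n,\ell}\in[m,M]$, which is exactly the implication (L1)$\Rightarrow$(L2) recorded in \eqref{almostMP}.

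First I would treat the intermediate stage. By hypothesis the nodal data of $u_{h}^{n}$ satisfy ${\bf \widetilde{u}}^{n,*,\text{int}_{\refE}}\in[m,M]^{L}$ and ${\bf u}^{n,*,\text{int}_{\refE}},{\bf u}^{n,*,\text{ext}_{\refE}}\in\mathbb{M}$. Since the global CFL constraint \eqref{globalCFL} implies the cell-wise constraints \eqref{CFL1} and \eqref{CFL2}, \Cref{MonotoneTVDRK2b} applied to the representation \eqref{E2A:TVDRK2} gives $\overline{u}_{K^{n,1}}^{n,1}\in[m,M]$. With this cell-average bound available, the limiter step (L2) produces a revised stage solution $\widetilde{u}_{h}^{n,1}$ whose nodal values on $Q_{K}$ all lie in $[m,M]$; in the vector notation of \eqref{vectornotation} this reads ${\bf \widetilde{u}}^{n,1,*,\text{int}_{\refE}}\in[m,M]^{L}$ and ${\bf u}^{n,1,*,\text{int}_{\refE}},{\bf u}^{n,1,*,\text{ext}_{\refE}}\in\mathbb{M}$, while $\overline{u}_{K^{n,1}}^{n,1}$ and its bound are preserved.

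Next I would carry out the update to time level $t_{n+1}$. The limited stage-one data are precisely the admissible inputs required by \Cref{MonotoneTVDRK2c}: in the representation \eqref{E2B:TVDRK2} the $\mathcal{H}$-argument still uses the time-$n$ data, which is admissible by hypothesis, while the $\mathcal{L}$-argument uses the limited stage-one data. Hence, again using that \eqref{globalCFL} implies \eqref{CFL2}, \Cref{MonotoneTVDRK2c} yields $\overline{u}_{K^{n+1}}^{n+1}\in[m,M]$. A final application of the limiter \eqref{Limiter} to $u_{h}^{n+1}$ then forces every nodal value on $Q_{K}$ into $[m,M]$, which is the assertion of the theorem.

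The main obstacle is the bookkeeping at the intermediate stage. \Cref{MonotoneTVDRK2c} demands pointwise control at all quadrature nodes of the stage-one solution, including the exterior traces ${\bf u}^{n,1,*,\text{ext}_{\refE}}$ inherited from the neighboring cells, whereas \Cref{MonotoneTVDRK2b} alone delivers only a bound on the cell \emph{average} $\overline{u}_{K^{n,1}}^{n,1}$. This gap is bridged exactly by inserting the limiter between the two stages and by noting that it acts simultaneously on every cell, so the exterior traces are controlled as well. This is the reason the usual convexity reduction to a forward Euler step (Gottlieb and Shu \cite{Gottlieb1998}) is unavailable here and each stage must be examined separately, as anticipated in \Cref{pplimiter}; one should also verify the cell-average invariance of \eqref{Limiter}, so that limiting the stage-one solution does not destroy the bound just established for $\overline{u}_{K^{n,1}}^{n,1}$.
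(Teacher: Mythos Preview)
Your proposal is correct and follows essentially the same two-step argument as the paper: apply \Cref{MonotoneTVDRK2b} to bound the first-stage cell average, limit to recover pointwise bounds (including exterior traces via neighboring cells), then apply \Cref{MonotoneTVDRK2c} to bound $\overline{u}_{K^{n+1}}^{n+1}$ and limit once more. Your remarks on why the limiter must act between stages and simultaneously on all cells make explicit a point the paper leaves implicit.
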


\begin{proof}
Let $K^{n}\in\mathcal{T}_{\left(t_{n}\right)}$ be an arbitrary cell. The main part of the proof follows in two steps.

\textit{Step 1.}
Since, ${\bf \widetilde{u}}^{n,*,\text{int}_{\refE}}\in\left[m,M\right]^{L}\ensuremath{,}{\bf u}^{n,*,\text{int}_{\refE}},{\bf u}^{n,*,\text{ext}_{\refE}}\in\mathbb{M}$ and the CFL constraint \eqref{globalCFL} is satisfied, it follows $\overline{u}_{K^{n,1}}^{n,1}\in\left[m,M\right]$ by the inequality \eqref{LVectorBounds} in \cref{MonotoneTVDRK2b} and the equation \eqref{E2A:TVDRK2}. Hence, the bound preserving limiter \eqref{Limiter} ensures $u_{h}^{n,1}|_{K^{n,1}}\in\left[m,M\right]$.

\textit{Step 2.}
In the first step, it has been shown that $u_{h}^{n,1}|_{K^{n,1}}\in\left[m,M\right]$ when the bound preserving limiter \eqref{Limiter} was applied. Hence, ${\bf \widetilde{u}}^{n,1*,\text{int}_{\refE}}\in\left[m,M\right]^{L}$, ${\bf u}^{n,1,*,\text{int}_{\refE}},{\bf u}^{n,1,*,\text{ext}_{\refE}}\in\mathbb{M}$.
Thus, the inequality \eqref{HandLVectorBounds} in \cref{MonotoneTVDRK2c} and the equation \eqref{E2B:TVDRK2} supply $\overline{u}_{K^{n+1}}^{n+1}\in\left[m,M\right]$, since the CFL constraint \eqref{globalCFL} is satisfied. Finally, the bound preserving limiter \eqref{Limiter} ensures that $u_{h}^{n+1}|_{K^{n+1}}\in\left[m,M\right]$.

In a similar way, we proceed for any other cell in $\mathcal{T}_{\left(t_{n+1}\right)}$. Thus, it follows $u_{h}^{n+1}\in\left[m,M\right]$.
\end{proof}

\subsubsection{The maximum principle for the third order fully-discrete ALE-DG method}\label{MPthirdorder}
In this section, we show briefly how the result in \cref{MPTVDRK2} can be extended to the third order TVD-RK3 ALE-DG method. We apply the decomposition \eqref{cellaverage} of the cell average values and the vector notations \eqref{vectornotation} to rewrite the scheme \eqref{TVDRK3} as
\begin{align*}
\overline{u}_{K^{n,1}}^{n,1}=&\mathcal{L}\left({\bf \widetilde{u}}^{n,*,\text{int}_{\refE}},{\bf u}^{n,*,\text{int}_{\refE}},{\bf u}^{n,*,\text{ext}_{\refE}},\left|K^{n,1}\right|,\left|K^{n}\right|,t_{n}\right), \\ 
\overline{u}_{K^{n,2}}^{n,2}=& \frac{3}{4}\mathcal{H}\left({\bf \widetilde{u}}^{n,*,\text{int}_{\refE}},{\bf u}^{n,*,\text{int}_{\refE}},\left|K^{n,2}\right|,\left|K^{n}\right|\right) \nonumber  \\
&+\frac{1}{4}\mathcal{L}\left({\bf \widetilde{u}}^{n,1,*,\text{int}_{\refE}},{\bf u}^{n,1,*,\text{int}_{\refE}},{\bf u}^{n,1,*,\text{ext}_{\refE}},\left|K^{n,2}\right|,\left|K^{n,1}\right|,t_{n+1}\right),\\ 
\overline{u}_{K^{n+1}}^{n+1}=& \frac{1}{3}\mathcal{H}\left({\bf \widetilde{u}}^{n,*,\text{int}_{\refE}},{\bf u}^{n,*,\text{int}_{\refE}},\left|K^{n+1}\right|,\left|K^{n}\right|\right)\nonumber  \\
&  +\frac{2}{3}\mathcal{L}\left({\bf \widetilde{u}}^{n,2,*,\text{int}_{\refE}},{\bf u}^{n,2,*,\text{int}_{\refE}},{\bf u}^{n,2,*,\text{ext}_{\refE}},\left|K^{n+1}\right|,\left|K^{n,2}\right|,t_{n+\frac{1}{2}}\right). 
\end{align*}

By the \Cref{GCL} we obtain the identities
\begin{equation*}
\mathcal{L}\left({\bf c},{\bf c},{\bf c},\left|K^{n,1}\right|,\left|K^{n}\right|,t_{n}\right)=c,
\end{equation*}
\begin{equation*}
\frac{3}{4}\mathcal{H}\left({\bf c},{\bf c},\left|K^{n,2}\right|,\left|K^{n}\right|\right)+\frac{1}{4}\mathcal{L}\left({\bf c},{\bf c},{\bf c},\left|K^{n,2}\right|,\left|K^{n,1}\right|,t_{n+1}\right)=c,
\end{equation*}
\begin{equation*}
\frac{1}{3}\mathcal{H}\left({\bf c},{\bf c},\left|K^{n+1}\right|,\left|K^{n}\right|\right)+\frac{2}{3}\mathcal{L}\left({\bf c},{\bf c},{\bf c},\left|K^{n+1}\right|,\left|K^{n,2}\right|,t_{n+\frac{1}{2}}\right)=c,
\end{equation*}
where $\left({\bf c},{\bf c},{\bf c}\right)\in\left[m,M\right]^{L}\cup\mathbb{M}$ with vector components given by the constant $c\in \left[m,M\right]$. Moreover, like in the previous section, we apply the Lax-Friedrichs flux \eqref{Lax-Friedrichs}. Then, by the same argumentation as in the proof of the  \cref{MonotoneTVDRK2b} and \cref{MonotoneTVDRK2c}, we obtain the following lemma.
\begin{lemma}
Let $\left|K^{n}\right|$, $\left|K^{n,1}\right|$, $\left|K^{n,2}\right|$, $\left|K^{n+1}\right|$ be given by \eqref{3O:discreteODE1} and ${\bf a}, {\bf \widetilde{a}}\in\left[m,M\right]^{L}$, ${\bf b},{\bf c},{\bf \widetilde{b}},{\bf \widetilde{c}}\in\mathbb{M}$. Then, under the CFL constraint \eqref{globalCFL}, it holds
\begin{equation*}
m\leq\mathcal{L}\left({\bf a},{\bf b},{\bf c},\left|K^{n,1}\right|,\left|K^{n}\right|,t_{n}\right)\leq M,
\end{equation*}
\begin{equation*}
m\leq\frac{3}{4}\mathcal{H}\left({\bf a},{\bf b},\left|K^{n,2}\right|,\left|K^{n}\right|\right)+\frac{1}{4}\mathcal{L}\left(\widetilde{{\bf a}},\widetilde{{\bf b}},\widetilde{{\bf c}},\left|K^{n,2}\right|,\left|K^{n,1}\right|,t_{n+1}\right)\leq M,
\end{equation*}
\begin{equation*}
m\leq \frac{1}{3}\mathcal{H}\left({\bf a},{\bf b},\left|K^{n+1}\right|,\left|K^{n}\right|\right)+\frac{2}{3}\mathcal{L}\left(\widetilde{{\bf a}},\widetilde{{\bf b}},\widetilde{{\bf c}},\left|K^{n+1}\right|,\left|K^{n,2}\right|,t_{n+\frac{1}{2}}\right)\leq M.
\end{equation*}
\end{lemma}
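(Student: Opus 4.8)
The plan is to follow the templates of the proofs of \cref{MonotoneTVDRK2b} and \cref{MonotoneTVDRK2c} stage by stage, exploiting that each building block $\mathcal{L}$ and $\mathcal{H}$ is simultaneously monotone increasing in all of its vector arguments and exactly constant-preserving. For the first inequality there is nothing new to verify: the right-hand side is literally $\mathcal{L}\left({\bf a},{\bf b},{\bf c},\left|K^{n,1}\right|,\left|K^{n}\right|,t_{n}\right)$, and since \eqref{3O:discreteODE2} coincides with the first-stage volume update \eqref{2O:discreteODE2}, the argument of \cref{MonotoneTVDRK2b} applies verbatim once one notes that \eqref{globalCFL} majorizes \eqref{CFL1}.

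For the second and third inequalities I would first record the volume-difference identities analogous to \eqref{2O:discreteODE4} and \eqref{2O:discreteODE5}, now extracted from \eqref{3O:discreteODE1}. Subtracting stages yields $\left|K^{n,2}\right|-\left|K^{n,1}\right|$ and $\left|K^{n,2}\right|-\left|K^{n}\right|$ for the second stage, and $\left|K^{n+1}\right|-\left|K^{n,2}\right|$ and $\left|K^{n+1}\right|-\left|K^{n}\right|$ for the third stage, each a fixed linear combination of the three quantities $\triangle t\left(\nabla_{\boldsymbol{\xi}}\cdot\left[\mathrm{{\bf A}}_{K^{n}}^{-1}\boldsymbol{\omega}^{n}\right]\right)\left|K^{n}\right|$, $\triangle t\left(\nabla_{\boldsymbol{\xi}}\cdot\left[\mathrm{{\bf A}}_{K^{n+1}}^{-1}\boldsymbol{\omega}^{n+1}\right]\right)\left|K^{n,1}\right|$ and $\triangle t\left(\nabla_{\boldsymbol{\xi}}\cdot\left[\mathrm{{\bf A}}_{K^{n+\frac{1}{2}}}^{-1}\boldsymbol{\omega}^{n+\frac{1}{2}}\right]\right)\left|K^{n,2}\right|$. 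With these in hand, the diagonal (self-argument) coefficient $\frac{\left|K_{2}\right|}{\left|K_{1}\right|}=1-\frac{1}{\left|K_{1}\right|}\left(\left|K_{1}\right|-\left|K_{2}\right|\right)$ of each $H_{1},H_{2},H_{3}$ and of $\mathcal{H}$ can be combined with the Lax-Friedrichs bound $\left|\partial_{b}\widehat{g}_{\pm}\right|\leq\lambda^{n}$, exactly as in \eqref{H2VectorIncreasing}--\eqref{H3VectorIncreasing}, to confirm that every partial derivative $\partial_{a_{\gamma}}$ and $\partial_{b_{\nu,\beta}}$ is nonnegative under \eqref{globalCFL}. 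This delivers the monotonicity assertions $\mathcal{L}\left(\boldsymbol{\uparrow},\boldsymbol{\uparrow},\boldsymbol{\uparrow},\cdot\right)$ and $\mathcal{H}\left(\boldsymbol{\uparrow},\boldsymbol{\uparrow},\cdot\right)$ for each relevant volume pair.

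Once monotonicity is in place I would invoke the three constant-preservation identities displayed immediately before the lemma, which are the consequences of \cref{GCL} together with the $2k+1$-accurate edge quadrature. Because $\frac{3}{4}+\frac{1}{4}=1$ and $\frac{1}{3}+\frac{2}{3}=1$, each convex combination $\frac{3}{4}\mathcal{H}+\frac{1}{4}\mathcal{L}$ and $\frac{1}{3}\mathcal{H}+\frac{2}{3}\mathcal{L}$ is itself monotone increasing and sends $\left({\bf c},{\bf c},{\bf c}\right)$ to $c$. Inserting the componentwise bounds $m$ and $M$ and using monotonicity then squeezes each combination into $\left[m,M\right]$, which is precisely the two remaining inequalities; this is the same monotone-plus-constant-preserving squeeze used at the end of \cref{MonotoneTVDRK2b} and \cref{MonotoneTVDRK2c}.

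The main obstacle I anticipate is bookkeeping rather than conceptual: one must confirm that the single time-uniform constraint \eqref{globalCFL}, phrased as a maximum over $t\in\left[t_{n},t_{n+1}\right]$ against the common denominator $h^{2}$, simultaneously controls the three distinct ratios $\triangle t/\left|K^{n,1}\right|$, $\triangle t/\left|K^{n,2}\right|$ and $\triangle t/\left|K^{n+1}\right|$ that surface in the RK3 stages, with grid-velocity divergences evaluated at the instants $t_{n}$, $t_{n+\frac{1}{2}}$ and $t_{n+1}$. The lower bound $\left|K\left(t\right)\right|\geq\frac{\pi}{\tau^{2}\kappa^{2}}h^{2}$ from assumption (A3) supplies the needed majorization, but one still has to check that the intermediate-stage volumes $\left|K^{n,1}\right|$ and $\left|K^{n,2}\right|$ remain comparable to $\left|K\left(t\right)\right|$ so that every diagonal coefficient stays nonnegative; that comparison is where I would concentrate the care, the remainder being a direct transcription of the RK2 computations.
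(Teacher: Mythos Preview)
Your proposal is correct and follows exactly the route the paper indicates: the paper does not write out a proof of this lemma at all, stating only that it follows ``by the same argumentation as in the proof of'' \cref{MonotoneTVDRK2b} and \cref{MonotoneTVDRK2c}, which is precisely the stage-by-stage monotonicity-plus-constant-preservation argument you outline. The bookkeeping concern you raise---that the intermediate Runge--Kutta volumes $\left|K^{n,1}\right|$ and $\left|K^{n,2}\right|$ are not literally of the form $\left|K(t)\right|$ for some $t$, so the lower bound $\left|K(t)\right|\geq\frac{\pi}{\tau^{2}\kappa^{2}}h^{2}$ does not directly apply to them---is a genuine subtlety that the paper itself glosses over when passing from \eqref{CFL1}--\eqref{CFL2} to \eqref{globalCFL}; your instinct to concentrate care there is well placed, but it is not a gap relative to the paper's own argument.
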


This lemma provide the following analogue of the Theorem \ref{MPTVDRK2} for the second order fully-discrete ALE-DG method. The proof follows similar to the proof of Theorem \ref{MPTVDRK2}. Hence, it is skipped.
\begin{theorem}
Suppose ${\bf \widetilde{u}}^{n,*,\text{int}_{\refE}}\in\left[m,M\right]^{L}$, ${\bf u}^{n,*,\text{int}_{\refE}},{\bf u}^{n,*,\text{ext}_{\refE}}\in\mathbb{M}$. Furthermore, ($A1$) - ($A3$) and the CFL constraint \eqref{globalCFL} are satisfied. Then, the solution $u_{h}^{n+1}$ of the third order fully-discrete ALE-DG method revised by Zhang, Xia and Shu's bound preserving limiter \eqref{Limiter} belongs to the interval $\left[m,M\right]$.
\end{theorem}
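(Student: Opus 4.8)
The plan is to mirror the two-step argument used for \cref{MPTVDRK2}, extended to the three Runge--Kutta stages of the TVD-RK3 scheme \eqref{TVDRK3}. As in the second order case, the whole argument rests on two ingredients: the sum-of-monotone-functions decomposition of the cell-average updates (written in terms of the maps $\mathcal{L}$ and $\mathcal{H}$ via the quadrature decomposition \eqref{cellaverage}), and the bound preserving limiter \eqref{Limiter}, which restores the pointwise bounds on the quadrature set once the cell average is known to lie in $[m,M]$. I would fix an arbitrary cell $K^{n}\in\mathcal{T}_{(t_{n})}$ and then repeat the construction on every cell of $\mathcal{T}_{(t_{n+1})}$.

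First I would run an induction over the three stages. In Step~1 the hypotheses ${\bf \widetilde{u}}^{n,*,\text{int}_{\refE}}\in[m,M]^{L}$ and ${\bf u}^{n,*,\text{int}_{\refE}},{\bf u}^{n,*,\text{ext}_{\refE}}\in\mathbb{M}$, together with the first bound of the preceding lemma and the first line of the rewritten scheme, give $\overline{u}_{K^{n,1}}^{n,1}\in[m,M]$; the limiter \eqref{Limiter} then yields $u_{h}^{n,1}|_{K^{n,1}}\in[m,M]$, so that ${\bf \widetilde{u}}^{n,1,*,\text{int}_{\refE}}\in[m,M]^{L}$ and ${\bf u}^{n,1,*,\text{int}_{\refE}},{\bf u}^{n,1,*,\text{ext}_{\refE}}\in\mathbb{M}$. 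In Step~2 the stage-$0$ vectors are still bounded by hypothesis and the stage-$1$ vectors are bounded by Step~1, so the second bound of the preceding lemma applied to the second line of the scheme gives $\overline{u}_{K^{n,2}}^{n,2}\in[m,M]$, and the limiter produces $u_{h}^{n,2}|_{K^{n,2}}\in[m,M]$. Step~3 is identical in spirit: the $\mathcal{H}$-term in the last line of the scheme is evaluated at the unchanged, bounded stage-$0$ data ${\bf u}^{n,*,\text{int}_{\refE}}$, while the $\mathcal{L}$-term uses the stage-$2$ data bounded in Step~2, so the third bound forces $\overline{u}_{K^{n+1}}^{n+1}\in[m,M]$, and a final application of \eqref{Limiter} gives $u_{h}^{n+1}|_{K^{n+1}}\in[m,M]$. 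Since the cell was arbitrary, this establishes $u_{h}^{n+1}\in[m,M]$.

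The step I expect to be the main obstacle is not this structural induction but the preceding lemma it invokes, i.e.\ checking that the single CFL constraint \eqref{globalCFL} suffices to make $\mathcal{L}$ and $\mathcal{H}$ increasing in all of their state arguments at each of the three stages. The difference from the second order case is that four volume increments must now be controlled simultaneously. Following the pattern of \eqref{2O:discreteODE4} and \eqref{2O:discreteODE5}, I would subtract the appropriate lines of the discrete GCL ODE \eqref{3O:discreteODE1} to express $|K^{n,2}|-|K^{n,1}|$, $|K^{n,2}|-|K^{n}|$, $|K^{n+1}|-|K^{n,2}|$ and $|K^{n+1}|-|K^{n}|$ as $\triangle t$ times combinations of the grid-velocity divergences $\nabla_{\boldsymbol{\xi}}\cdot[{\bf A}_{K^{\bullet}}^{-1}\boldsymbol{\omega}^{\bullet}]$ weighted by the stage volumes. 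Each such increment, together with the Lax--Friedrichs splitting \eqref{Lax-Friedrichs} and the bound $|\partial_{u}\widehat{g}_{\pm}|\leq\lambda^{n}$, then reproduces the nonnegativity of the partial derivatives $\partial_{\cdot}H_{1},\partial_{\cdot}H_{2},\partial_{\cdot}H_{3}$ and $\partial_{a_{\gamma}}\mathcal{L},\partial_{a_{\gamma}}\mathcal{H}$ exactly as in \eqref{H2VectorIncreasing}--\eqref{H3VectorIncreasing}, provided the prefactor $\triangle t/|K^{n+1}|$ (and its $\tfrac{2}{3}$-weighted variant from the last line of \eqref{3O:discreteODE1}) stays below $\hat{\sigma}$. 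The care point is verifying that the worst of these four constants is still dominated by the right-hand side of \eqref{globalCFL}, where the lower bound $|K(t)|\geq\tfrac{\pi}{\tau^{2}\kappa^{2}}h^{2}$ furnished by (A3) converts the cell-wise constraints into the stated global one. Once the lemma is in place, the constant-preservation identities supplied by \cref{GCL} give $\mathcal{L}({\bf c},{\bf c},{\bf c},\cdot)=c$ and the analogous convex identities for $\mathcal{H}$, and the monotone-plus-constant structure delivers the three bounds, completing the argument.
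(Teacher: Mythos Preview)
Your proposal is correct and follows essentially the same approach as the paper: the paper explicitly skips this proof, stating only that it ``follows similar to the proof of Theorem~\ref{MPTVDRK2}'', and your three-stage induction using the preceding lemma and the limiter \eqref{Limiter} is precisely that analogue. Your additional discussion of how the preceding lemma is established (via the volume-increment identities derived from \eqref{3O:discreteODE1} together with the Lax--Friedrichs splitting, controlled by \eqref{globalCFL}) is likewise in line with the paper, which dispatches that lemma with ``by the same argumentation as in the proof of the \cref{MonotoneTVDRK2b} and \cref{MonotoneTVDRK2c}''.
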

\begin{remark}
It is also possible to apply other TVD-RK methods like the five stage fourth order method of Spiteri and Ruuth \cite{Spiteri} as time integrator in a fully-discrete ALE-DG method. Then, it can be proven by the techniques presented in this section that these fully-discrete ALE-DG methods satisfy the maximum principle.
\end{remark}

\section{Numerical experiments}
\label{sec:Numerical}
In this section, we demonstrate the performance of the ALE-DG method for conservation laws in two dimensions. In our simulation, the criss-triangular meshes are used. Furthermore, the third order TVD Runge-Kutta method is used in the first example (Example 5.1). In the other examples the five stage fourth order TVD Runge-Kutta method of Spiteri and Ruuth \cite{Spiteri} is used for the time discretization. We observe that for this high order approximation in time our theoretical results hold numerically, too. In order to avoid complications with the stability of the explicit time integrator, we apply the suitable CFL condition dependent on equation \eqref{globalCFL}.

To verify our theoretical results, we present numerical simulations for a linear advection equation and Burgers' equation. Moreover, to highlight that the ALE-DG method can be also used for systems of conservation laws, we present a plain wave problem and a smooth vortex problem for the compressible Euler equations with a polytropic gas.

In all the numerical simulations, we consider two moving mesh scenarios. First a static uniform criss-triangular mesh with cell size $h_0$ is used. Next, a moving mesh with the grid point distribution
\begin{align}\label{GridpointDistribution}
\begin{split}
&x_j(t_n)= x_j(0) + 0.3\sin\left(\frac{2\pi x_j(0)}{x_r-x_l}\right)\sin\left(\frac{2\pi y_j(0)}{y_r-y_l}\right)\sin(2\pi(t_n)/t_0),\\
&y_j(t_n)= y_j(0) + 0.2\sin\left(\frac{2\pi x_j(0)}{x_r-x_l}\right)\sin\left(\frac{2\pi y_j(0)}{y_r-y_l}\right)\sin(4\pi(t_n)/t_0)
\end{split}
\end{align}
is used. In equation \eqref{GridpointDistribution} the points $(x_j,y_j)$ are the vertices of the triangular mesh and $t_0=\sqrt{10^2+5^2}$. The vertices $(x_j(0),y_j(0))$ at initial time are given by the same mesh as in the first moving mesh scenario. The grid point distribution \eqref{GridpointDistribution} has also been used by  Klingenberg \cite{KlingenbergHJ2016HJ} et al. and Persson et al. \cite{Persson2009}. As an example, in Fig. \ref{figuremesh} we draw a typical mesh at $t=0$ and the deformed one at $t=1$.

\begin{figure}
\begin{center}
\includegraphics[width=2.5in]{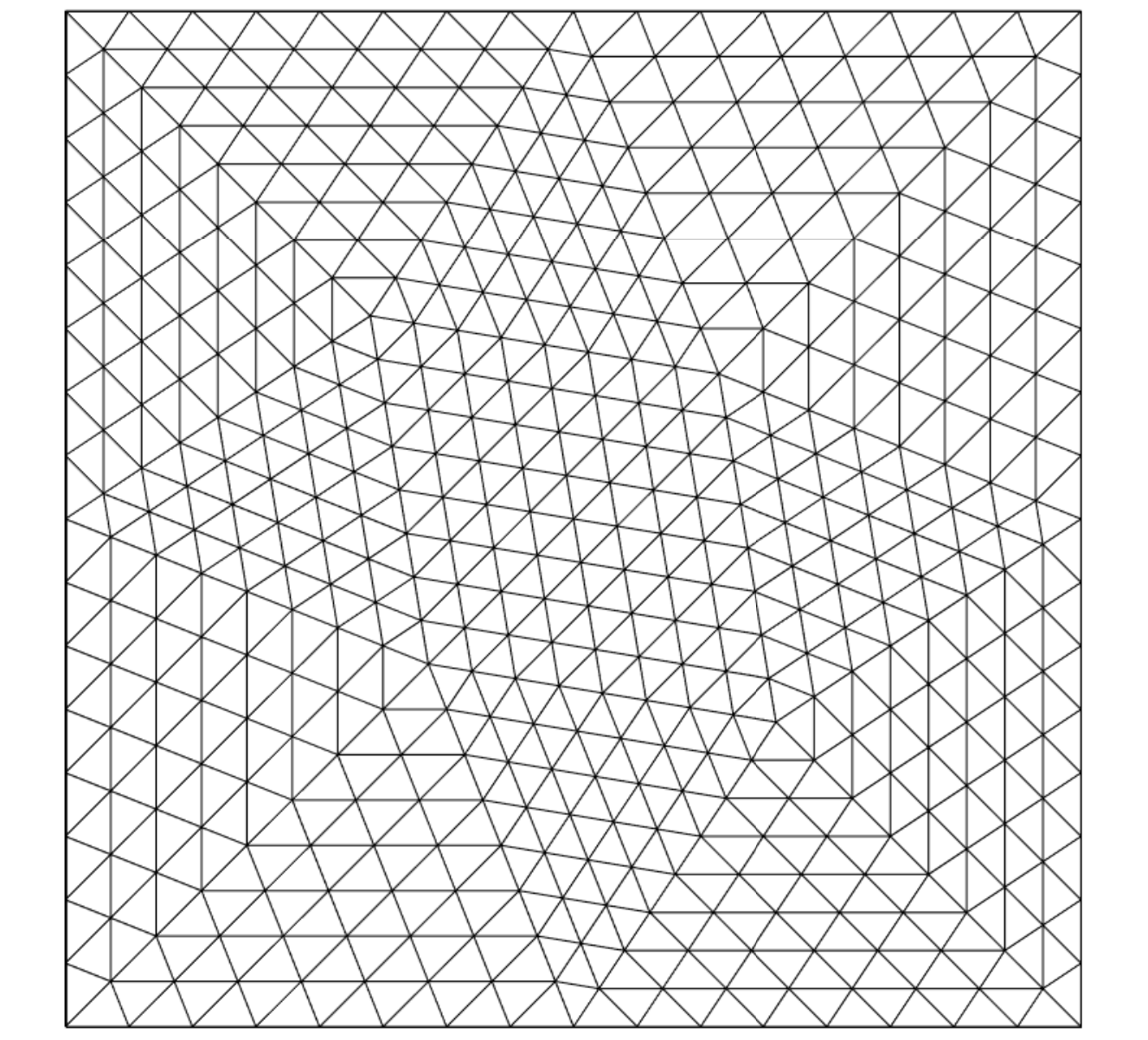}
\includegraphics[width=2.7in]{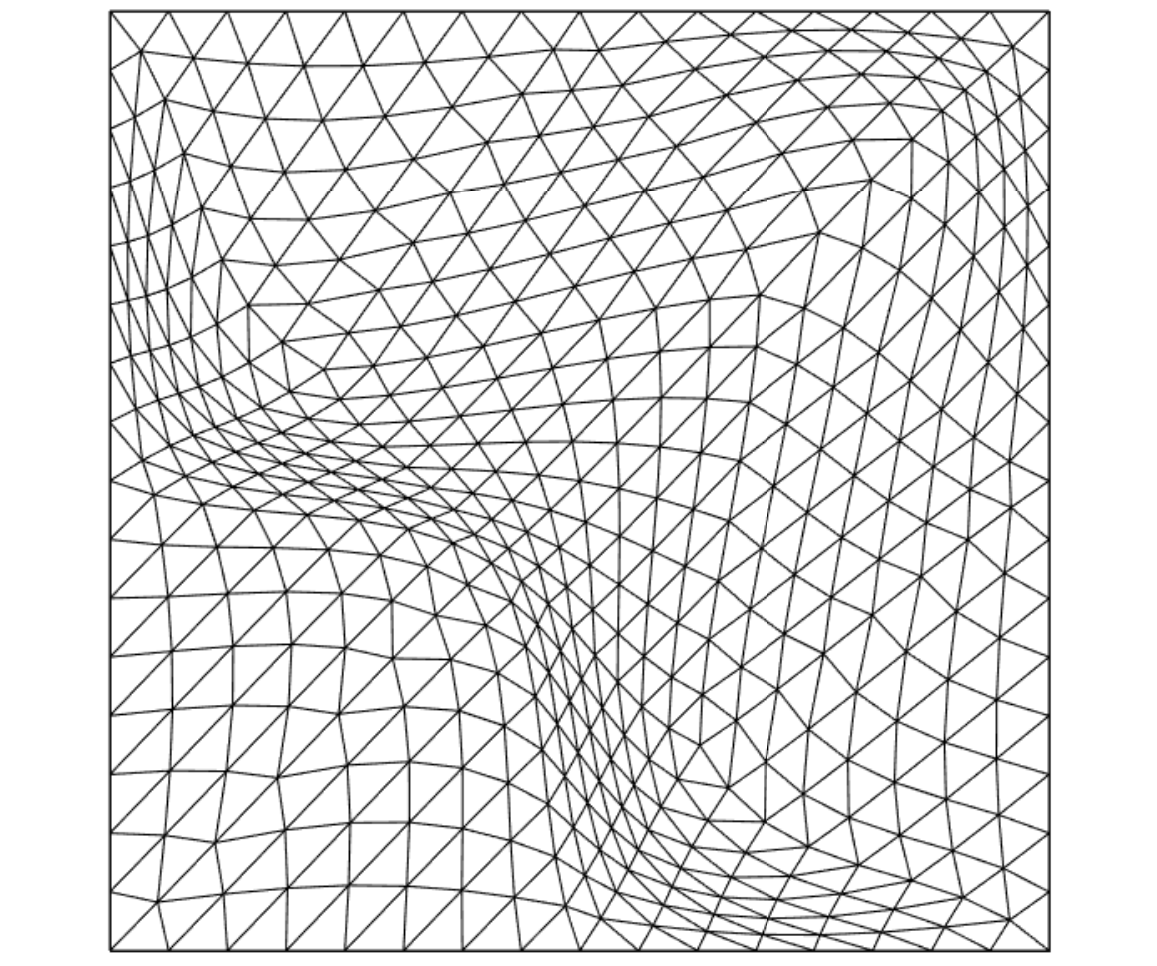}
\caption{The typical mesh at $t=0$ (left) and deformation mesh at $t=1$(right).}
\label{figuremesh}
\end{center}
\end{figure}

\paragraph{Example 5.1}\label{Linear}  (Linear advection equation)
Here, we test the linear equation
\begin{align}\label{LinearProblem}
&\partial_t u + \partial_x u+\partial_y u=0, \quad (x,y)\in [0,2]\times[0,2],
\end{align}
with the periodic boundary condition and the initial condition is taken to be $u_0(x,y)=1+0.5\sin(\pi(x+y))$. The exact solution is $u(x,y,t)=u_0(x-t,y-t)$ at time $t$.

The numerical solution on the static uniform grid is $u_h^S$ and the moving mesh solution is $u_h^M$. In Table \ref{LinearTable1}, we show the $\mathrm{L}^{2}$-errors and the rates of convergence of the numerical solutions $u_h^S$ and $u_h^M$  for the advection equation at time $t=1$. In the computation, we used piecewise $P^k$ polynomial spaces with $k=1,2,3$ on static and moving triangular meshes with cell size $h_0$. We observe that both $u_h^S$ and $u_h^M$  have the optimal accuracy when $P^k$ polynomial spaces with $k=1,2,3$ are applied, but they do not satisfy the discrete maximum principle. This was expected, since the bound preserving limiter in Section \ref{pplimiter} was not applied in the test case.

To verify the maximum principle, we run the same test case again, but this time with the bound preserving limiter. The results are given in Table \ref{Linearpptable}, where $\widetilde{u}_h^M$ is the ALE-DG solution on the moving mesh with bound preserving limiter. We observe that the $\mathrm{L}^{2}$-errors and the rates of convergence for the moving mesh solution $\widetilde{u}_h^M$ are not affected by the use of the bound preserving limiter and thus the method still has optimal accuracy. Furthermore, we  can see that the numerical solution $\widetilde{u}_h^M$ is limited in the same range $[0.5,1.5]$ as the initial data.

\begin{table}[!htb]
\caption{\label{LinearTable1} $\mathrm{L}^{2}$-errors and the rates of convergence for the linear advection equation \eqref{LinearProblem} at final time $t=1$ on static (right) and moving (left) triangular meshes  with cell size $h_0$. The bound preserving limiter is not applied.} \centering
\begin{scriptsize}
\begin{center}
\begin{tabular}{|c|c|cccc|cccc|}
  \hline
           &       && $u-u_h^M$     &&                  & & $u-u_h^S$                &  &    \\\cline{3-6}\cline{7-10}
           & $h_0$ &$ \mathrm{L}^{2}$ norm  & order      &$\min{(1.5-u_h^M)}$&$\min{(u_h^M-0.5)}$  &$ \mathrm{L}^{2}$ norm   & order&$\min{(1.5-u_h^S)}$&$\min{(u_h^S-0.5)}$\\\hline
$P^1$ &	1/2	&	1.30E-01	&	--	&	2.22E-02	&	6.11E-02	&	1.11E-01	&	--	&	2.39E-02	&	2.49E-02	\\
&	1/4	&	3.09E-02	&	2.07 	&	-7.38E-02	&	-8.71E-03	&	2.09E-02	&	2.41 	&	-3.50E-02	&	-1.89E-02	\\
&	1/8	&	6.77E-03	&	2.19 	&	0.00E+00	&	9.60E-05	&	4.43E-03	&	2.24 	&	-1.29E-02	&	-1.11E-02	\\
&	1/16	&	1.59E-03	&	2.09 	&	-7.27E-03	&	-4.64E-04	&	1.04E-03	&	2.09 	&	-3.47E-03	&	-3.41E-03	\\
&	1/32	&	3.88E-04	&	2.03 	&	-1.85E-03	&	-1.10E-04	&	2.54E-04	&	2.04 	&	-9.40E-04	&	-9.27E-04	\\
\hline
$P^2$ &	1/2	&	2.30E-02	&	--	&	-1.01E-01	&	-2.77E-02	&	1.79E-02	&	--	&	-7.32E-02	&	-4.33E-02	\\
&	1/4	&	4.88E-03	&	2.24 	&	-3.11E-02	&	-4.04E-03	&	3.00E-03	&	2.57 	&	-9.71E-03	&	-5.66E-03	\\
&	1/8	&	7.64E-04	&	2.68 	&	-4.24E-03	&	-1.10E-03	&	4.09E-04	&	2.88 	&	-8.10E-04	&	-1.06E-03	\\
&	1/16	&	1.03E-04	&	2.88 	&	-4.30E-04	&	-1.14E-04	&	5.12E-05	&	3.00 	&	-1.30E-04	&	-1.87E-04	\\
&	1/32	&	1.31E-05	&	2.98 	&	-6.00E-05	&	-2.80E-05	&	6.28E-06	&	3.03 	&	-2.00E-05	&	-2.50E-05	\\
\hline
$P^3$&	1/2	&	4.05E-03	&	--	&	-2.60E-03	&	-9.28E-04	&	2.01E-03	&	--	&	-5.50E-04	&	-2.20E-03	\\
&	1/4	&	3.12E-04	&	3.70 	&	-1.00E-05	&	-1.39E-04	&	1.30E-04	&	3.95 	&	-6.00E-05	&	-1.29E-04	\\
&	1/8	&	1.93E-05	&	4.02 	&	1.00E-05	&	-6.00E-06	&	7.85E-06	&	4.05 	&	0.00E+00	&	-9.00E-06	\\
&	1/16	&	1.22E-06	&	3.98 	&	0.00E+00	&	-1.00E-06	&	4.79E-07	&	4.03 	&	0.00E+00	&	0.00E+00	\\
&	1/32	&	7.71E-08	&	3.98 	&	0.00E+00	&	0.00E+00	&	2.96E-08	&	4.02 	&	0.00E+00	&	0.00E+00	\\
\hline
\end{tabular}
\end{center}
\end{scriptsize}
\end{table}

\begin{table}[!htb]
\caption{\label{Linearpptable} $\mathrm{L}^{2}$-errors and the rates of convergence for the moving mesh ALE-DG solution $\widetilde{u}_h^M$ with the bound preserving limiter at final time $t=1$ for the linear advection equation \eqref{LinearProblem} on moving triangular meshes with cell size $h_0$.} \centering
\begin{center}
\begin{tabular}{|c|c|cccc|}
  \hline
& $h_0$ &$ \norm{u-\widetilde{u}_h^M}$   & order    &$\min{(1.5-\widetilde{u}_h^M)}$   & $\min{(\widetilde{u}_h^M-0.5)}$\\\hline
$P^1$ &	1/2	&	1.36E-01	&	--	&	4.17E-02	&	7.82E-02	\\
&	1/4	&	3.31E-02	&	2.04 	&	0.00E+00	&	1.99E-03	\\
&	1/8	&	7.94E-03	&	2.06 	&	0.00E+00	&	5.05E-03	\\
&	1/16	&	1.84E-03	&	2.11 	&	0.00E+00	&	1.09E-03	\\
&	1/32	&	4.41E-04	&	2.06 	&	0.00E+00	&	2.72E-04	\\
\hline
$P^2$ &	1/2	&	6.26E-02	&	--	&	0.00E+00	&	2.20E-02	\\
&	1/4	&	1.07E-02	&	2.54 	&	0.00E+00	&	8.60E-04	\\
&	1/8	&	1.18E-03	&	3.19 	&	0.00E+00	&	3.64E-05	\\
&	1/16	&	1.23E-04	&	3.26 	&	0.00E+00	&	4.52E-06	\\
&	1/32	&	1.46E-05	&	3.08 	&	0.00E+00	&	2.13E-08	\\
\hline	
$P^3$ &	1/2	&	5.96E-03	&	--	&	0.00E+00	&	1.63E-03	\\
&	1/4	&	4.69E-04	&	3.67 	&	2.43E-04	&	1.98E-04	\\
&	1/8	&	3.02E-05	&	3.96 	&	4.66E-05	&	9.56E-06	\\
&	1/16	&	1.76E-06	&	4.10 	&	1.47E-06	&	1.62E-06	\\
&	1/32	&	1.01E-07	&	4.13 	&	0.00E+00	&	0.00E+00	\\
\hline
\end{tabular}
\end{center}
\end{table}

\paragraph{Example 5.2}  (Burgers' equation)
Next, we investigate the Burgers' equation
\begin{align}\label{BurgersProblem}
&\partial_t u + \partial_x \bigg(\frac{u^2}{2}\bigg)+\partial_y \bigg(\frac{u^2}{2}\bigg)=0, \quad (x,y)\in [0,2]\times[0,2].
\end{align}
In our simulation, the periodic boundary condition is used and the initial condition  is also taken to be $u_0(x,y)=1+0.5\sin(\pi(x+y))$. We compute this example up to time $t=0.1$ before the shock front has been developed in the numerical solution.

In Table \ref{BurgesTable1}, the $\mathrm{L}^{2}$-errors and the rates of convergence for the numerical solutions $u_h^S$ and $u_h^M$, $\tilde{u}_h^M$ are presented. These functions are computed by the ALE-DG method with $P^k$, $k=1,2,3$, polynomial spaces. As in the previous example $u^S_h$ and $u^M_h$ are the numerical solutions of the ALE-DG method on the static uniform mesh and on the moving mesh. The solution $\tilde{u}_h^M$ is the moving mesh ALE-DG solution revised by the bound preserving limiter. We observe that the optimal accuracy is obtained for $u_h^M,\widetilde{u}_h^M$ and $u_h^S$ in both moving mesh scenarios. Furthermore, maximum and minimum values of $\widetilde{u}_h^M$ are limited in the same range $[0.5,1.5]$  as the initial data when the bound preserving limiter is applied in the ALE-DG method on the moving grid.
\begin{table}[!htb]
\caption{\label{BurgesTable1} $\mathrm{L}^{2}$-errors and rates of convergence for Burgers' equation \eqref{BurgersProblem} at final time $t=0.1$ on static (left) and moving (center) triangular meshes with cell size $h_0$. On the right the $\mathrm{L}^{2}$-errors, rates of convergence and bounds for the ALE-DG solution revised by the bound preserving limiter.} \centering
\begin{scriptsize}
\begin{center}
\begin{tabular}{|c|c|cc|cc|cccc|}
  \hline
& $h_0$ &$\norm{u-u_h^S}$& order &$ \norm{u-u_h^M}$   & order      &$ \norm{u-\widetilde{u}_h^M}$   & order&$\min{(1.5-\widetilde{u}_h^M)}$&$\min{(\widetilde{u}_h^M-0.5)}$\\\hline
$P^1$ &	 1/2	&	6.15E-02	&	--	&	6.21E-02	&	--	&	6.18E-02	&	--	&	0.00E+00	&	0.00E+00	\\
&	 1/4	&	1.78E-02	&	1.79 	&	1.65E-02	&	1.91 	&	1.58E-02	&	1.97 	&	0.00E+00	&	0.00E+00	\\
&	 1/8	&	4.18E-03	&	2.09 	&	3.89E-03	&	2.09 	&	3.87E-03	&	2.03 	&	0.00E+00	&	0.00E+00	\\
&	  1/16	&	1.02E-03	&	2.04 	&	9.44E-04	&	2.04 	&	9.82E-04	&	1.98 	&	0.00E+00	&	0.00E+00	\\
&	  1/32	&	2.49E-04	&	2.03 	&	2.31E-04	&	2.03 	&	2.40E-04	&	2.03 	&	0.00E+00	&	0.00E+00	\\
\hline
$P^2$&	 1/2	&	2.54E-02	&	--	&	2.54E-02	&	--	&	4.71E-02	&	--	&	0.00E+00	&	0.00E+00	\\
&	 1/4	&	4.16E-03	&	2.61 	&	4.10E-03	&	2.63 	&	1.23E-02	&	1.93 	&	0.00E+00	&	0.00E+00	\\
&	 1/8	&	7.02E-04	&	2.57 	&	6.72E-04	&	2.61 	&	8.18E-04	&	3.91 	&	0.00E+00	&	0.00E+00	\\
&	  1/16	&	1.14E-04	&	2.62 	&	1.08E-04	&	2.64 	&	1.10E-04	&	2.90 	&	0.00E+00	&	0.00E+00	\\
&	  1/32	&	1.66E-05	&	2.78 	&	1.59E-05	&	2.77 	&	1.59E-05	&	2.78 	&	0.00E+00	&	0.00E+00	\\
\hline
$P^3$&	 1/2	&	7.70E-03	&	--	&	7.70E-03	&	--	&	1.22E-02	&	--	&	0.00E+00	&	0.00E+00	\\
&	 1/4	&	8.82E-04	&	3.12 	&	9.17E-04	&	3.07 	&	1.07E-03	&	3.51 	&	0.00E+00	&	0.00E+00	\\
&	 1/8	&	6.44E-05	&	3.78 	&	6.15E-05	&	3.90 	&	6.35E-05	&	4.08 	&	0.00E+00	&	0.00E+00	\\
&	  1/16	&	4.18E-06	&	3.95 	&	3.93E-06	&	3.97 	&	4.02E-06	&	3.98 	&	6.08E-07	&	9.90E-09	\\
&	  1/32	&	2.72E-07	&	3.94 	&	2.55E-07	&	3.95 	&	2.59E-07	&	3.96 	&	6.36E-08	&	1.48E-08	\\
\hline
\end{tabular}
\end{center}
\end{scriptsize}
\end{table}

To show that our proposed ALE-DG methods can handle the problem with shocks, we show the numerical solutions $u_h^S$ and $u_h^M$ of Burgers' equation at time $t=0.45$ with piecewise $P^1$ polynomial approximation in Fig. \ref{Burges_shock}. Here, we use the slope limiter developed by Cockburn et al. in \cite{Cockburn1998}. From the results, it can be seen that the ALE-DG methods can capture the shocks well for the Burgers' equation on both static and moving meshes.

\begin{figure}
\begin{center}
\includegraphics[width=2.5in]{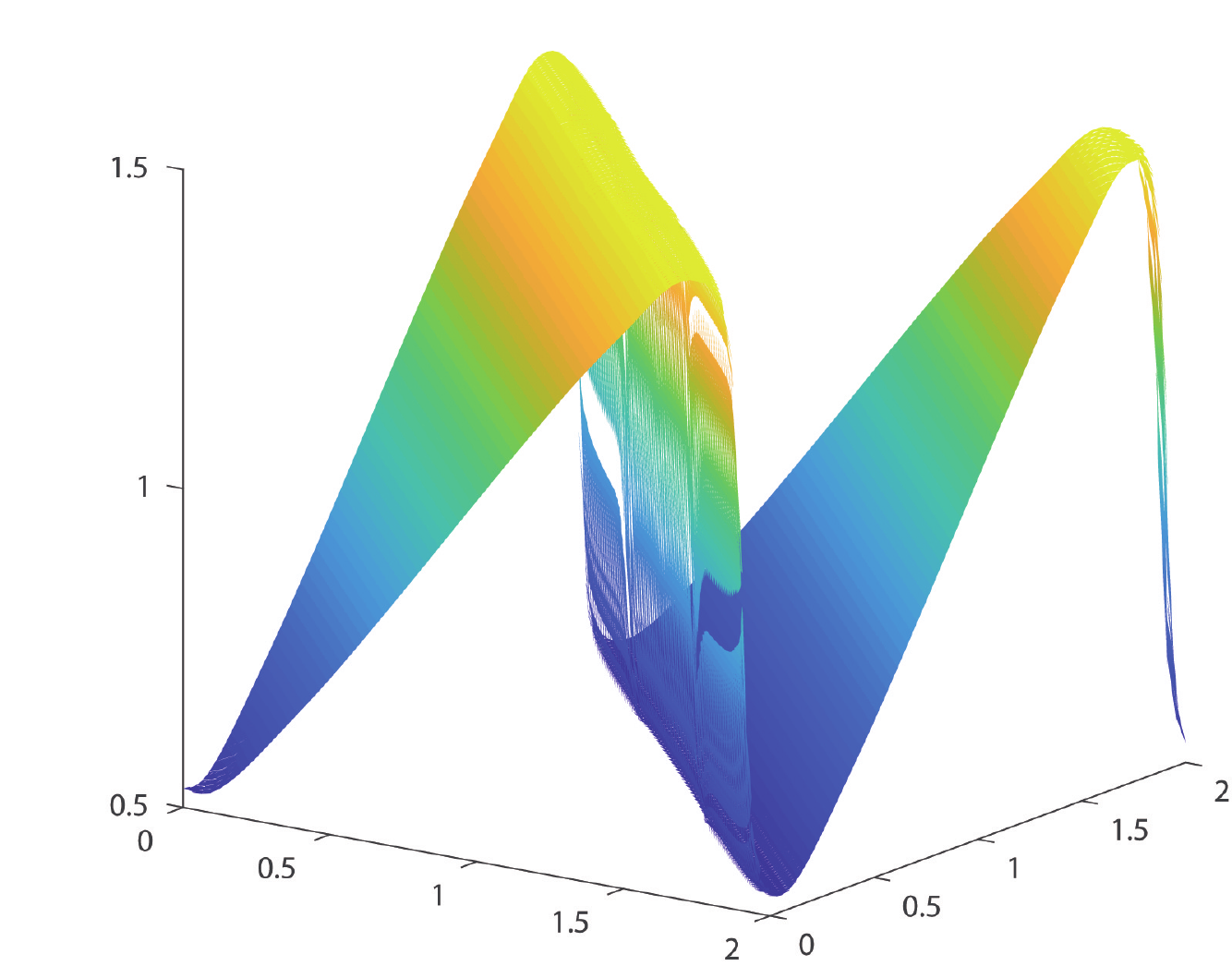}
\includegraphics[width=2.5in]{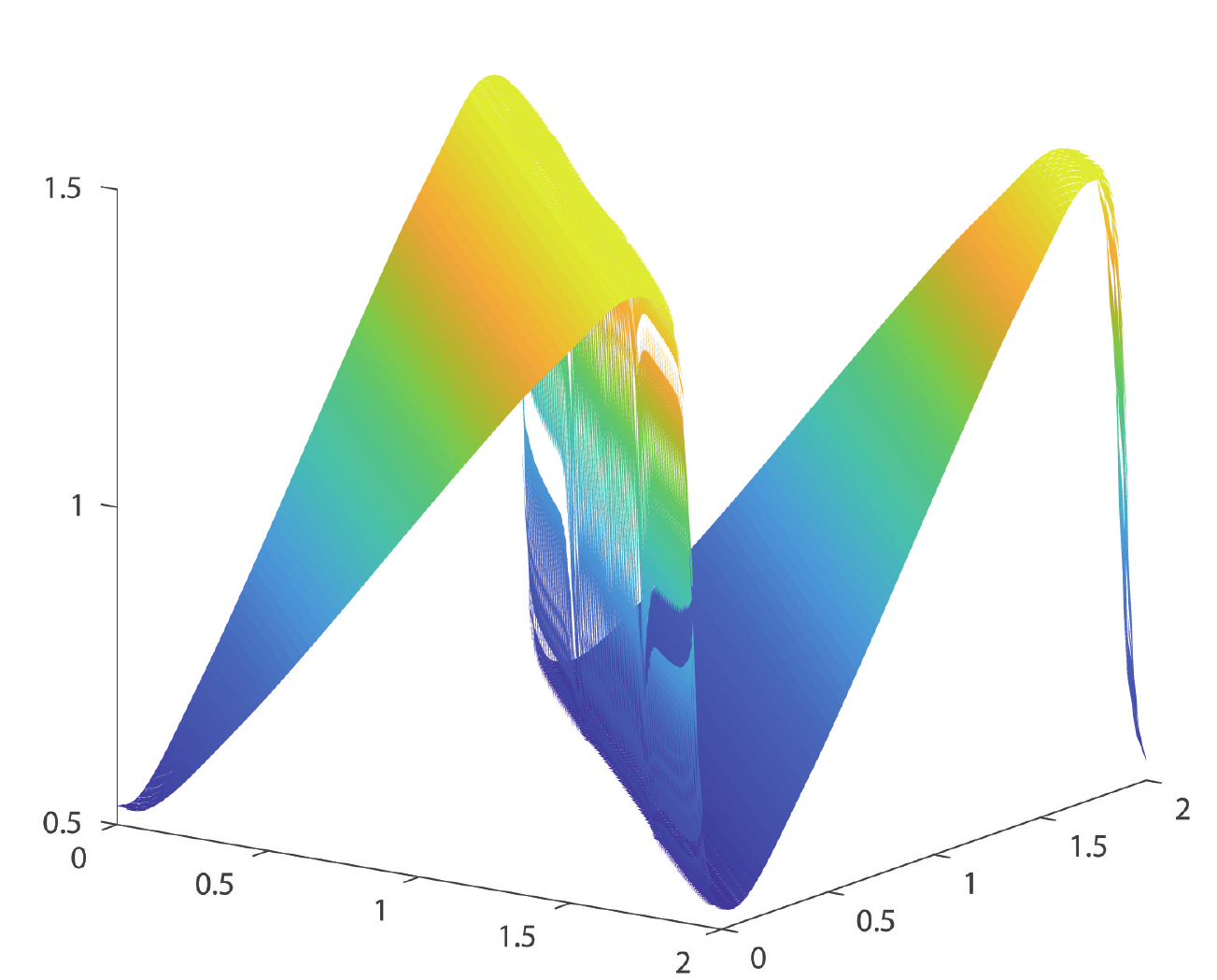}
\caption{The ALE-DG solutions $u_h^S$ (left) and $u_h^M$ (right) at time $t=0.45$ with piecewise $P^1$ polynomial for Burgers' equation.}
\label{Burges_shock}
\end{center}
\end{figure}

\paragraph{Example 5.3}  (Compressible Euler equations)
We consider the two dimensional compressible Euler equations of gas dynamics for a polytropic gas
\begin{align}\label{Euler}
\partial_{t}\mathbf{U}+\nabla\cdot\mathbf{F(U)}=0, \quad (x,y)\in [x_l,x_r]\times[y_l,y_r]\subset\R^2,
\end{align}
with
\begin{align}
\mathbf{U}=(\rho,\rho u,\rho v,E)^{T},\quad
\mathbf{F(U)}=\left[\rho{\bf u},\rho\mathbf{u}\otimes\mathbf{u}+p{\bf I},\left(E+p\right)\mathbf{u}\right]^{T}.
\end{align}
Here, $\rho$ is the density, $\mathbf{u}=\left(u,v\right)^{T}$ is the velocity field and $E$ is the total energy. Moreover, the adiabatic constant of air  $\gamma=1.4$ is used, the pressure is given by $p=(\gamma-1)\left(E-\frac{1}{2}\rho\abs{{\bf u}}^2\right)$ and ${\bf I}$ is the identity matrix.
In our simulation, we test a plain wave problem and a smooth vortex problem.

First, we consider the plain wave problem and choose the domain related parameter in \eqref{Euler} as $x_l=y_l=0$ and $x_r=y_r=2$. The problem has the initial data
\begin{align}
(\rho, u, v, p)^T=(1+0.5\sin(\pi(x+y)),1,1,1)^T
\end{align}
and is investigated with the periodic boundary condition. The results in Table \ref{EulerLinearTable} show the $\mathrm{L}^{2}$-errors and the rates of convergence of the density $\rho_h$ given by the ALE-DG method with $P^k$, $k=1,2,3$, polynomial spaces. The numerical solutions of the ALE-DG method on the static uniform mesh and the moving mesh are $\rho^S_h$ and $\rho^M_h$. The numerical results show that we can obtain the optimal accuracy on both meshes.

\begin{table}[!htb]
\caption{\label{EulerLinearTable}   $\mathrm{L}^{2}$-errors and rates of convergence at final time $t=1$ for the Euler plain wave problem on static (right) and moving (left) triangular meshes with cell size $h_0$.} \centering
\begin{center}
\begin{tabular}{|c|c|cc|cc|}
  \hline
           &       & $\rho-\rho_h^M$              &             & $\rho-\rho_h^S$     &    \\\cline{3-4}\cline{5-6}
           & $h_0$ &$ \mathrm{L}^{2}$-norm   & order    &$ \mathrm{L}^{2}$-norm   & order\\\hline
$P^1$  &	1/2	&	1.35E-01	&	--	&	1.15E-01	&	--	\\
&	1/4	&	3.04E-02	&	2.15 	&	1.81E-02	&	2.67 	\\
&	1/8	&	6.06E-03	&	2.32 	&	3.49E-03	&	2.37 	\\
&	1/16	&	1.40E-03	&	2.11 	&	8.02E-04	&	2.12 	\\
&	1/32	&	3.41E-04	&	2.04 	&	1.93E-04	&	2.05 	\\
\hline
$P^2$ &	1/2	&	2.64E-02	&	--	&	2.23E-02	&	--	\\
&	1/4	&	6.35E-03	&	2.06 	&	4.58E-03	&	2.28 	\\
&	1/8	&	1.08E-03	&	2.56 	&	6.74E-04	&	2.76 	\\
&	1/16	&	1.55E-04	&	2.79 	&	8.62E-05	&	2.97 	\\
&	1/32	&	2.04E-05	&	2.93 	&	1.04E-05	&	3.05 	\\
\hline	
$P^3$ &	1/2	&	4.75E-03	&	--	&	2.37E-03	&	--	\\
&	1/4	&	3.44E-04	&	3.79 	&	1.37E-04	&	4.11 	\\
&	1/8	&	2.02E-05	&	4.09 	&	8.05E-06	&	4.09 	\\
&	1/16	&	1.34E-06	&	3.92 	&	4.92E-07	&	4.03 	\\
&	1/32	&	8.78E-08	&	3.93 	&	3.05E-08	&	4.01 	\\
\hline
\end{tabular}
\end{center}
\end{table}

Next, we consider the smooth vortex problem and choose the domain related parameter in \eqref{Euler} as $x_l=y_l=0$, $x_r=20$ and $y_r=15$. This problem was also presented by Persson et al. \cite{Persson2009} and the initial condition is
\begin{align*}
&\rho= \rho_0(1-\alpha e^{r})^{\frac{1}{\gamma-1}}, \quad  \;p= p_0(1-\alpha e^{r})^{\frac{\gamma}{\gamma-1}},\\
&{\bf u}=(u,v)^{T}=\left(u_{0}\cos(\theta),v_{0}\sin(\theta)\right)^{T}+\frac{\epsilon}{2\pi r_{0}}e^{0.5r}\left(-u_{0}(y-y_{0}),v_{0}(x-x_{0})\right)^{T},
\end{align*}
where $(\rho_0,u_0,v_0,p_0)^T=(1,1,1,1)^T$, $\theta=\arctan(0.5)$, $(x_0,y_0)=(5,5)$, $\epsilon=0.3$, $r_0=1.5$, $r=(1-(x-x_0)^2-(y-y_0)^2)/r_0^2$ and $\alpha=\frac{(\gamma-1)\epsilon^2}{8\gamma\pi^2}$. We test the problem up to time $t=\sqrt{10^2+5^2}$ with the Direchlet boundary condition. The ALE-DG method with {$P^k,k=1,2,3$} approxiation is used  to solve the problem  on static uniform triangular meshes with cell size $h_0$ and on moving meshes with the grid point distribution \eqref{GridpointDistribution} as before. The $\mathrm{L}^{2}$-errors and the rates of convergence for the numerical solutions of the density $\rho_h^S,\rho_h^M$ and the pressure $p_h^M$, $p_h^S$ are shown in Table \ref{EulerVortexTable}. We see that the numerical solutions are optimally accurate in both moving mesh scenarios.

\begin{table}[!htb]
\caption{\label{EulerVortexTable} $\mathrm{L}^{2}$-errors and the rates of convergence for the density $\rho$ and the pressure $p$ at final time $t=\sqrt{10^2+5^2}$ for the Euler vortex problem on static (right) and moving (left) triangular meshes with cell size $h_0$.} \centering
\begin{center}
\begin{tabular}{|c|c|cccc|cccc|}
  \hline
           & $h_0$ &$ \norm{\rho-\rho_h^M}$   & order      &$ \norm{p-p_h^M}$ &order&$ \norm{\rho-\rho_h^S}$  & order&$ \norm{p-p_h^S}$&order\\\hline
$P^1$ &	l/2	&	1.35E-03	&	--	&	1.90E-03	&	--	&	1.29E-03	&	--	&	1.81E-03	&	--	\\
&	l/4	&	2.84E-04	&	2.25 	&	3.98E-04	&	2.25 	&	2.64E-04	&	2.29 	&	3.72E-04	&	2.28 	\\
&	l/8	&	5.60E-05	&	2.34 	&	7.83E-05	&	2.35 	&	5.19E-05	&	2.35 	&	7.28E-05	&	2.35 	\\
&	l/16	&	1.22E-05	&	2.19 	&	1.71E-05	&	2.19 	&	1.16E-05	&	2.16 	&	1.63E-05	&	2.16 	\\
&	l/32	&	2.81E-06	&	2.12 	&	3.94E-06	&	2.12 	&	2.71E-06	&	2.10 	&	3.80E-06	&	2.10 	\\
\hline
$P^2$ &	l/2	&	4.75E-04	&	--	&	6.59E-04	&	--	&	4.32E-04	&	--	&	6.00E-04	&	--	\\
&	l/4	&	6.49E-05	&	2.87 	&	9.11E-05	&	2.85 	&	6.34E-05	&	2.77 	&	8.89E-05	&	2.75 	\\
&	l/8	&	1.16E-05	&	2.48 	&	1.63E-05	&	2.48 	&	9.43E-06	&	2.75 	&	1.32E-05	&	2.75 	\\
&	l/16	&	1.85E-06	&	2.65 	&	2.59E-06	&	2.65 	&	1.27E-06	&	2.90 	&	1.78E-06	&	2.90 	\\
&	l/32	&	2.92E-07	&	2.66 	&	4.09E-07	&	2.66 	&	1.77E-07	&	2.84 	&	2.48E-07	&	2.84 	\\
\hline
$P^3$ &	l/2	&	1.26E-04	&	--	&	1.75E-04	&	--	&	1.16E-04	&	--	&	1.61E-04	&	--	\\
&	l/4	&	6.72E-06	&	4.23 	&	9.36E-06	&	4.23 	&	5.50E-06	&	4.40 	&	7.68E-06	&	4.39 	\\
&	l/8	&	3.24E-07	&	4.37 	&	4.52E-07	&	4.37 	&	2.20E-07	&	4.64 	&	3.07E-07	&	4.64 	\\
&	l/16	&	1.54E-08	&	4.40 	&	2.14E-08	&	4.40 	&	1.09E-08	&	4.34 	&	1.51E-08	&	4.35 	\\
&	l/32	&	8.28E-10	&	4.22 	&	1.15E-09	&	4.22 	&	5.63E-10	&	4.27 	&	7.84E-10	&	4.27 	\\
\hline
\end{tabular}
\end{center}
\end{table}

\paragraph{Example 5.4}  (Constant state preservation)
The previous examples show that the ALE-DG method on moving meshes maintains the high order accuracy as the DG method on static meshes. The ability of the ALE-DG method to preserve constant states needs to be investigated, too. For this reason the linear advection equation \eqref{LinearProblem}  and the Burgers' equation \eqref{BurgersProblem} are considered with the constant initial condition $u_{0}=1$. We solve these initial value problems with the ALE-DG method on moving triangular meshes with the grid point distribution \eqref{GridpointDistribution}. In Table \ref{GCLtable} the results of the computations are listed and it can be seen that the ALE-DG method numerically satisfies the GCL. This result was expected, since we used a time discretization with an order greater than two and in the Section \ref{Section:GCL} it has been proven that a time discretization of this type is enough to ensure that the method preserves constant states.

\begin{table}[!htb]
\caption{\label{GCLtable} $\mathrm{L}^{2}$-errors for the advection equation and Burgers' equation at time $t=1$ with constant initial condition $u_0=1$ on moving triangular meshes with the grid point distribution \eqref{GridpointDistribution} and cell size $h_0$.} \centering
\begin{center}
\begin{tabular}{|c|ccc|ccc|}
  \hline
        & Advection equation &$u-u_h^M$&                  &  Burgers' equation &$u-u_h^M$                &    \\\cline{2-4}\cline{5-7}
  $h_0$ &$ P^1$   & $ P^2$      &$ P^3$    &$ P^1$     & $ P^2$       &$ P^3$\\\hline
1/2	&	5.71E-16	&	3.72E-15	&	8.65E-15	&	3.03E-16	&	2.57E-15	&	7.35E-15	\\
1/4	&	7.89E-16	&	7.42E-15	&	1.99E-14	&	5.20E-16	&	5.93E-15	&	1.56E-14	\\
1/8	&	2.27E-15	&	1.24E-14	&	3.86E-14	&	1.13E-15	&	8.86E-15	&	2.89E-14	\\
1/16	&	4.21E-15	&	2.47E-14	&	7.88E-14	&	2.44E-15	&	1.75E-14	&	5.86E-14	\\
1/32	&	9.11E-15	&	5.39E-14	&	1.67E-13	&	5.06E-15	&	3.56E-14	&	1.19E-13	\\
\hline
\end{tabular}
\end{center}
\end{table}

To further show the D-GCL of ALE-DG methods, we adopted the meshes $\mathcal{T}_1$ and $\mathcal{T}_2$ of $32$ cells in  Fig. \ref{Mesh2} with recursive refinement as the initial and final meshes. For the linear advection equation with the constant initial condition $u_0(x) = 1$, we show the D-GCL errors at time $T=1.0$ in Table \ref{GCL3} by forward Euler, TVD-RK2 and TVD-RK3 methods respectively. Here, $P^1$ piecewise polynomial space is used in the ALE-DG method. We take time step size $\triangle t=\frac{h_0}{\max(|\boldsymbol{\omega}|)}$ with $\boldsymbol{\omega}=(\frac{(\mathbf{x_1}-\mathbf{x_2})}{T},\frac{(\mathbf{y_1}-\mathbf{y_2})}{T})$ and $(\mathbf{x_1},\mathbf{y_1}),(\mathbf{x_2},\mathbf{y_2})$ are vertices of meshes $\mathcal{T}_1$ and $\mathcal{T}_2$. The numerical results are consistent with the analysis on the D-GCL of ALE-DG methods.

%

\begin{table}[!htb]
\caption{\label{GCL3} $\mathrm{L}^{\infty}$-errors and $\mathrm{L}^{2}$-errors for the advection equation at $t=1.0$ with constant initial condition $u_0=1$ on moving triangular meshes with the grid point distribution in Fig. \ref{Mesh2}.} \centering
\begin{center}
\begin{tabular}{|c|cc|cc|cc|}
  \hline
        &Forward Euler&               &  TVD-RK2&  &  TVD-RK3       &    \\\cline{2-3}\cline{4-5}\cline{6-7}
  $N$ &$\mathrm{L}^{\infty}$ & $\mathrm{L}^{2}$  &$\mathrm{L}^{\infty}$  &$\mathrm{L}^{2}$   & $\mathrm{L}^{\infty}$ &$\mathrm{L}^{2}$\\\hline
32	&	1.40E-02	&	8.79E-03	&	3.55E-15	&	1.74E-15	&	1.47E-14	&	8.21E-15	\\
128	&	6.21E-03	&	3.09E-03	&	1.78E-15	&	7.29E-16	&	1.35E-14	&	9.11E-15	\\
512	&	2.72E-03	&	1.30E-03	&	2.33E-15	&	4.65E-17	&	3.57E-12	&	1.26E-12	\\
2048	&	1.34E-03	&	6.22E-04	&	1.03E-11	&	2.21E-12	&	2.39E-11	&	3.39E-12	\\
8192	&	6.67E-04	&	3.07E-04	&	3.06E-11	&	1.00E-11	&	8.45E-11	&	1.77E-11	\\
\hline
\end{tabular}
\end{center}
\end{table}

\begin{figure}
\begin{center}
\includegraphics[width=2.7in]{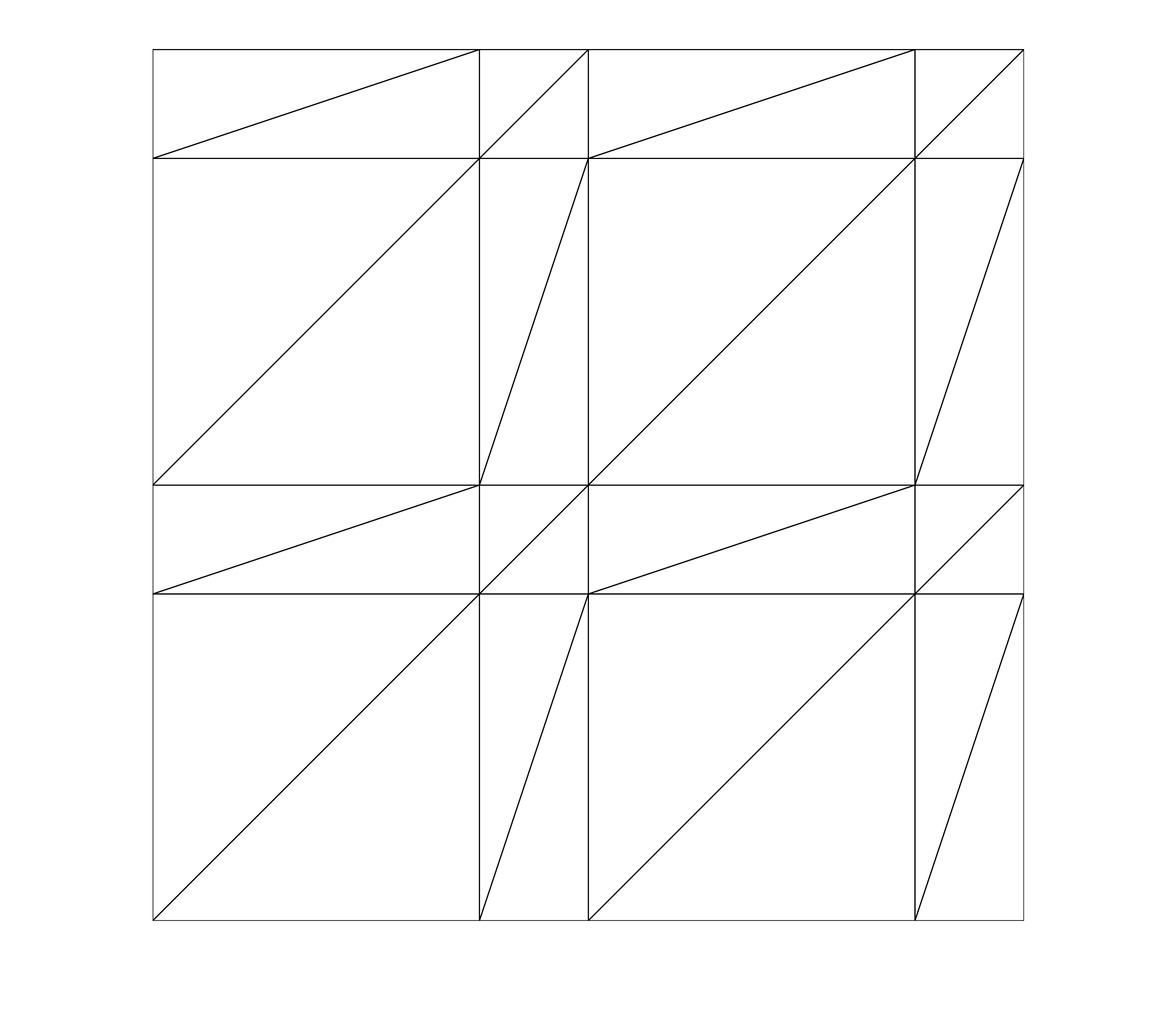}
\includegraphics[width=2.7in]{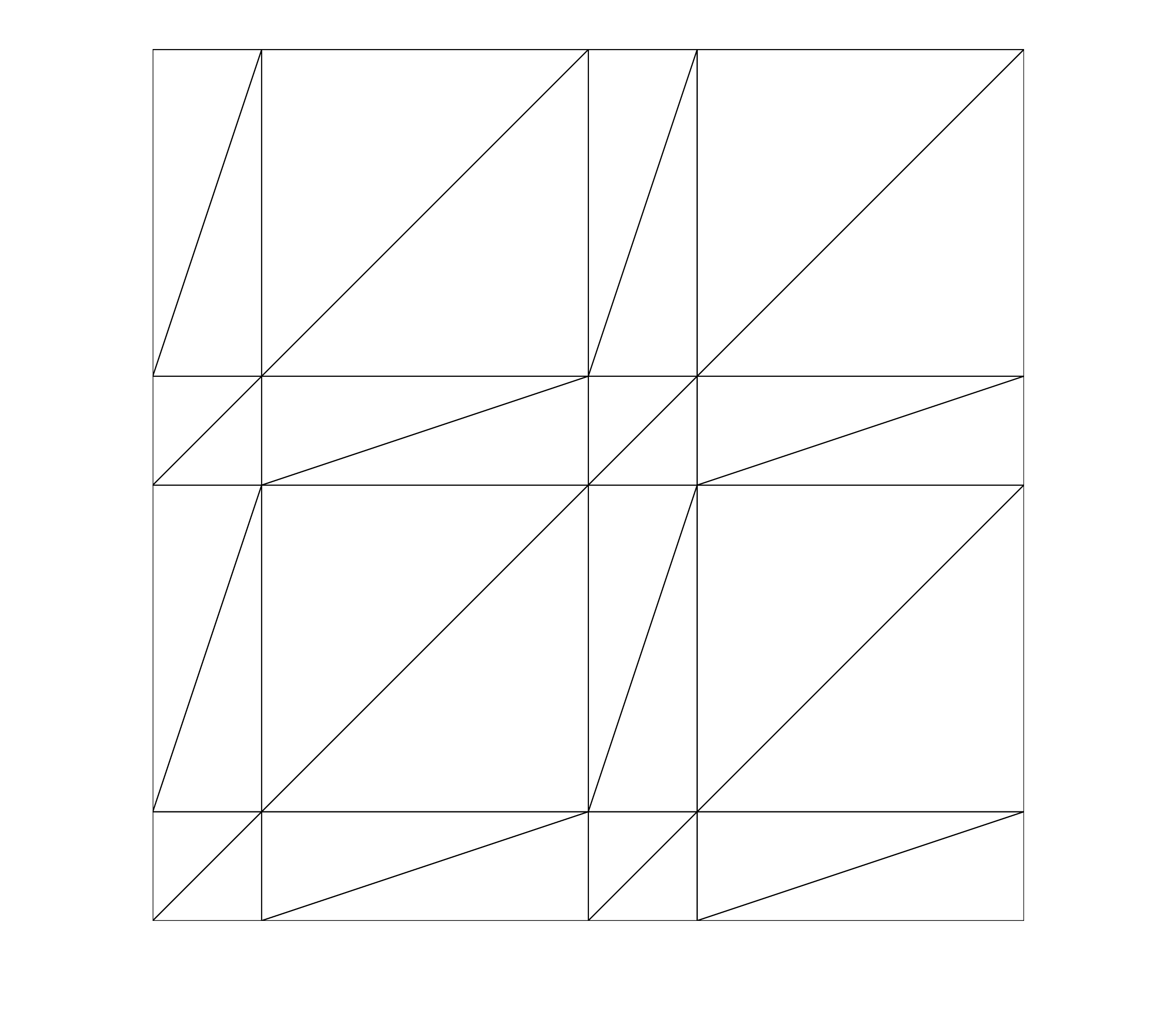}
\caption{The meshes with $32$ cells are used in test of D-GCL for linear advection equation. Left: $\mathcal{T}_1$; Right: $\mathcal{T}_2$. }
\label{Mesh2}
\end{center}
\end{figure}
\section{Conclusions}
In this paper, an ALE-DG method to solve conservation laws in several space dimensions on moving simplex meshes has been developed and analyzed. We began the paper with an analysis of the semi-discrete ALE-DG method and proved the $\mathrm{L}^{2}$-stability. Moreover, we presented a suboptimal a priori error estimate with respect to the $\mathrm{L}^{\infty}\left(0,T;\mathrm{L}^{2}\left(\Omega\right)\right)$-norm, where the suboptimality refers to the approximation properties of the discrete space.

Afterward, the fully-discrete ALE-DG method was investigated. In the context of Total-variation-diminishing Runge-Kutta methods, a relationship between the spatial dimension and the discrete geometric conservation law was elaborated. 
Furthermore, in two dimensions, second and third order fully-discrete ALE-DG methods were presented. We proved that these methods satisfy the maximum principle when the bound preserving limiter developed by Zhang, Xia and Shu \cite{Zhang2012} is applied. In a future work, it would be worthwhile to investigate if these methods are positive preserving when they are applied to the compressible Euler equations.

Beside our theoretical investigations, several numerical test examples for two moving mesh scenarios have been presented. These examples support our theoretical results and show that the ALE-DG method is numerically stable and uniformly high order accurate. In particular, the two test examples for the compressible Euler equations support the expectation that the ALE-DG method can be also applied to systems of conservation laws even when the development and analysis in this paper has been focused on scalar conservation laws in several space dimensions.

It should be mentioned that we did not use a moving mesh methodology in the numerical examples. The grid point distribution was specified for the calculations. The development of a suitable moving mesh methodology for our ALE-DG method is also a project for a future work.


\end{document}